\theoremstyle{definition}
\newtheorem{theorem}{Theorem}[section]
\newtheorem{theoremletters}{Theorem}
\theoremstyle{definition}
\newtheorem{lemma}[theorem]{Lemma}
\theoremstyle{definition}
\newtheorem{corollary}[theorem]{Corollary}
\theoremstyle{definition}
\newtheorem{proposition}[theorem]{Proposition}
\theoremstyle{definition}
\theoremstyle{definition}
\newtheorem{remark}[theorem]{Remark}
\theoremstyle{definition}
\theoremstyle{definition}
\numberwithin{equation}{section}
\numberwithin{equation}{section}
\DeclareMathOperator{\supp}{supp}
\newcommand{\norm}[1]{\lVert#1\rVert}
\newcommand{\vertiii}[1]{{\left\vert\kern-0.25ex\left\vert\kern-0.25ex\left\vert #1 
    \right\vert\kern-0.25ex\right\vert\kern-0.25ex\right\vert}}
\newcommand{\abs}[1]{\left\lvert#1\right\rvert}
\newcommand{\distE}[1]{\emph{d}_X(#1)}
\newcommand{\distC}[1]{\emph{d}_{CX}(#1)}
\def\author@andify{
	\nxandlist {\unskip ,\penalty-1 \space\ignorespaces}
	{\unskip {} \@@and~}
	{\unskip \penalty-2 \space \@@and~}
}
\begin{document}
\title{Quotients, $\ell_\infty$ and abstract Ces\`aro spaces}

\author{Tomasz Kiwerski}
\address[Tomasz Kiwerski]{Institute of Mathematics, Poznan University of Technology,
	Piotrowo 3A, 60-965 Pozna\'{n}, Poland}
\email{tomasz.kiwerski@gmail.com}

\author{Pawe\l {} Kolwicz}
\address[Pawe\l {} Kolwicz]{Institute of Mathematics, Poznan University of Technology,
	Piotrowo 3A, 60-965 Pozna\'{n}, Poland}
\email{pawel.kolwicz@put.poznan.pl}

\author{Jakub Tomaszewski}
\address[Jakub Tomaszewski]{Institute of Mathematics, Poznan University of Technology,
	Piotrowo 3A, 60-965 Pozna\'{n}, Poland}
\email{jakub.tomaszewski42@gmail.com}

\maketitle

\begin{abstract}
	Investigating some re-arrangement properties of the norm in the quotient spaces $X/X_a$ we determine
	the properties of the spaces $X$ guaranteeing the existence of a lattice isometric copy of $\ell_\infty$ in
	the abstract Ces\`aro spaces $CX$.
\end{abstract}

\bigskip

\begin{flushright}
	{\it Dedicated to the memory of} \\
	{\it Professor Henryk Hudzik} \\
	{\it (1945--2019)}
\end{flushright}

\bigskip

\renewcommand{\thefootnote}{\fnsymbol{footnote}} \footnotetext[0]{
	{\it Date:} \today.
	
	2020 \textit{Mathematics Subject Classification:} Primary 46E30; Secondary 46B20, 46B42.
	
	\textit{Key words and phrases:} Banach lattices; Banach ideal spaces; rearrangement invariant (symmetric) spaces; Ces\`aro spaces;
	Ces\`aro (Hardy) operator; lattice isometric copies of $\ell_\infty$; order ideals; order continuity; quotients.
	
	Research (all authors) supported by Ministry of Science and Higher Education of Poland, grant number 0213/SIGR/2154.}

	\section{Introduction and preliminaries}
	
	Let ${\mathscr P}$ be a property for which it makes sense to aks whether a Banach space has this property or not. For a Banach ideal space
	$X$ we can now pose the following problem: what condition, say $\varepsilon$, should we impose on the space $X$ so that the implication
	\begin{equation*} \tag{$\Delta$}
		X \in ({\mathscr P}) + \varepsilon \Longrightarrow CX \in ({\mathscr P})
	\end{equation*}
	holds true? Here $CX$ stands for the Ces\`aro space which can be viewed, for example, as an optimal domain for the Hardy operator
	$C \colon f \mapsto C(f)(x) \coloneqq \frac{1}{x}\int_0^x f(t) dt$,
	that is, the biggest in the sense of inclusion Banach ideal space such that the operator $C$ with fixed codomain space $X$ is still bounded.
	We should probably point out that there is really no reason (maybe, beyond some presonal interests) to restrict this question to
	the space $CX$, because the Ces\`aro spaces themselves constitute only a special, but important, subclass of a very broad family of
	{\it spaces generated via sublinear operators} (see \cite{Ast12}, \cite{CR06}, \cite{CR16}, \cite{DS07}, \cite{HW06a}, \cite{HW06b} and \cite{Mas91});
	this class of spaces includes, for example, K\"othe--Bochner spaces, real interpolation spaces, extrapolation spaces, Besov spaces, Triebel--Sobolev
	spaces and some other optimal domains associated with the kernel operators (like multiplication operator, Volterra operator, Ces\`aro operator,
	Copson operator, Poisson operator or Riemann--Liouville operator), differential operators, convolutions, the Fourier transform, the (finite)
	Hilbert transform and the Sobolev embedding (see \cite{ORS08} and references given there). Many, more or less classical results, can be seen
	through the prism ($\Delta$) of the above scheme, where the Ces\`aro operator $C$ is replaced by the appropriate sublinear operator, say, $S$;
	let us give few concrete examples of this type of results, which, however, remain close enough to the Ces\`aro spaces:
	(a) Astashkin \cite{Ast12} (solving the problem posed in \cite{Mas91}) proved that every non-trivial subspace of a Banach space $D_{p}(S)$
	generated by some positive sublinear operator $S$ and an $L_p$-space with $1 \leqslant p < \infty$ contains, for any $\varepsilon > 0$,
	an $(1+\varepsilon)$-copy of $\ell_p$ which is $(1 + \varepsilon)$-complemented in $D_p(S)$ (note that this result generalizes the well-known
	Levy result for the Lions--Petree interpolation spaces $(X_0,X_1)_{\theta,p}$ and, on the other hand, the earlier Astashkin and Maligranda
	result \cite{AM09} for the classical Ces\`aro function spaces $Ces_p$);
	(b) Hudzik and Wla\'zlak in \cite{HW06a} and \cite{HW06b} studied various convexity and monotonicity properties, say ${\mathscr G}$, of the
	space $D_E(S)$, where $E$ is a Banach ideal space. Essentially, they were able to show that if $E \in ({\mathscr G})$ and an injective positive
	sublinear operator $S$ has some special geometric properties corresponding to the property ${\mathscr G}$, then also $D_E(S) \in ({\mathscr G})$.
	This result implies immediate applications for the K\"othe--Bochner spaces and the Ces\`aro--Orlicz spaces, cf.~\cite{KK18a} for a more direct
	approach in the case of different types of the Ces\`aro function spaces;
	(c) Masty\l {}o in \cite{Mas91} and \cite{Mas92} investigated some structural properties of the space $D_E(S)$.
	
	In this paper we give a general and rather natural condition to ensure that the Ces\`aro space $CX$ contains a lattice isometric copy of $\ell_\infty$,
	as long as a rearrangement invariant space $X$ contains such a copy as well.
	
	A Banach space $(X, \norm{\cdot}_X)$ is said to be a {\it Banach lattice} if there is a lattice order on $X$, say $\leqslant$, such that
	for $x,y \in X$ with $\abs{x} \leqslant \abs{y}$ we have $\norm{x}_X \leqslant \norm{y}_X$. By $X_+$ we denote the {\it positive cone}
	of a Banach lattice $X$, that is, $X_+ = \{x \in X \colon x \geqslant 0 \}$.
	
	A mapping $T$ between two Banach lattices $X$ and $Y$ is said to be a {\it lattice} (or {an \it order}) {\it isomorphism} if it is a linear topological
	isomorphism which preserves the order $\leqslant$. If, additionally, the operator norm of a lattice isomorphism
	$T \colon X \rightarrow Y$ is equal one, then we will emphasize this fact by calling $T$ a {\it lattice} (or an {\it order}) {\it isometry}. 
	
	Let $(\Omega ,\Sigma ,\mu )$ be a complete $\sigma $-finite measure space. Denote by $L_{0}(\Omega) = L_{0}(\Omega,\Sigma,\mu)$ the set of all
	(equivalence classes of) real-valued $\mu$-measurable functions defined on a measure space $(\Omega,\Sigma,\mu)$.
	A \textit{Banach ideal space} $X=(X,\norm{\cdot}_X)$ on $(\Omega,\Sigma,\mu)$ is understood to be a Banach space $X$ such that $X$ is a linear
	subspace of $L_{0}(\Omega,\Sigma,\mu)$ satisfying the so-called \textit{ideal property}, which means that if $f,g\in L_{0}(\Omega,\Sigma,\mu)$,
	$\abs{f(t)} \leqslant \abs{g(t)}$ $\mu$-almost everywhere on $\Omega$ and $g\in X$, then $f\in X$ and $\lVert f\rVert_{X}\leqslant \lVert g\rVert _{X}$.
	If it is not stated
	otherwise we assume that a Banach ideal space $X$ on $(\Omega,\Sigma,\mu)$ contains a function which is positive $\mu$-almost everywhere on
	$\Omega$ (such a function is called the \textit{weak unit} in $X$). For a function $f\in L_{0}(\Omega,\Sigma,\mu)$ we define a \textit{support of}
	$f$ as a set $\supp(f)\coloneqq \{x\in \Omega \colon f(x) \neq 0\}$. Moreover, by a \textit{support} $\supp(X)$ of the Banach ideal space
	$X$ on $(\Omega,\Sigma,\mu)$ we mean the smallest (in the sense of inclusion) $\mu$-measurable subset, say $A$, of $\Omega$ such that
	$f\chi_{\Omega \setminus A} = 0$ for all $f\in X$. The existence of the weak unit is equivalent to the following condition that $\supp(X)=\Omega$.
	We say that a Banach ideal space $X$ is \textit{non-trivial} if $X \neq \left\{ 0 \right\}$.
	
	For two Banach ideal spaces $X$ and $Y$ on $\Omega$ the symbol $X \overset{E}{\hookrightarrow} Y$ means that the embedding $X \subset Y$ is
	continuous with the norm not bigger than $E > 0$, that is to say, $\norm{f}_Y \leqslant E\norm{f}_X$ for all $f \in X$. If the embedding
	$X \overset{E}{\hookrightarrow} Y$ holds with some (maybe unknown) constant $E > 0$ we simply write $X \hookrightarrow Y$. Moreover, the symbol
	$X = Y$ ($X \equiv Y$) means that the spaces are the same as a sets and the norms are equivalent (equal, respectively).
	
	We say that an element $x$ in a Banach lattice $X$ is {\it order continuous} (or has {\it order continuous norm})
	if for any sequence $\{x_n\}_{n=1}^\infty$ in $X$ satisfying $0 \leqslant x_n \leqslant \abs{x}$ and $x_n \downarrow 0$ (that is,
	$x_{n+1} \leqslant x_n$ and $\inf_{n \in \mathbb{N}}\{x_n\} = 0$)
	we have $\norm{x_n} \rightarrow 0$ as $n \rightarrow \infty$. By $X_a$ we denote the subspace of all order continuous elements
	of $X$. If $X = X_a$, that is, every element of $X$ is order continuous, then the space $X$ is said to be {\it order continuous}.
		
	By $X^*$ we denote the {\it topological dual space} of a Banach lattice $X$. If $X^{dd} = X$ then we have the {\it Yosida--Hewitt decomposition}
	of the space $X^*$, namely
	\begin{equation*}
		X^* = X_a^* \oplus X^s,
	\end{equation*}
	where $X_a^*$ is the space of all {\it order continuous functionals}, that is to say, $x^* \in X_a^*$ if and only if $x^*(x_n) \rightarrow 0$
	as $n \rightarrow \infty$ for any sequnce $\{x_n\}_{n=1}^\infty \subset X_+$ and $x_n \downarrow 0$,
	and $X^s$ is the space of all {\it singular functionals}, that is to say, $x^* \in X^s$ if and only if $x^*(x) = 0$ for any $x \in X_a$.
	For an order continuous Banach lattice $X$ the spaces $X_a^*$ and $X^*$ coincide. If $X$ is a Banach ideal space on $(\Omega,\Sigma,\mu)$
	such that $\supp(X_a) = \supp(X)$ then the space $X_a^*$ coincide with the {\it K\"{o}the dual} space (or {\it associated space}) $X'$
	\begin{equation*}
		X' \coloneqq \left\{f \in L_0(\Omega) \colon \sup\limits_{\norm{g}_X \leqslant 1} \int_\Omega \abs{f(t)g(t)}d\mu < \infty \right\}.
	\end{equation*}
	Recall that $X \overset{1}{\hookrightarrow} X''$ and we have the equality $X \equiv X''$ if and only if the norm in $X$ has the {\it Fatou property},
	that is to say, if the conditions $0 \leqslant f_{n}\uparrow f \in L_{0}$ with $\{f_{n}\}_{n=1}^\infty \subset X$ and $\sup_{n\in \mathbb{N}}\Vert f_{n}\Vert_{X}<\infty$
	imply that $f\in X$ and $\Vert f_{n}\Vert_{X}\uparrow \Vert f\Vert _{X}$.
	
	A closed linear subspace $J$ of a Banach lattice $X$ is called an {\it order ideal} (or simply {\it ideal}) if it has
	the so-called {\it ideal property} which means that if $f \in J$, $g \in X$ and $\abs{g} \leqslant \abs{f}$ then also $g \in J$
	(subspaces with this property are also called {\it solid}). It is clear that $\{ 0 \}$ and the
	space $X$ itself are order ideals of $X$. Non-trivial example of an order ideal is the closure (in norm topology of the Banach ideal space $X$)
	of the set of simple functions $X_b$ (see \cite[Theorem~3.11, p.~18]{BS88}) and $X_a$, that is, the subspace of all functions
	with order continuous norm in $X$ (see \cite[Theorem~3.8, p.~16]{BS88}). An intersection and a sum of any two ideals is again an ideal
	(cf. \cite[Proposition~1.2.2, p.~12]{MN91}). If $J$ is a closed ideal in a Banach lattice $X$, then $X / J$ is a Banach lattice
	with respect to the quotient norm, that is to say, $\norm{x}_{X / J} = \inf\{\norm{y}_X \colon Q(y) = Q(x) \}$, where
	$Q \colon J \rightarrow X / J$ is the cannonical quotient map (cf. \cite[Corollary~1.3.14, p.~32]{MN91}).
	
	In this paper we will deal with Banach lattices but the main focus will be on {\it rearrangement invariant spaces} (or {\it symmetric spaces}),
	that is to say, the Banach ideal spaces with the additional property that for any two equimeasurable functions $f, g \in L_{0}$
	(which means that they have the same distribution functions $d_{f} \equiv d_{g}$, where
	$d_{f}(\lambda) \coloneqq \mu(\{t\in \Omega \colon |f(t)| > \lambda\})$ for $\lambda \geqslant 0$)
	and if $f\in X$ then $g \in X $ and $\Vert f\Vert _{X}=\Vert g\Vert _{X}$. Due to the Luxemburg representation theorem, it is enough to
	consider rearrangement invariant spaces only on three separable measure spaces, namely, the unit interval $\left( 0,1\right)$ or the semi-axis
	$\left( 0,\infty \right)$ with the Lebesgue measure $m$ (in this case we will talk about {\it function spaces}) and the set of positive
	integers $\mathbb{N}$ with the counting measure $\#$ (and then we will refer to {\it sequence spaces}).
	Moreover, by a {\it non-increasing rearrangement} of a function $f \colon \Omega \rightarrow \mathbb{R}$ we mean the function
	$f^* \colon [0,\infty) \rightarrow [0,\infty]$ defined as
	\begin{equation*}
		f^{\ast }(t) = \mathrm{inf}\{\lambda > 0 \colon d_{f}(\lambda) \leqslant t\} \text{ for } t \geqslant 0,
	\end{equation*}
	under the convention $\mathrm{inf}\{\emptyset\} = \infty$. In the sequence case, however, we need a little modification of the above definition,
	namely $x_n^* = \mathrm{inf}\{\lambda > 0 \colon d_x(\lambda) < n\}$, where $x = \{x_n\}_{n=1}^\infty$.
	
	The \textit{fundamental function} $\varphi _{X}$ of a rearrangement invariant space $X$ on $\Omega$ is defined by the following formula
	\begin{equation*}
		\varphi _{X}(t)=\Vert \chi _{(0,t)}\Vert _{X} \text{ for } t > 0,
	\end{equation*}
	(with a fairly obvious modification when $\Omega = \mathbb{N}$, i.e., $\varphi_X(n) = \norm{\sum_{k=1}^n e_k}_X$ for $n \in \mathbb{N}$,
	where $\{e_n\}_{n=1}^\infty$ is the canonical basic sequence of $X$) where $\chi_A$, throughout, will denote the characteristic function
	of a set $A$. It is well-known that fundamental function is quasi-concave on $\Omega$, that is: $\varphi _{X}(0) = 0$; $\varphi _{X}$
	is positive and non-decreasing; $t \mapsto \varphi _{X}(t)/t$ is non-increasing for $t > 0$ or, equivalently,
	$\varphi _{X}(t) \leqslant \max\{1,t/s\}\varphi _{X}(s)$ for all $s,t \in \Omega$.
	
	For some general properties of Banach ideal and rearrangement invariant spaces we refer, for example, to \cite{BS88}, \cite{KPS82},
	\cite{LT77}, \cite{LT79} and \cite{Ma89}. More information about Banach lattices can be found, for example, in \cite{AB85}, \cite{KA82} and \cite{MN91}
	
	Let us recall some examples of rearrangement invariant spaces. Each increasing concave function $\varphi$ on $\Omega$
	generates the {\it Lorentz function space $\Lambda_\varphi$} on $\Omega$ endowed with the norm
	\begin{equation*}
		\norm{f}_{\Lambda_\varphi} = \int_\Omega f^*(t)d\varphi(t) = \varphi(0^+)\norm{f}_{\infty} + \int_0^{m(\Omega)} f^*(t)\varphi'(t)dt < \infty.
	\end{equation*}
	
	Recall also that for a {\it quasi-concave function $\varphi$} the \textit{Marcinkiewicz function space} $M_{\varphi }$ on $\Omega$ is defined in
	the following way
	\begin{equation*}
		M_{\varphi} = \left\{ f\in L_{0}(\Omega) \colon \norm{f}_{M_{\varphi}} = \sup_{t \in \Omega}\frac{\varphi(t)}{t}\int_0^t f^*(s)ds < \infty \right\}.
	\end{equation*}
	
	Lorentz and Marcinkiewicz spaces play quite a special role among rearrangement invariant spaces, namely they are the smallest and, respectively,
	the largest (in a sense of inclusion) rearrangement invariant spaces with a given fundamental function. More precisely, for a given rearrangement
	invariant space $X$ with the fundamental function $\varphi$ (note that every such a function is equivalent to a concave function) we have the
	following embeddings
	\begin{equation*}
		\Lambda_\varphi \overset{2}{\hookrightarrow} X \overset{1}{\hookrightarrow} M_\varphi.
	\end{equation*}
	
	Let $F$ be an non-decreasing convex function on $[0,\infty)$ such that $F(0) = 0$ and let $X$ be a Banach ideal space on $\Omega$.
	The {\it Calder\'on--Lozanovski{\u \i} space $X_F$} on $\Omega$ is defined as the space of all measurable functions $f \colon \Omega \rightarrow \mathbb{R}$
	for which the followng norm, the so-called {\it Luxemburg--Nakano norm}, is finite
	\begin{equation*}
		\norm{f}_{X_F} = \inf\left\{\lambda > 0 \colon \norm{F(f/\lambda)}_X \leqslant 1 \right\} < \infty.
	\end{equation*}
	The spaces $X_F$ were introduced by Calder\'on \cite[p.~122]{Cal64} and Lozanovski{\u \i} \cite{Lo65}.
	This is a Banach ideal space. Moreover, if $X$ is a rearrangement invariant space with the Fatou property, then the space $X_F$ is like that as well.
	Note also that in the case when $X = L_1$ the space $X_F$ is just the {\it Orlicz space $L_F$} equipped with the Luxemburg--Nakano norm.
	On the other hand, if $X$ is a Lorentz space $\Lambda_\varphi$, then $X_F$ is the corresponding {\it Orlicz--Lorentz space $\Lambda_{F,\varphi}$}.
	It is also clear that in the special situation, when $F(t) = t$, the Orlicz--Lorentz space $\Lambda_{F,\varphi}$ coincide, up to equality of norms,
	with the Lorentz space $\Lambda_\varphi$.
	Finally, if $F(t) = t^{p}$, where $1 \leqslant p < \infty$, then the space $X_F$ is the {\it $p$-convexification $X^{(p)}$} of the space $X$
	equipped with the norm $\norm{f}_{X^{(p)}} = \norm{\abs{f}^p}_X^{1/p}$ and if $F(t) = 0$ for $0 \leqslant t \leqslant 1$ and $F(t) = \infty$
	for $t > 1$, then $X_F \equiv L_\infty$.
	
	In the case of sequence spaces we prefer a slight modification of the introduced notation, that is, if this make sense, we will use small letters
	to denote the space, for example, we will just denote the Lorentz space $\Lambda_\varphi$ on $\mathbb{N}$ as $\lambda_\varphi$ and the Orlicz space
	$L_F$ on $\mathbb{N}$ as $\ell_F$, etc.
	
	We will use the notation $A \preccurlyeq B$ or $B \succcurlyeq A$ to denote an estimate of the form $A \leqslant CB$ for some constant $C > 0$
	depending on the involved parameters only. We also write $A \asymp B$ for $A \preccurlyeq B \preccurlyeq  A$. Other notations and definitions
	will be introduced as needed.
	
	Now we give a brief overview of the paper.
	
	Section 2 is slightly different in nature to the rest of this article. Here we revisit some Hudzik's \cite{Hu98} characterizations of Banach
	lattices containing a lattice isometric copies of $\ell_\infty$ and complete one of them (Theorem~\ref{Hudzik w druga strone}). This connection
	between, on the one hand, the existence of a lattice isometric copy of $\ell_\infty$ in a Banach lattice $X$ and, on the other, with the fact
	that the unit sphere in the space $X$ must contain an element, say $x$, with the property that the distance from $x$ to the order ideal of all
	order continuous elements in $X$ is equal exactly one, will be our starting point and the leitmotif to which we will come back in later parts
	of our work. 
	
	Section 3 is rather technical in character. In short, we show there that the quotient spaces $X/X_a$, where $X_a$ is the ideal of all order
	continuous elements in a rearrangement invariant space $X$, behave somewhat similar to a rearrangement invariant space, that is to say,
	they have the {\it ideal property} (Corollary~\ref{colnierownosc}) and they are {\it symmetric} (Theorem~\ref{fgwiazdka}). We will
	also explain how our formulas are related to some of de Jonge's earlier results from \cite{deJ77}.
	
	In Section 4 we will focus on Ces\`aro spaces. Using the results from the previous sections we prove some general theorems about a lattice
	isometric copies of $\ell_\infty$ in these spaces (Theorem~\ref{CX ma izometryczne l_infty} and Theorem~\ref{CX ma kopie linfty Xa = 0})
	and justify (which is not so easy) that they really generalize the so far known results obtained in the class of Ces\`aro--Orlicz spaces
	in \cite{KK09} and \cite{KK18b}.
	
	Finally, at the end of this paper, we include Appendix, which was created for the purposes of \S4 (and, at the same time, it complements the
	results obtained by the first and last author in \cite{KT17}). It contains a fairly satisfactory description of the ideal of all order
	continuous functions in the Ces\`aro sequence spaces $CX$ expressed in the language of the ideal $X_a$.
	
	\section{About isometric copies of $\ell_{\infty}$ one more time}
	
	Lozanowski{\u \i}'s well-known result from \cite{Lo69} gives some geometric characterization of the order continuity property.
	More precisely, it states that a Banach lattice $X$ contains a lattice isomorphic copy of $\ell_\infty$ if and only
	if the space $X$ is not order continuous. Of course, every isomorphic copy of $\ell_\infty$ is automatically complemented in
	the ambient space due to the fact that the space $\ell_\infty$ is isometrically injective. Note also that a Banach lattice $X$ contains a
	(lattice) almost isometric copy of $\ell_\infty$ whenever it contains a (lattice) isomorphic copy of $\ell_\infty$ (see \cite[Theorem~1]{HM93}
	and \cite[Theorem~3]{Pa81}). It seems to be worth comparing this statement with the classical James result \cite{Ja64} (often called simply
	the {\it James's distortion theorem}), which describes a similar phenomenon but for spaces $c_0$ and $\ell_1$. Moreover, if a Banach space
	$X$ contains an asymptotically isometric copy of $\ell_\infty$, then it contains even an isometric copy of $\ell_\infty$ (see \cite[Theorem~6]{Do00} and
	\cite[Theorem~2.5]{CHL17} for a \enquote{lattice version}).
	However, in general, the lack of order continuity does not guarantee the existence of {\it any} isometric copy of $\ell_\infty$.
	Indeed, let $X$ and $Y$ be two Banach function spaces on $\Omega$ with non-trivial intersection $X \cap Y$ such that the first
	one is not order continuous and the second one is strictly convex (for example, we can take $X = L_\infty$ and $Y = L_2$).
	Then the space
	\begin{equation*}
		\mathcal{X} = X \cap Y \text{ with the norm } \norm{f}_{\mathcal{X}} = \norm{f}_X + \norm{f}_Y,
	\end{equation*}
	is strictly convex and is not order continuous. Consequently, in view of the Lozanowski{\u \i} result \cite{Lo69}, it contains
	a lattice isomorphic copy of $\ell_\infty$ but clearly cannot contain an isometric copy of $\ell_\infty$.
	Note also that the same {\it trick} will work if instead of strict convexity we will require that the second space is strictly
	monotone. In this case, however, the space $Z$ will be strictly monotone and not order continuous, so it will contains a
	lattice isomorphic but not lattice isometric copy of $\ell_\infty$. Let us now recall the following classical result.
	
	\begin{theoremletters}[Riesz's lemma]
		{\it If $Y$ is a proper closed linear subspace of a normed space $X = (X, \norm{\cdot})$, then for any $0 < \varepsilon < 1$ there
			exists $x \in S(X)$ such that} $\distE{x, Y} \geqslant 1 - \varepsilon$.
	\end{theoremletters}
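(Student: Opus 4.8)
The plan is to produce the desired near-perpendicular unit vector by normalizing a carefully chosen vector that is \enquote{almost as far as possible} from $Y$. Since $Y$ is a \emph{proper} subspace, I would first fix any $z \in X \setminus Y$. The closedness of $Y$ is exactly what guarantees that the number $d \coloneqq \distE{z,Y} = \inf_{y \in Y}\norm{z - y}$ is strictly positive; indeed, if $d$ were equal to zero, then $z$ would belong to the closure of $Y$, which coincides with $Y$, contradicting the choice of $z$.

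Next, given $0 < \varepsilon < 1$, I would use that $d/(1-\varepsilon) > d$ together with the very definition of the infimum in order to select some $y_0 \in Y$ satisfying $\norm{z - y_0} < d/(1-\varepsilon)$. Setting $x \coloneqq (z - y_0)/\norm{z - y_0}$ then produces a vector lying on the unit sphere, $x \in S(X)$, and it remains only to estimate its distance to $Y$ from below.

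The heart of the argument is the following short computation, which exploits the linear structure of $Y$. For an arbitrary $y \in Y$, the vector $y_0 + \norm{z - y_0}\,y$ again belongs to $Y$, so by the defining property of $d$ one obtains
\begin{equation*}
	\norm{x - y} = \frac{1}{\norm{z - y_0}}\,\norm{z - \bigl(y_0 + \norm{z - y_0}\,y\bigr)} \geqslant \frac{d}{\norm{z - y_0}} > \frac{d}{d/(1-\varepsilon)} = 1 - \varepsilon.
\end{equation*}
Passing to the infimum over all $y \in Y$ then yields $\distE{x,Y} \geqslant 1 - \varepsilon$, as required. I do not expect any serious obstacle here; the only genuinely clever point---and the step I would be most careful to get right---is the rescaling $y \mapsto y_0 + \norm{z - y_0}\,y$ inside $Y$, since it is precisely this maneuver that allows the single estimate $\norm{z - \,\cdot\,} \geqslant d$ to be leveraged uniformly over the whole of $Y$ after normalization.
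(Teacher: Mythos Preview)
Your argument is correct and is precisely the standard textbook proof of Riesz's lemma. Note, however, that the paper does not supply its own proof of this statement: Riesz's lemma is merely recalled there as a classical fact (labeled Theorem~A) to motivate the discussion that follows, so there is no proof in the paper to compare yours against.
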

	
	It can be demonstrated that the above result is in general not true for $\varepsilon = 0$. Actually, for a Banach space $X$ to have the property
	that given a proper closed linear subspace $Y$ of $X$ there exists $x \in S(X)$ with $\distE{x, Y} = 1$ it is necessary and sufficient
	that the space $X$ is reflexive (cf. \cite[Notes and Remarks on p.~6]{Di84}). In fact, James \cite{Ja57} proved that a Banach space $X$
	is reflexive if and only if every continuous linear functional on $X$ is norm attaining. Therefore, if $X$ is nonreflexive, then there is a linear
	functional, say $\varphi$, in $S(X^*)$ which does not achieve its norm. Taking $Y = \text{ker}(\varphi)$ we see immediately that $Y$ is a proper
	closed linear subspace of $X$ and there is no such element $x \in S(X)$ with $\distE{x, Y} = 1$. On the other hand, if $X$ is reflexive
	and $Y$ is a closed linear subspace of $X$, then applying the Hahn--Banach theorem and bearing in mind the result of James we see that there
	exists $\psi \in S(X^*)$ such that $Y \subset \text{ker}(\psi)$ and $x \in S(X)$ with $\psi(x) = 1$. Consequently, we have the following inequalities:
	\begin{equation*}
		\norm{x - y} \geqslant \abs{\psi(x) - \psi(y)} = \abs{\psi(x)} = 1 \text{ for all } y \in Y,
	\end{equation*}
	that is, $\distE{x, Y} = 1$. On the other hand, the existence of an element realizing the distance from the ideal $X_a$, where now $X$
	is a Banach ideal space, is closely related to the fact that the space $X$ contains a lattice isometric copy of $\ell_\infty$.
	Anyhow, for $X = \ell_\infty$ and $f = \chi_{\mathbb{N}}$ we have
	$\norm{f}_{X} = \distE{\chi_{\mathbb{N}}, X_a = c_0} = 1$ and, in a sense, this is the generic case.

	\begin{theoremletters}[H. Hudzik, 1998 \cite{Hu98}] \label{Twierdzenie Hudzika}
		{\it Let $X$ be a super $\sigma$-Dedekind complete Banach lattice with the semi-Fatou property
			such that $(X_a^d)^d = X$ \footnote{In less general situation, e.g. when $X$ is a Banach ideal space, this condition simply
			means that $\supp(X_a) = \supp(X)$ (cf.~\cite[p.~523]{Hu98}).}. If we can find an element $f \in X$ such that} $\norm{f}_X = \distE{f, X_a} = 1$,
			{\it  then $X$ contains a lattice isometric copy of $\ell_\infty$.}
	\end{theoremletters}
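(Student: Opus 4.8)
The plan is to manufacture a disjoint sequence of band components of $f$ on which the norm of $X$ collapses to the supremum norm. First I would reduce to the case $f \geqslant 0$. Since $X_a$ is a closed ideal, the canonical quotient map $Q \colon X \to X/X_a$ is a lattice homomorphism, so $\abs{Qf} = Q\abs{f}$ and hence $\distE{f, X_a} = \norm{Qf}_{X/X_a} = \norm{Q\abs{f}}_{X/X_a} = \distE{\abs{f}, X_a}$; thus $\abs{f}$ satisfies the same hypotheses and we may assume $f \geqslant 0$. Note also that for every $g \in X_a$ with $0 \leqslant g \leqslant f$ one has $1 = \distE{f, X_a} \leqslant \norm{f-g}_X \leqslant \norm{f}_X = 1$, so in fact $\norm{f-g}_X = 1$; this identity is the basic source of the \enquote{norm one} elements used below.

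The key reduction is that it suffices to produce pairwise disjoint band components $f_n \leqslant f$ with $\norm{f_n}_X = 1$ for every $n$. Indeed, given such a sequence, the super $\sigma$-Dedekind completeness of $X$ guarantees that for each $(a_n) \in \ell_\infty$ the order sum $T((a_n)) \coloneqq \sum_n a_n f_n$ exists in $X$, while the ideal property together with $\sup_n \abs{a_n} f_n \leqslant \norm{(a_n)}_{\ell_\infty}\, f$ keeps it inside $X$ (the semi-Fatou property is what lets us pass the norm through these order limits). Because the $f_n$ are disjoint and positive, $T$ is linear and satisfies $\abs{T((a_n))} = \sup_n \abs{a_n} f_n$ and $T(x \vee y) = T(x) \vee T(y)$, so $T$ is a lattice homomorphism. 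For the isometry: on the one hand $\sup_n \abs{a_n} f_n \leqslant \norm{(a_n)}_{\ell_\infty}\, f$ gives $\norm{T((a_n))}_X \leqslant \norm{(a_n)}_{\ell_\infty}$; on the other hand $\abs{a_k} f_k \leqslant \sup_n \abs{a_n} f_n$ together with $\norm{f_k}_X = 1$ gives $\norm{T((a_n))}_X \geqslant \abs{a_k}$ for every $k$, and taking the supremum over $k$ finishes the computation. Hence $T$ is a lattice isometry of $\ell_\infty$ into $X$.

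It therefore remains to build the disjoint sequence, and this is the heart of the matter. I would isolate a \emph{splitting lemma}: if $h \geqslant 0$ and $\distE{h, X_a} = 1$, then $h$ decomposes into two disjoint band components $h = h' + h''$ with $\distE{h', X_a} = \distE{h'', X_a} = 1$ (each then automatically of norm one). Granting this, one iterates --- split $f = f_1 + g_1$, then $g_1 = f_2 + g_2$, and so on --- so that every remainder $g_n$ again has distance one and the procedure never terminates, yielding the required infinite disjoint family $(f_n)$ with $\sup_n f_n \leqslant f$. To prove the splitting lemma I would start from the fact that $h \notin X_a$, use the order-continuity characterization to extract a disjoint sequence $(D_k)$ with $\inf_k \norm{h \chi_{D_k}}_X > 0$, and then set $A' = \bigcup_{k \text{ even}} D_k$ and $A'' = \bigcup_{k \text{ odd}} D_k$ and consider $h\chi_{A'}$, $h\chi_{A''}$ --- these unions and the associated suprema behave well precisely because $X$ is super $\sigma$-Dedekind complete. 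The remaining and decisive point is to show that each half still carries the \emph{full} distance one; here I would invoke a localization property of the functional $A \mapsto \distE{h\chi_A, X_a}$ on disjoint supports (the kind of structural analysis of the quotient $X/X_a$ developed in Section~3, in the spirit of de Jonge's formulas), asserting that the singular part of the norm distributes maximally across disjoint supports and cannot be concentrated on an atom --- atoms being order continuous, hence invisible to the distance.

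The main obstacle is exactly this last step: producing two disjoint pieces each of full distance one, rather than merely of positive distance. That this can fail without the completeness hypothesis is illustrated by the space $c$ of convergent sequences with $X_a = c_0$, where $f = \chi_{\mathbb N}$ has $\distE{f, c_0} = 1$ yet cannot be split into two disjoint components of limit one; consistently, $c$ is separable and contains no copy of $\ell_\infty$, and it is not super $\sigma$-Dedekind complete. Thus super $\sigma$-Dedekind completeness is what supplies \enquote{enough disjoint directions} to divide the singular mass, the semi-Fatou property is what transfers the distance and the norm through the order limits taken along the way, and the hypothesis $\supp(X_a) = \supp(X)$ (the meaning of $(X_a^d)^d = X$ in this setting) guarantees that $X_a$ is large enough for the quotient description of the distance to be valid.
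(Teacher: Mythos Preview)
First, a framing remark: the paper does not prove this statement. Theorem~\ref{Twierdzenie Hudzika} is quoted from Hudzik \cite{Hu98} and used as a black box; the paper's own contribution in \S2 is the converse, Theorem~\ref{Hudzik w druga strone}. So there is no in-paper proof to compare your attempt against, and what follows is an assessment of your sketch on its own terms.

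Your overall architecture is the natural one and matches the shape of Hudzik's argument: reduce to $f \geqslant 0$; manufacture a pairwise disjoint sequence $(f_n)$ with $0 \leqslant f_n \leqslant f$, $\norm{f_n}_X = 1$ and $\norm{\sup_n f_n}_X \leqslant \norm{f}_X = 1$; then set $T((a_n)) = \sum_n a_n f_n$ and verify that $T$ is a lattice isometry. That last verification is correct as you wrote it (and is essentially Theorem~1 of \cite{Hu98}, the result invoked in the proof of Theorem~\ref{Hudzik w druga strone}). The reduction to $f \geqslant 0$ and the observation that $\norm{f - g}_X = 1$ for every $g \in X_a$ with $0 \leqslant g \leqslant f$ are also correct and are indeed the engine of the construction.

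The gap is exactly where you locate it, but your proposed repair does not close it. From $h \notin X_a$ one can extract pairwise disjoint bands $D_k$ with $\inf_k \norm{h\chi_{D_k}}_X > 0$, yet the even/odd split gives no control on the \emph{distance}: since $Q \colon X \to X/X_a$ is a lattice homomorphism one has only
\[
\max\bigl(\distE{h\chi_{A'}, X_a},\, \distE{h\chi_{A''}, X_a}\bigr) \;\leqslant\; \distE{h, X_a} \;\leqslant\; \distE{h\chi_{A'}, X_a} + \distE{h\chi_{A''}, X_a},
\]
which is perfectly compatible with both halves having distance $\tfrac{1}{2}$. The \enquote{maximal distribution} you hope for is precisely the statement that $X/X_a$ is an $AM$-space, which by de~Jonge \cite{deJ77} is equivalent to $X$ being a semi-$M$ space --- an extra hypothesis you do not have --- and even then it would only force \emph{one} half to carry the full distance, not both. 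Your appeal to Section~3 is likewise misplaced: those results rest on $f^*$ and the explicit formula of Theorem~\ref{thm: bardziej konstruktywna odleglosc}, and say nothing about an abstract Banach lattice. What is actually needed is a more careful inductive construction that, at each stage, uses the identity $\norm{f - g}_X = 1$ (for suitable $g \in X_a$) together with super $\sigma$-Dedekind completeness and the semi-Fatou property to carve off a single norm-one disjoint piece while keeping the remainder in the regime $\norm{\cdot}_X = \distE{\cdot, X_a} = 1$. As written, your splitting lemma is a statement of what must hold rather than a proof that it does.
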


	Let's us consider the space $\mathfrak{X} = (L_\infty, \norm{\cdot}_{\mathfrak{X}})$, where $\norm{\cdot}_{\mathfrak{X}}$ is an equivalent norm on $L_\infty$
	given by the formula: $\norm{f}_{\mathfrak{X}} = \norm{f}_{L_\infty} + \norm{f}_{Y}$ for $f \in L_\infty$, and $Y$ is strictly convex
	space such that $L_\infty \hookrightarrow Y$. Then the subspace $\mathfrak{X}_a$ is trivial and
	$$\distE{f, \mathfrak{X}_a} = \distE{f, \{ 0 \}} = \norm{f}_{\mathfrak{X}} = 1 \text{ for all } f \in S(\mathfrak{X}).$$
	However, the space $\mathfrak{X}$ cannot contain a lattice isometric copy of $\ell_\infty$ as we have already explained above.
	Therefore the assumption $(X_a^d)^d = X$ in Theorem \ref{Twierdzenie Hudzika} about the support of the ideal $X_a$ cannot be omitted
	in general.
	
	Needless to say, the question whether the condition from Theorem \ref{Twierdzenie Hudzika} is also sufficient imposes itself.
	We believe that, this fact belongs to {\it folklore} but, as far as we know, it was never explicitly mentioned (even in \cite{Hu98}).
	To fill this gap and avoid the impression (which the authors themselves had) that this condition may only be necessary, we will give
	a short proof which completes Hudzik's result.
	
	\begin{theorem} \label{Hudzik w druga strone}
		{\it Assume that $X$ is a Banach ideal space over a $\sigma$-finite measure space with the semi-Fatou property
			such that $\supp(X_a) = \supp(X)$. The space $X$ contains a lattice isometric copy of $\ell_\infty$
			if and only if there exists an element $f \in X$ with} $\norm{f}_X = \distE{f, X_a} = 1$.
	\end{theorem}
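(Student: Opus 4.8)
The stated equivalence has one direction already in hand. Indeed, a Banach ideal space over a $\sigma$-finite measure space is automatically super $\sigma$-Dedekind complete, and (by the footnote to Theorem~\ref{Twierdzenie Hudzika}) the condition $(X_a^d)^d = X$ is here nothing but $\supp(X_a) = \supp(X)$; hence the implication \textquotedblleft existence of such an $f$ implies a lattice isometric copy of $\ell_\infty$\textquotedblright{} is precisely Theorem~\ref{Twierdzenie Hudzika}. The plan is therefore to prove only the reverse (necessity) implication, for which, incidentally, neither the semi-Fatou property nor the support condition is needed.

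Suppose then that $X$ contains a lattice isometric copy of $\ell_\infty$, realized by a lattice isometry $T \colon \ell_\infty \to X$. Put $u_n \coloneqq T(e_n)$ and $f \coloneqq T(\chi_{\mathbb{N}})$. Since $T$ is a lattice homomorphism and the $e_n$ are pairwise disjoint in $\ell_\infty$, the elements $\{u_n\}$ are pairwise disjoint and positive, $\norm{u_n}_X = 1$, and $\norm{f}_X = \norm{\chi_{\mathbb{N}}}_{\ell_\infty} = 1$. As $0 \in X_a$, the bound $\distE{f, X_a} \leqslant \norm{f}_X = 1$ is immediate, so everything reduces to checking that $\norm{f - g}_X \geqslant 1$ for every $g \in X_a$.

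To this end I would exploit the band projections available in any Banach ideal space over a $\sigma$-finite measure space: since the supports $\supp(u_m)$ are pairwise disjoint, for each $m$ the map $P_m(h) \coloneqq h\,\chi_{\supp(u_m)}$ is a band projection. From $f \geqslant u_m$ together with the fact that $f - u_m = T(\chi_{\mathbb{N}} - e_m)$ is disjoint from $u_m$ one reads off $P_m f = u_m$. Because $\abs{P_m(f - g)} = \abs{f - g}\,\chi_{\supp(u_m)} \leqslant \abs{f - g}$, the ideal property yields, for every $m$,
\[
	\norm{f - g}_X \geqslant \norm{P_m(f - g)}_X = \norm{u_m - P_m g}_X \geqslant 1 - \norm{P_m g}_X .
\]

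It remains to see that $\norm{P_m g}_X \to 0$ as $m \to \infty$, and here the order continuity of $g$ enters. Writing $A_N \coloneqq \bigcup_{m > N}\supp(u_m)$ and $s_N \coloneqq \abs{g}\,\chi_{A_N}$, disjointness of the supports forces $A_N \downarrow \emptyset$ (up to a null set), whence $s_N \downarrow 0$ with $0 \leqslant s_N \leqslant \abs{g} \in X_a$; order continuity then gives $\norm{s_N}_X \to 0$, and since $\supp(u_m) \subseteq A_{m-1}$ implies $\norm{P_m g}_X \leqslant \norm{s_{m-1}}_X$, we conclude $\norm{P_m g}_X \to 0$. Letting $m \to \infty$ in the displayed inequality yields $\norm{f - g}_X \geqslant 1$ for all $g \in X_a$, so $\distE{f, X_a} \geqslant 1$, completing the necessity. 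The only genuinely delicate point is this lower estimate for $\distE{f, X_a}$: the whole argument hinges on funnelling $f - g$ through the disjoint bands $\supp(u_m)$, on which $f$ reproduces the generators $u_m$ of norm one while the order-continuous part $g$ is forced to fade away.
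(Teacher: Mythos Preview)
Your proof is correct and follows essentially the same route as the paper. The paper invokes \cite[Theorem~1]{Hu98} to extract a pairwise disjoint normalized sequence $\{f_n\}\subset S(X)$ with $\norm{\sum_n f_n}_X = 1$, whereas you build this sequence by hand as $u_n = T(e_n)$, $f = T(\chi_{\mathbb N})$; after that, both arguments bound $\norm{f-g}_X$ from below by restricting to $\supp(u_m)$ and use order continuity of $g$ to make $\norm{g\chi_{\supp(u_m)}}_X\to 0$, which is exactly your $\norm{P_m g}_X\to 0$.
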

	\begin{proof}
		Using Theorem 1 from \cite{Hu98} we can find a sequence $\{ f_n \}_{n=1}^\infty \subset S(X)$ with
		$\supp(f_n) \cap \supp(f_m) = \emptyset$ for $n \neq m$ such that $\norm{\sum_{n=1}^{\infty} f_n}_X = 1$.
		Take $g \in X_a$ and observe that
		\begin{align*}
		\norm{\sum_{n=1}^{\infty} f_n - g}_X & \geqslant \sup\limits_{n \in \mathbb{N}} \norm{f_n - g\chi_{\supp(f_n)}}_X \\
								& \geqslant \abs{\norm{f_n}_X - \norm{g\chi_{\supp(f_n)}}_X} \rightarrow 1,
		\end{align*}
		because  $\norm{g\chi_{\supp(f_n)}}_X \rightarrow 0$ as $n \rightarrow \infty$.
	\end{proof}

	Reffering to Theorem \ref{Hudzik w druga strone}, if a Banach ideal space $X$ contains a lattice isometric copy
	of $\ell_\infty$, then we can find  $f \in X$ with $\norm{f}_X = \distE{f, X_a} = 1$. So, looking from
	a slightly different perspective, this means that $\norm{f}_{X / X_a} = 1$ and there exist a singular functional
	$S \in S(X^*)$ with $S(f) = 1$ (note only that $(X / X_a)^* \approx X_a^{\perp} = X^s$, where $X^s$ is the space of
	all singular functionals, that is to say, those $S \in X^*$ for which $S(x) = 0$ for all $x \in X_a$).
	The converse of this statement is also true but before we give a short proof let us note the following result.
	
	\begin{lemma} \label{Lemat z funkcjonalem singularnym}
		{\it Let $X$ be a Banach lattice. If $S$ is a singular functional on $X$ then we have the following formula}
			\begin{equation*} \label{norma f. singularnego}
			\norm{S}_{X^{*}} = \sup\left\{ \frac{S(f)}{\distE{f,X_a}} \colon f \in X \setminus X_a \text{ with } \norm{f}_X = 1 \right\}.
			\end{equation*}
	\end{lemma}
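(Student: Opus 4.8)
The plan is to verify the identity by proving the two matching inequalities, after recording two elementary observations that will be used repeatedly. First, since $X_a$ is a (closed) linear subspace, $\distE{\lambda f, X_a}=\lambda\,\distE{f,X_a}$ for every $\lambda>0$, so the quotient $S(f)/\distE{f,X_a}$ is invariant under positive scaling of $f$; this means the normalisation $\norm{f}_X=1$ in the supremum is harmless and may be imposed or dropped at will. Second, taking $g=0$ gives the crude bound $\distE{f,X_a}\leqslant\norm{f}_X$. I may assume $S\neq 0$ and $X\neq X_a$, the complementary cases being degenerate (when $X=X_a$ the only singular functional is the zero functional and the supremum is over an empty set).

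First I would prove $\sup\{\cdots\}\leqslant\norm{S}_{X^*}$. Fix $f\in X\setminus X_a$; since $X_a$ is closed, $\distE{f,X_a}>0$. For each $g\in X_a$ the singularity of $S$ gives $S(g)=0$, whence $S(f)=S(f-g)\leqslant\norm{S}_{X^*}\norm{f-g}_X$. Passing to the infimum over $g\in X_a$ yields $S(f)\leqslant\norm{S}_{X^*}\,\distE{f,X_a}$, and dividing by $\distE{f,X_a}>0$ and taking the supremum over admissible $f$ gives the asserted inequality.

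For the reverse inequality $\norm{S}_{X^*}\leqslant\sup\{\cdots\}$ I would argue by near-norming. Given $\varepsilon>0$, pick $f$ with $\norm{f}_X=1$ and $S(f)>\norm{S}_{X^*}-\varepsilon$; after possibly replacing $f$ by $-f$ we may assume $S(f)>0$, which in particular forces $f\notin X_a$, since $S$ annihilates $X_a$. Using the crude bound $\distE{f,X_a}\leqslant\norm{f}_X=1$ together with $S(f)>0$ we obtain
\begin{equation*}
	\frac{S(f)}{\distE{f,X_a}}\geqslant S(f)>\norm{S}_{X^*}-\varepsilon,
\end{equation*}
so the supremum exceeds $\norm{S}_{X^*}-\varepsilon$; letting $\varepsilon\downarrow 0$ finishes the argument.

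The whole argument is elementary and presents no serious obstacle. The one point worth emphasising is in the lower bound: one might expect to need an element genuinely realising the distance to $X_a$, but in fact the trivial estimate $\distE{f,X_a}\leqslant\norm{f}_X$ combined with the strict positivity of $S(f)$ already suffices.
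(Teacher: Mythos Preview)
Your proof is correct and follows essentially the same approach as the paper: both directions rest on the two elementary facts that $S$ vanishes on $X_a$ (so $S(f)=S(f-g)$ for every $g\in X_a$) and that $\distE{f,X_a}\leqslant\norm{f}_X$. The only differences are cosmetic---you prove the two inequalities in the opposite order and phrase the lower bound via an $\varepsilon$-argument, whereas the paper writes the same estimate as a direct chain of inequalities.
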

	\begin{proof}
		First, observe that $S(f) = 0$ for all $f \in X_a$, so
		\begin{align*}
		\norm{S}_{X^{*}} & = \sup\left\{ S(f) \colon f \in X \text{ with } \norm{f}_X = 1 \right\} \\
		& = \sup\left\{ S(f) \colon f \in X \setminus X_a \text{ with } \norm{f}_X = 1 \right\} \\
		& \leqslant \sup\left\{ \frac{S(f)}{\distE{f,X_a}} \colon f \in X \setminus X_a \text{ with } \norm{f}_X = 1 \right\},
		\end{align*}
		where the last inequality follows from the fact that if $\norm{f}_X = 1$, then $\distE{f, X_a} \leqslant 1$.
		Therefore, we only need to prove the reverse inequality. To do this, take $f \in X \setminus X_a$ and let
		$\{ f_n \}_{n=1}^\infty \subset X_a$ be a sequence that realizes the distance of the function $f$ from the ideal $X_a$,
		that is to say, $\lim_{n\rightarrow\infty} \norm{f - f_n}_X = \distE{f, X_a}$. Now, since $S(f) = S(f - f_n)$, so
		\begin{equation*}
		S(f) \leqslant \norm{S}_{X^{*}} \norm{f - f_n}_X \rightarrow \norm{S}_{X^{*}} \distE{f, X_a} \text{ as } n \rightarrow \infty.
		\end{equation*}
		Thus, we obtain that
		\begin{equation*}
		\norm{S}_{X^{*}} \geqslant \frac{S(f)}{\distE{f,X_a}},
		\end{equation*}
		which ends the proof.
	\end{proof}

	Note that the above lemma for Orlicz--Lorentz spaces was proved in \cite{KLT19} basing on the modular space structure.
	
	Consequently, if there exists a norm attaining singular functional on $X$, say $S$, then $S(f) = \norm{S}_{X^*}$ for some $f \in B(X)$,
	i.e., $\distE{f, X_a} = 1$ (cf. Lemma~\ref{Lemat z funkcjonalem singularnym}) and the space $X$ contains a lattice 
	isometric copy of $\ell_\infty$ in view of Theorem \ref{Hudzik w druga strone}. What we have said so far may be summarized as follows.
	
	\begin{proposition} \label{Funkcjonal singularny a kopia}
		{\it Let $X$ be a super $\sigma$-Dedekind complete Banach lattice with the semi-Fatou property such that $(X_a^d)^d = X$.
			 Then $X$ contains a lattice isometric copy of $\ell_\infty$ if and only if there exists a singular functional
			 on $X$ which attains its norm.}
	\end{proposition}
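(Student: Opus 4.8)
The plan is to prove both implications by routing through the existence of a norm-one element realizing its distance from $X_a$, which is exactly the quantity controlled by Theorem~\ref{Twierdzenie Hudzika}, Theorem~\ref{Hudzik w druga strone} and Lemma~\ref{Lemat z funkcjonalem singularnym}.

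First I would treat the necessity. Suppose $X$ contains a lattice isometric copy of $\ell_\infty$. By (the argument behind) Theorem~\ref{Hudzik w druga strone} --- which rests on Hudzik's Theorem~1 from \cite{Hu98} and remains valid under the present hypotheses --- there is an $f \in X$ with $\norm{f}_X = \distE{f, X_a} = 1$. Passing to the quotient, the very definition of the quotient norm gives $\norm{Q(f)}_{X/X_a} = \distE{f, X_a} = 1$, where $Q \colon X \to X/X_a$ is the canonical map. I would then invoke the Hahn--Banach theorem on $X/X_a$ to produce $\Phi \in S((X/X_a)^*)$ with $\Phi(Q(f)) = 1$, and pull it back by setting $S \coloneqq \Phi \circ Q$. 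Under the standard isometric identification $(X/X_a)^* \cong X_a^\perp = X^s$ the functional $S$ is singular, has $\norm{S}_{X^*} = 1$ and satisfies $S(f) = 1 = \norm{S}_{X^*}\norm{f}_X$; hence $S$ attains its norm.

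Next I would prove sufficiency. Assume that some singular functional $S$ on $X$ attains its norm; after normalising we may take $\norm{S}_{X^*} = 1$, so that $S(f) = 1$ for some $f \in B(X)$. Since $S$ is singular it vanishes on $X_a$, so necessarily $f \notin X_a$; moreover $1 = S(f) \leqslant \norm{S}_{X^*}\norm{f}_X \leqslant 1$ forces $\norm{f}_X = 1$. Feeding this $f$ into Lemma~\ref{Lemat z funkcjonalem singularnym} yields $1 = \norm{S}_{X^*} \geqslant S(f)/\distE{f, X_a} = 1/\distE{f, X_a}$, whence $\distE{f, X_a} \geqslant 1$; as $0 \in X_a$ always gives $\distE{f, X_a} \leqslant \norm{f}_X = 1$, we conclude $\distE{f, X_a} = \norm{f}_X = 1$. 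Theorem~\ref{Twierdzenie Hudzika} now delivers a lattice isometric copy of $\ell_\infty$ in $X$.

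The routine heart of the argument is elementary; the one point that requires genuine care is the dual identification $(X/X_a)^* \cong X^s$ used in the necessity part. I would need to check that it is not merely linear but isometric (so that $\norm{S}_{X^*} = \norm{\Phi}_{(X/X_a)^*} = 1$) and that the annihilator $X_a^\perp$ coincides with the space of singular functionals --- the latter being immediate from the paper's definition of $X^s$, while the former is the standard duality between quotients and annihilators. A secondary caveat is to make sure the necessity half, formulated in \cite{Hu98} and Theorem~\ref{Hudzik w druga strone} for Banach ideal spaces, transfers to the super $\sigma$-Dedekind complete setting with $(X_a^d)^d = X$; this is exactly the generality in which Hudzik's Theorem~1 is stated, so no new difficulty arises.
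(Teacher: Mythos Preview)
Your proposal is correct and follows essentially the same route as the paper: both directions pass through an element $f$ with $\norm{f}_X = \distE{f,X_a} = 1$, obtained via Theorem~\ref{Hudzik w druga strone} (resp.\ Theorem~\ref{Twierdzenie Hudzika}) on one side and via Lemma~\ref{Lemat z funkcjonalem singularnym} on the other, with the Hahn--Banach step through the identification $(X/X_a)^* \cong X_a^\perp = X^s$ supplying the norm-attaining singular functional. Your version is in fact more carefully spelled out than the paper's brief prose argument preceding the proposition, and your caveats about the isometric nature of the duality and the Banach-lattice versus Banach-ideal-space generality are exactly the right points to flag.
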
	
	
	We will now (seemingly) deviate for a moment from the course choosen so far. In \cite{GH96} (see also \cite{Wn84}; cf.~\cite{SS83}) Granero and Hudzik extended
	and completed the results obtained in \cite{LW83} for the space $\ell_\infty/c_0$\footnote{As mentioned in \cite{LW83}: {\it The Banach space $\ell_\infty/c_0$
			certainly falls into the category of a \enquote{classical Banach space}. Not only has it been around since the time of Banach's original
			monograph (1932), but it is also classical in the sense of Lacey or Lindenstauss and Tzafriri since it is isomorphic to the space
			$C(\beta\mathbb{N}\setminus\mathbb{N})$} (just to be clear $\beta\mathbb{N}$, as usual, is the {\v C}ech--Stone compactification of $\mathbb{N}$ endowed
			with the discrete topology, that is, $\beta\mathbb{N}$ is the unique compact Hausdorff space containing $\mathbb{N}$ as a dense subspace
			so that every bounded continuous function on $\mathbb{N}$ extends to a continuous function on $\beta\mathbb{N}$); we also recommend taking a look at\cite{DR91}.}
	considering a more general construction $\ell_{F}/(\ell_{F})_a$, where $\ell_{F}$ is the Orlicz sequence space (in fact, they even
	allowed for $\ell_{F}$ to be a Fr{\'e}chet space). Of course, we can go a step futher and consider a {\it neo-classical} Banach
	space $X/X_a$, where $X$ is a Banach lattice. Despite this somewhat baroque name, it is clear that $X/X_a$ is sometimes {\it classical},
	for example, if $X$ is a rearrangement invariant Banach sequence space with the Fatou property such that $\varphi_X(\infty) < \infty$,
	then $X / X_a = \ell_\infty / c_0 \approx C(\beta\mathbb{N} \setminus \mathbb{N})$.
	
	\begin{proposition}
		{\it Let $X$ be a Banach sequence space such that $X \hookrightarrow \ell_\infty$, the ideal $X_a$ is non-trivial and $X_a \neq X$.
			Then $X/X_a$ is not a dual space.}
	\end{proposition}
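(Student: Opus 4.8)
The plan is to exhibit in $Z := X/X_a$ an order-bounded increasing sequence with no supremum, i.e.\ to show that $Z$ is \emph{not Dedekind complete}, and then to invoke that a Banach lattice isomorphic to a dual Banach space must be Dedekind complete. The latter is classical: a dual Banach lattice $E^*$ is Dedekind complete, since an order-bounded increasing net has a weak$^*$-cluster point which, the positive cone of $E^*$ being weak$^*$-closed, is its supremum (cf.~\cite{MN91}); moreover Dedekind completeness is preserved by order isomorphisms, and a Banach lattice isomorphic as a Banach space to a dual space is order isomorphic to a dual Banach lattice (the order can be transferred, see \cite{MN91}). Note that $Z$ is a nonzero Banach lattice (because $X_a \neq X$) and that $Q \colon X \to Z$ is a lattice homomorphism onto it.

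First I would identify the quotient norm. Because every finitely supported element of a Banach sequence space is order continuous and, conversely, any element approximable by finitely supported ones has vanishing tails, we have $X_a = \overline{c_{00}}$; hence for $f \in X$
\[
\norm{[f]}_Z = \distE{f,X_a} = \lim_{n\to\infty}\norm{f\chi_{\{n,n+1,\dots\}}}_X =: \rho(f),
\]
a lattice seminorm on $X$ whose kernel is exactly $X_a$. In particular $f \notin X_a$ precisely when its tails stay bounded away from $0$. The construction then rests on the following splitting lemma: \emph{if $f \in X_+\setminus X_a$ then $\supp(f)$ can be partitioned into infinitely many infinite sets $A_1, A_2, \dots$ with $f\chi_{A_k}\notin X_a$ for all $k$.} Granting it, fix such an $f$ and set $s_n := f\chi_{A_1\cup\dots\cup A_n}$; then $0 \leqslant [s_n]\uparrow$ and $[s_n] \leqslant [f]$, so the sequence is order bounded.

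I claim $\sup_n[s_n]$ does not exist. Suppose $[g]$ were its least upper bound; replacing $g$ by $(0\vee g)\wedge f$ we may assume $0 \leqslant g \leqslant f$. From $[g]\geqslant[s_n]$ we get $(f-g)\chi_{A_k}\in X_a$ for every $k$, and therefore $g\chi_{A_k}=f\chi_{A_k}-(f-g)\chi_{A_k}\notin X_a$ for every $k$. Now choose a ``diagonal'' set $D$ meeting each block with $D\cap A_k$ finite but $g\chi_D\notin X_a$. Then $[g\chi_{\mathbb N\setminus D}]$ is still an upper bound of $([s_n])$ --- indeed $(s_n-g\chi_{\mathbb N\setminus D})^+$ equals $(f-g)\chi_{A_1\cup\dots\cup A_n}$ up to a finitely supported term, hence lies in $X_a$ --- while $g-g\chi_{\mathbb N\setminus D}=g\chi_D\notin X_a$ shows $[g\chi_{\mathbb N\setminus D}]<[g]$, contradicting minimality. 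Thus $Z$ is not Dedekind complete and, by the first paragraph, not a dual space. This is the abstract form of the familiar non-completeness of the Boolean algebra $\mathcal P(\mathbb N)/\mathrm{fin}$ underlying $\ell_\infty/c_0$.

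Two points carry the real weight. The first is the splitting lemma, and here the ideal (solidity) property of $X$ is indispensable: it fails for the non-solid space $c$ of convergent sequences, where $c/c_0\cong\mathbb R$ is a one-dimensional, hence dual, space. I would deduce it from the fact that the set ideal $\mathcal I_f=\{B\subseteq\mathbb N\colon f\chi_B\in X_a\}$ is a proper, Baire-measurable ideal containing the finite sets; as a non-principal ultrafilter is never Baire-measurable, $\mathcal I_f$ cannot be maximal, so $f$ splits into two non-order-continuous pieces, and iteration yields the partition (a direct, ultrafilter-free alternative peels off disjoint tail-pieces using the countable subadditivity of $\rho$). The second, and genuinely technical, point is producing the diagonal $D$: one must select, cofinally in each block, finite chunks of $g$ whose $X$-norms do not decay, which is delicate precisely when $\norm{\cdot}_X$ lacks the Fatou property. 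This is where $X\hookrightarrow\ell_\infty$ enters and where, in the concrete cases $\ell_\infty/c_0$ and $\ell_F/(\ell_F)_a$, one can follow the analyses of \cite{LW83}, \cite{GH96} and \cite{Wn84}. I expect this diagonal bookkeeping to be the main obstacle.
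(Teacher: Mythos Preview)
Your approach has a serious gap at its foundation. The proposition asserts that $X/X_a$ is not a dual \emph{Banach space}---not linearly isomorphic to $Y^*$ for any Banach space $Y$---and the paper's own proof confirms this reading (it invokes Rosenthal's theorem and a cardinality count, both purely Banach-space arguments). Your plan rests on the claim that a Banach lattice which is Banach-space-isomorphic to a dual space must be Dedekind complete, justified by ``the order can be transferred''. But transferring the lattice order along an isomorphism $T\colon Z \to F^*$ merely makes $(F^*, T\text{-order})$ lattice-isomorphic to $Z$; it does \emph{not} turn $F^*$ into a dual Banach lattice in the sense needed, which would require $F$ itself to be a Banach lattice and $F^*$ to carry the induced dual order. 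The Dedekind-completeness theorem you cite from \cite{MN91} is about $G^*$ for $G$ a Banach lattice and simply does not apply here. Dedekind completeness is a lattice property, not a Banach-space-isomorphism invariant, so establishing that $X/X_a$ fails it cannot by itself rule out $X/X_a$ being a dual Banach space. (Separately, you yourself concede that the construction of the diagonal set $D$---needed already for the Dedekind-incompleteness step---is left unfinished.)

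The paper's argument is entirely different and short: $X/X_a$ contains an isomorphic copy of $c_0(\Gamma)$ with $\text{card}(\Gamma)=\mathfrak{c}$, obtained from the standard almost-disjoint family of infinite subsets of $\mathbb{N}$ (exactly the construction that embeds $c_0(\mathfrak{c})$ into $\ell_\infty/c_0$). Were $X/X_a$ a dual space, Rosenthal's extension \cite[Corollary~1.5]{Ro70} of the Bessaga--Pe\l{}czy\'nski theorem would then force $\ell_\infty(\Gamma)\hookrightarrow X/X_a$, which is impossible on cardinality grounds:
\[
\text{card}(\ell_\infty(\Gamma))=2^{\mathfrak{c}}>\mathfrak{c}=\text{card}(\ell_\infty)\geqslant\text{card}(X)\geqslant\text{card}(X/X_a),
\]
the penultimate inequality using $X\hookrightarrow\ell_\infty$. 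Your observation that $X/X_a$ is not Dedekind complete (mirroring $\mathcal{P}(\mathbb{N})/\text{fin}$) is correct and interesting in its own right, but it proves only that $X/X_a$ is not a dual \emph{Banach lattice}, which is a weaker statement than the one claimed.
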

	\begin{proof}
		It is well-known that the space $\ell_\infty/c_0$ contains an isomorphic copy of $c_0(\Gamma)$, where $\text{card}(\Gamma) = \mathfrak{c}$
		and $\mathfrak{c}$ denotes the cardinal of the continuum, that is, $\mathfrak{c} = 2^{\aleph_0}$.
		Let us briefly remind that we can proceed as follows: (1) for each irrational number $r$, take a sequence $\{ q_n^r \}_{n=1}^\infty$
		of distinct rational numbers converging to $r$ and put $F_r = \{ q_n^r \colon n \in \mathbb{N}\}$; (2) the sets $\{ F_r \}_{r \in \mathbb{R}}$
		form an uncountable family of infinite sets such that $F_r \cap F_{r'}$ is finite whenever $r \neq r'$; (3) observe that $\ell_\infty \equiv \ell_\infty(\mathbb{Q})$;
		(4) the closed linear span of $\{ \pi(\chi_{F_r}) \colon r \in \mathbb{R} \}$, where $\pi \colon \ell_\infty \rightarrow \ell_\infty/c_0$ is
		the cannonical quotient map, is isometric to $c_0(\mathbb{R})$ (cf. footnote on page 19 in \cite{Ro70}).
		Now, if $X/X_a$ were a dual space, then thanks to Rosenthal's generalization of the classical Bessaga and Pe{\l}czy{\'{n}}ski result
		\cite[Corollary 1.5]{Ro70}, the space $X/X_a$ should have a copy of $\ell_\infty(\Gamma)$. But this is impossible, because
		\begin{equation*}
		\text{card}(\ell_\infty(\Gamma)) = 2^\mathfrak{c} > \mathfrak{c} = \text{card}(\ell_\infty) \geqslant \text{card}(X) \geqslant \text{card}(X/X_a),
		\end{equation*}
		where the second inequality follows from the fact that $X \hookrightarrow \ell_\infty$.
	\end{proof}

	The above result is a direct generalization of \cite[Proposition~6.2]{GH96} which was proved in the case when $X = \ell_F$.

	Let us now recall the well-known Phillips--Sobczyk theorem, which states that $c_0$ is not complemented in $\ell_\infty$ (to be precise,
	what Phillips proved was that $c$ is not complemented in $\ell_\infty$; look at \cite{CSCY00}, cf.~\cite[pp.~44--48]{AK06}). We will close this
	paragraph by re-proving a \enquote{lattice} variant of this theorem and exploring the Bourgain's approach \cite{Bo80} to this phenomenon.
	
	\begin{proposition}[G. Ja. Lozanovski{\u \i}, 1973]
		{\it Let $X$ be a Banach ideal space on a complete $\sigma$-finite measure space $(\Omega, \Sigma, \mu)$ such that $X_a \neq X$.
			Then the subspace $X_a$ is not complemented in $X$ except for the case when $X_a$ is trivial.}
	\end{proposition}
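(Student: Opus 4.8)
The plan is to argue by contradiction: assuming a bounded projection $P \colon X \to X_a$ with $\norm{P} = \lambda$ exists (here the hypothesis $X_a \neq \{0\}$ is essential, since for $X_a = \{0\}$ the zero map already is such a projection), I would transplant the classical Phillips--Sobczyk phenomenon recalled above into $X$ by exhibiting an $\ell_\infty$-pattern built from a disjoint sequence and showing that $P$ would then force an element of $X_a$ with persistently large bands, which is absurd. Since $X_a \neq X$, the space $X$ is not order continuous, so by the standard characterisation of order continuity there are a positive $f \in X$ and a pairwise disjoint sequence $\{h_n\}_{n=1}^\infty$ with $0 \leqslant h_n \leqslant f$, $\norm{h_n}_X \geqslant \delta > 0$ and supports $E_n \coloneqq \supp(h_n)$. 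For an infinite $A \subseteq \mathbb{N}$ put $h_A \coloneqq \sup_{n \in A} h_n \leqslant f$, so that $h_A \in X$ with $\norm{h_A}_X \leqslant \norm{f}_X =: M$; the same band estimate used in the proof of Theorem~\ref{Hudzik w druga strone} (comparing $(h_A - u)\chi_{E_n}$ with $h_n$ and using $\norm{u\chi_{E_n}}_X \to 0$ for $u \in X_a$) yields $\distE{h_A, X_a} \geqslant \delta$ for every infinite $A$.

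Next I would fix an uncountable almost disjoint family $\{A_i\}_{i \in I}$ of infinite subsets of $\mathbb{N}$ with $\text{card}(I) = \mathfrak{c}$, and set $g_i \coloneqq P(h_{A_i}) \in X_a$. Because the bands $E_n$ are pairwise disjoint and $g_i \in X_a$, we have $\norm{g_i \chi_{E_n}}_X \to 0$ as $n \to \infty$, so the set $S_i \coloneqq \{n \colon \norm{g_i\chi_{E_n}}_X \geqslant \delta/2\}$ is finite for each $i$. Since there are only countably many finite subsets of $\mathbb{N}$, the map $i \mapsto S_i$ has an uncountable fibre, which produces an integer $N$ and an uncountable subfamily $J \subseteq I$ such that $\norm{g_i\chi_{E_n}}_X < \delta/2$ for all $i \in J$ and all $n > N$. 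This is the lattice analogue of the concentration step in Bourgain's proof: the ``$X_a$-part'' of every $h_{A_i}$ with $i \in J$ is uniformly concentrated on the first $N$ bands.

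The final, and hardest, step is the disjointification. Choosing $i_1, i_2, \ldots \in J$ and truncating each $A_{i_k}$ to a tail $A_{i_k}' \subseteq (t_k, \infty)$ with $t_1 < t_2 < \cdots$ large enough that the sets $A_{i_k}'$ become pairwise disjoint (possible because the pairwise intersections are finite), the elements $v_k \coloneqq h_{A_{i_k}'}$ have disjoint supports $F_k$, so $z_\theta \coloneqq \sum_k \theta_k v_k$ satisfies $\abs{z_\theta} \leqslant f$ and hence $\norm{z_\theta}_X \leqslant M$ for every choice of signs $\theta = \{\theta_k\}$, while still $\distE{z_\theta, X_a} \geqslant \delta$. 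One then wants to combine a Rosenthal-type disjointification of the band masses of $\{P v_k\}$ with the concentration estimate of the previous paragraph to select signs for which $P z_\theta$ has band norms bounded below along an infinite set of indices, contradicting $P z_\theta \in X_a$. I expect this disjointification to be the main obstacle: unlike in $\ell_\infty$, where the norm of a band is a single coordinate and the total variation of the relevant functionals is automatically controlled by $\norm{P}$, in a general Banach lattice the quantities $\sum_k \norm{(Pv_j)\chi_{F_k}}_X$ need not be summable, so the off-diagonal mass has to be tamed by hand, using the uniform concentration on the first $N$ bands together with the disjointness of the $F_k$. An alternative that sidesteps the combinatorics is to observe that $\{h_A\}$ spans a lattice copy $E \cong \ell_\infty$ inside $X$, with canonical $c_0$-subspace $\overline{\operatorname{span}}\{h_n\}$, which is complemented because $\ell_\infty$ is isometrically injective, and then to check that $P$ induces a bounded projection of $E$ onto this $c_0$-subspace so that the Phillips--Sobczyk theorem recalled above applies; in that route the only delicate point is to guarantee that the $c_0$-subspace actually sits inside $X_a$.
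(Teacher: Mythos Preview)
Your proposal is not a complete proof: you explicitly flag the disjointification step as ``the main obstacle'' and never resolve it, and indeed in a general Banach ideal space there is no reason the band masses $\sum_k \norm{(Pv_j)\chi_{F_k}}_X$ should be controllable in the way a Phillips-type sign-selection argument requires. Your alternative route at the end is closer to viable, but it too has a gap: even granting that the $c_0$-subspace $\overline{\operatorname{span}}\{h_n\}$ lies in $X_a$ (this \emph{is} true, and is a theorem of W{\'o}jtowicz cited in the paper), it does not follow that the given projection $P$ induces a projection of $E$ onto that $c_0$-subspace. The map $P$ fixes $c_0$ and lands in $X_a$, but $X_a$ is much larger than the $c_0$-part of $E$; composing afterwards with an injectivity-projection $R \colon X \to E$ yields $RP|_E$, which fixes $c_0$ but is not idempotent, so you have not produced a bounded projection of $\ell_\infty$ onto $c_0$ and cannot invoke Phillips--Sobczyk.

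The paper's argument avoids this combinatorics entirely and proceeds by a renorming obstruction. One uses the lattice embedding $T \colon \ell_\infty \hookrightarrow X$ together with $T(c_0) \subset X_a$ to factor the quotient maps and obtain an isomorphic copy of $\ell_\infty/c_0$ inside $X/X_a$. Separately, one checks that $X$ itself admits an equivalent strictly convex norm (by injecting $X$ into $L_1(\mu)$ via multiplication by a strictly positive $y \in X'$ and pulling back a strictly convex norm). Bourgain's theorem says $\ell_\infty/c_0$ admits no equivalent strictly convex norm; hence $X/X_a$ cannot embed in $X$, and so $X_a$ cannot be complemented. Compared with your approach, this replaces the delicate sign-selection and band-mass bookkeeping by a single global invariant (strict-convex renormability), at the cost of importing Bourgain's nontrivial result. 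If you want to salvage your alternative, the correct continuation is exactly this quotient-level argument rather than an attempt to manufacture a projection of $E$ onto its $c_0$-part.
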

	\begin{proof}
		Suppose that $X$ is as above and consider the following diagram
		\begin{equation*}
		c_0 \hookrightarrow \ell_\infty \overset{T}{\longrightarrow} X \overset{Q}{\longrightarrow} X / X_a,
		\end{equation*}
		where $T$ is a lattice isomorphism and $Q$ is the canonical quotient map. It follows that $T(c_0) \subset X_a$
		(see Proposition~1 and Theorem~1 in \cite{Wo05}; cf. \cite{PW07}). Thus, if $q \colon \ell_\infty \rightarrow \ell_\infty / c_0$
		is a quotient mapping, then there exist a unique mapping $U \colon \ell_\infty / c_0 \rightarrow X / X_a$ defined in the following
		way $U \colon x + c_0 \mapsto T(x) + X_a$. It is not hard to see that $QT = Uq$. In short, the following diagram commutes
		\begin{equation*}
			\begin{tikzcd}
			\ell_\infty \arrow[r, "T"]
			\arrow[d, "q"]
			& X \arrow[d, "\text{Q}"] \\
			\ell_\infty / c_0 \arrow[r, "U"]
			& X / X_a
			\end{tikzcd}
		\end{equation*}
		and the space $X / X_a$ contains a lattice isomorphic copy of $\ell_\infty / c_0$. Now, observe that $X$ can be renormed to be a strictly
		convex space. In fact, let us consider the multiplication operator $M_y$ given by $M_y \colon X \ni x \mapsto xy \in L_1(\mu)$,
		where $y \in X'$ is chosen so that $\norm{y}_{X'} = 1$ and $y(t) > 0$ for $\mu$-a.e. $t \in \Omega$. Then we can define the functional
		$\sharp \cdot \sharp$ on $X$ as follows
		\begin{equation*}
		\sharp x \sharp = \norm{x}_X + \norm{M_y(x)}_{L_1(\mu)},
		\end{equation*}
		Now, since the space $\ell_\infty$ is universal for all separable Banach spaces (see \cite[Theorem~2.5.7, p.~46]{AK06})
		and admit strictly convex renorming (see \cite[p.~94]{Di76}), so without losing generality we can assume that $L_1(\mu)$
		is already strictly convex; another words, we can always equip the space $L_1(\mu)$ with a new norm, say $\abs{\cdot}$,
		such that the space $(L_1(\mu), \abs{\cdot})$ is strictly convex. But this gives that
		$\sharp \cdot \sharp$ is an equivalent norm on $X$ and that the space $(X, \sharp \cdot \sharp)$ is strictly convex as well (this follows
		from the well-known fact that if $T \colon X \rightarrow Y$ is an injective operator and $Y$ is strictly convex Banach space then $X$
		can be renormed to be also strictly convex; see \cite[p. 94]{Di76} for more details).
		On the other hand, Bourgain \cite{Bo80} proved that the space $\ell_\infty/c_0$ does not admit an equivalent strictly
		convex norm while, as we showed above, $X$ does. Consequently, the space $X/X_a$, which contains an isomorphic copy of $\ell_\infty / c_0$,
		is not isomorphic to a subspace of $X$ and this implies that $X_a$ is not complemented in $X$.
	\end{proof}

	Another proof of this fact can be found, for example, in \cite[Theorem~1.21, pp.~62--63]{Wn99}.
	
	Note also that Partington \cite[Theorem~1]{Pa81} proved that the space $X / X_a$ not only cannot be renormed to be strictly convex, but even
	contains a lattice isometric copy of $\ell_\infty$ (cf. \cite[Proposition~5.1]{GH96} and \cite[Theorem~2]{Wo05}).

\section{Some properties of the norm in the quotient space}

	It is clear that the quotient space $X/M$, where $X$ is a normed space and $M$ is a closed subspace of $X$, is also a normed space
	with respect to the norm $\norm{\cdot}_{X/M}$ defined as
	\begin{equation*}
	\norm{x + M}_{X/M} \coloneqq \inf\{ \norm{y}_X \colon y \in x + M \}.
	\end{equation*}
	In simple words, this norm measures the distance from a coset to the origin of the space $X/M$, that is
	\begin{equation*}
	\norm{x + M}_{X/M} = \distE{x + M, 0 + M} = \distE{x, M} \coloneqq \inf\{ \norm{x - m}_X \colon m \in M \}.
	\end{equation*}
	Therefore, the study of the function $x \mapsto \distE{x, M}$ is parallel to the study of certain properties of the quotient space $X/M$
	and vice versa. For example, de Jonge \cite{deJ77} proved that a Banach lattice $X$ is a {\it semi-$M$ space} (without going into detail,
	this class includes Orlicz spaces $L_F$ with both the Orlicz and the Luxemburg--Nakano norm and Lorentz spaces $\Lambda_\varphi$;
	see \cite[Examples~2.4 and 9.6]{deJ77}) if and only if $X/X_a$ is an $AM$-space (or, which is one thing, $X^s$ is an $AL$-space),
	that is to say,
	$$\distE{ \sup\{ x, y \}, X_a} = \max\{ \distE{x, X_a}, \distE{y, X_a} \} \text{ for } x, y \in X_+.$$
	
	From this perspective, we will show that if $X$ is a rearrangement invariant Banach space, then the quotient space $X/X_a$ is,
	in a sense, also \enquote{rearrangement invariant} (Theorem~\ref{fgwiazdka}). This result will turn out to be crucial in the next section,
	where we will deal with abstract Ces\`aro spaces looking for isometric copies of $\ell_\infty$. But let's start with some
	auxiliary lemmas.

	\begin{lemma} \label{lemma 1 distance}
		{\it Let $X$ be a Banach ideal space on $(\Omega, \Sigma, \mu)$ and let $J$ be an order ideal of $X$. If $f \in X$ then we have
			the following formula}
			\begin{equation*} \label{Lemma distance to an order ideal}
				\distE{f, J} = \inf\{ \norm{f-g}_X \colon g\in J \text{ {\it with} } \abs{g} \leqslant \abs{f} \}.
			\end{equation*}
	\end{lemma}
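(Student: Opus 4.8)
The plan is to prove the two inequalities separately, with all the real content concentrated in a single truncation step. One direction is free: since the family $\{g \in J \colon \abs{g} \leqslant \abs{f}\}$ is contained in $J$, the infimum on the right-hand side is taken over a smaller set and is therefore automatically at least $\distE{f,J}$. Hence the inequality $\distE{f,J} \leqslant \inf\{\norm{f-g}_X \colon g \in J \text{ with } \abs{g} \leqslant \abs{f}\}$ needs no argument at all, and the whole problem reduces to the reverse estimate. For that it suffices to show that every competitor $g \in J$ in the definition of $\distE{f,J}$ can be replaced by a competitor $g'$ obeying the extra constraint $\abs{g'} \leqslant \abs{f}$ without increasing $\norm{f-g}_X$.

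To manufacture such a $g'$, I would truncate $g$ against $\abs{f}$ pointwise: for fixed $f, g \in X$ define $g' = \max\{-\abs{f}, \min\{g, \abs{f}\}\}$, which amounts, at $\mu$-almost every point $x$, to projecting the number $g(x)$ onto the interval $[-\abs{f(x)}, \abs{f(x)}]$. By construction $\abs{g'} \leqslant \abs{f}$, so $g'$ lies in the restricted family, and simultaneously $\abs{g'} \leqslant \abs{g}$ pointwise; since $J$ is an order ideal (solid) and $g \in J$, this forces $g' \in J$. Thus $g'$ is a legitimate competitor on the right-hand side.

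The remaining point, and the only place where I expect even a mild obstacle, is to verify that the truncation does not increase the distance, i.e.\ $\norm{f-g'}_X \leqslant \norm{f-g}_X$. Because $X$ has the ideal property, it is enough to establish the pointwise inequality $\abs{f-g'} \leqslant \abs{f-g}$ $\mu$-almost everywhere. The cleanest way to see this is to note that, for each real $a$, the metric projection $P_a \colon \mathbb{R} \to [-\abs{a}, \abs{a}]$ onto the closed interval is nonexpansive and fixes $a$ itself; applying it with $a = f(x)$, so that $g'(x) = P_a(g(x))$ while $f(x) = P_a(f(x))$, yields
\[
\abs{f(x) - g'(x)} = \abs{P_a(f(x)) - P_a(g(x))} \leqslant \abs{f(x) - g(x)}
\]
for $\mu$-a.e.\ $x$. (Anyone preferring to avoid projections can reach the same inequality by a short case analysis on the signs of $f(x)$ and $g(x)$ and on whether $\abs{g(x)}$ exceeds $\abs{f(x)}$.) Combining the three properties of $g'$ gives $\inf\{\norm{f-g}_X \colon g \in J \text{ with } \abs{g} \leqslant \abs{f}\} \leqslant \norm{f-g'}_X \leqslant \norm{f-g}_X$ for every $g \in J$; taking the infimum over $g \in J$ delivers the reverse inequality and completes the proof.
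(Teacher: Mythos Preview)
Your proof is correct and follows essentially the same truncation strategy as the paper: replace an arbitrary competitor $g\in J$ by one whose modulus is dominated by $\abs{f}$, and check that this does not increase $\norm{f-g}_X$. The only cosmetic difference is that the paper first reduces to $f>0$ (and tacitly to $g\geqslant 0$) and then truncates via $\widetilde{g}=\min(g,f)$, whereas you handle arbitrary signs in one stroke by projecting onto $[-\abs{f},\abs{f}]$ and invoking nonexpansiveness of the metric projection; your packaging is slightly cleaner but the underlying idea is the same.
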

	\begin{proof}
		Without loss of generality we can assume that $f > 0$. Of course, to show the above equality we need only to prove that
		\begin{equation*}
			\inf\{ \norm{f-g}_X \colon g\in J \text{ such that } 0 \leqslant g \leqslant f \} \leqslant \distE{f, J}.
		\end{equation*}
		Let $\{ f_n \}_{n=1}^\infty \subset J^+$ be a sequence realizing the distance, that is,
		$\lim_{n\rightarrow\infty}\norm{f - f_n}_X = \distE{f, J}$.
		Set
		\begin{equation*}
			\Omega_n = \{ x \in \Omega \colon f_n(x) \leqslant f(x) \} \text{ for } n \in \mathbb{N},
		\end{equation*}
		and put
		\begin{equation*}
			\widetilde{f}_n = f_n\chi_{\Omega_n} + f\chi_{\Omega \setminus \Omega_n} \text{ for } n \in \mathbb{N}.
		\end{equation*}
		Observe that $\widetilde{f}_n \leqslant f$ for every $n\in\mathbb{N}$. Consequently,
		\begin{equation*}
			\norm{f - \widetilde{f}_n}_X = \norm{(f - f_n)\chi_{\Omega_n}}_X \leqslant \norm{\abs{f - f_n}}_X = \norm{f - f_n}_X.
		\end{equation*}
		Taking limits on both sides of the above inequality, we see immediately that
		\begin{equation*}
			\lim\limits_{n\rightarrow\infty}\norm{f - \widetilde{f}_n}_X \leqslant \lim\limits_{n\rightarrow\infty}\norm{f - f_n}_X = \distE{f, J}.
		\end{equation*}
		Moreover, since $J$ is an order ideal of $X$, it follows that $\{ \widetilde{f}_n \}_{n=1}^\infty\subset J$ and the proof is finished.
	\end{proof}

	The immediate conclusion from the above lemma is the following

	\begin{corollary}\label{colnierownosc}
		{\it Let $X$ be a Banach ideal space on $(\Omega, \Sigma, \mu)$ and let $J$ be an order ideal of $X$. If $0 \leqslant g \leqslant f \in X$ then}
		\begin{equation*}
			\distE{g, J} \leqslant \distE{f, J}.
		\end{equation*}
	\end{corollary}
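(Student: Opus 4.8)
The plan is to read off the inequality directly from Lemma~\ref{lemma 1 distance}, whose whole point is to let us compute $\distE{f, J}$ using only elements of $J$ that sit below $f$. First I would reduce to the case $f \geqslant 0$, which costs nothing since $g \geqslant 0$ already forces $f \geqslant g \geqslant 0$. By Lemma~\ref{lemma 1 distance} (and the construction carried out in its proof, which produces competitors $\widetilde{f}_n$ satisfying $0 \leqslant \widetilde{f}_n \leqslant f$) we may write
\begin{equation*}
	\distE{f, J} = \inf\{ \norm{f - h}_X \colon h \in J \text{ with } 0 \leqslant h \leqslant f \}.
\end{equation*}

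Next, for a fixed admissible $h$ appearing in the infimum above I would truncate it against $g$ by setting $h' \coloneqq g \wedge h$. Since $0 \leqslant h' \leqslant h \in J$ and $J$ is solid, we get $h' \in J$, so $h'$ is a legitimate competitor for $\distE{g, J}$. The crux is the pointwise identity $g - h' = g - (g \wedge h) = (g - h)^+$ together with the pointwise domination $0 \leqslant (g - h)^+ \leqslant f - h$: on the set where $g \geqslant h$ this is exactly the hypothesis $g \leqslant f$, while on the complementary set the left-hand side vanishes and $f - h \geqslant 0$ because $h \leqslant f$.

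Finally, the ideal property of the norm in $X$ turns this pointwise bound into $\norm{g - h'}_X = \norm{(g-h)^+}_X \leqslant \norm{f - h}_X$, whence $\distE{g, J} \leqslant \norm{g - h'}_X \leqslant \norm{f - h}_X$. Passing to the infimum over all admissible $h$ then yields $\distE{g, J} \leqslant \distE{f, J}$. I do not expect any genuine obstacle here; the only point demanding a little care is the justification that the competitors for $\distE{f, J}$ may be restricted to functions lying between $0$ and $f$, which is precisely the content of Lemma~\ref{lemma 1 distance}. (One could even bypass this reduction by taking $h' = g \wedge h^+$ for an arbitrary $h \in J$ and checking $(g - h^+)^+ \leqslant \abs{f - h}$ pointwise in the two cases $h \geqslant 0$ and $h < 0$, but invoking the lemma keeps the argument shortest.)
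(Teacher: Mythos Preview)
Your proof is correct and follows essentially the same approach as the paper's: both invoke Lemma~\ref{lemma 1 distance} to restrict the competitors for $\distE{f,J}$ to elements $h$ with $0\leqslant h\leqslant f$, then truncate via $h'=g\wedge h$ (the paper writes $g_n=\min\{g,f_n\}$ along a minimizing sequence) and use the ideal property of the norm on the pointwise bound $0\leqslant g-h'\leqslant f-h$. The only cosmetic difference is that you argue with the infimum directly while the paper passes through a sequence realizing the distance.
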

	\begin{proof}
		Take $0 < g \leqslant f \in X$. Let $\{ f_n \}_{n=1}^\infty \subset J$ be a sequence that realizes the distance, that is to say,
		$\lim_{n\rightarrow\infty} \norm{f - f_n} = \distE{f,J}$. It follows from Lemma \ref{lemma 1 distance} that we can additionally
		assume that $f_n \leqslant f$ for every $n\in\mathbb{N}$. Put
		\begin{equation*}
		g_n = \min\{ g, f_n \} \text{ for } n \in \mathbb{N}.
		\end{equation*}
		Since $J$ is an order ideal of $X$, it is clear that $\{ g_n \}_{n=1}^\infty \subset J$. Observe also that $0 < g - g_n \leqslant f - f_n$
		for all $n \in \mathbb{N}$, so due to the ideal property of $X$ we get that
		$$
		\norm{g - g_n}_X \leqslant \norm{f - f_n}_X.
		$$
		But this simply means that $\distE{g, J} \leqslant \distE{f, J}$.
	\end{proof}

	\begin{lemma} \label{modul}
		{\it Let $X$ be a Banach ideal space on $(\Omega, \Sigma, \mu)$ and let $J$ be an order ideal of $X$. If $f \in X$ then}
		\begin{equation*}
			\distE{f, J} = \distE{\abs{f}, J}.
		\end{equation*}
	\end{lemma}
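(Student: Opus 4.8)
The plan is to prove the two inequalities $\distE{f, J} \leqslant \distE{\abs{f}, J}$ and $\distE{\abs{f}, J} \leqslant \distE{f, J}$ separately, using only the solidity of $J$ together with two elementary pointwise estimates. Throughout I would write $\sign(f)$ for the measurable function equal to $f/\abs{f}$ on $\supp(f)$ and equal to $0$ on $\Omega \setminus \supp(f)$, so that $f = \sign(f)\abs{f}$ pointwise and $\abs{\sign(f)} \leqslant 1$ everywhere.

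The inequality $\distE{\abs{f}, J} \leqslant \distE{f, J}$ is the easy direction and does not even require the sign function. Fix any $g \in J$. Since $J$ is an order ideal and $\abs{\abs{g}} = \abs{g}$, also $\abs{g} \in J$. The reverse triangle inequality $\abs{\abs{f} - \abs{g}} \leqslant \abs{f - g}$, combined with the ideal property of $X$, gives $\norm{\abs{f} - \abs{g}}_X \leqslant \norm{f - g}_X$, whence $\distE{\abs{f}, J} \leqslant \norm{f - g}_X$. Taking the infimum over all $g \in J$ yields the claim.

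For the reverse inequality $\distE{f, J} \leqslant \distE{\abs{f}, J}$ I would use the sign to transport a competitor for $\abs{f}$ into a competitor for $f$. Fix $g \in J$ and set $h = \sign(f)\,g$. Because $\abs{h} = \abs{\sign(f)}\,\abs{g} \leqslant \abs{g}$ and $J$ is solid, we have $h \in J$. On $\supp(f)$ one computes $f - h = \sign(f)(\abs{f} - g)$, so that $\abs{f - h} = \abs{\abs{f} - g}$ there; on $\Omega \setminus \supp(f)$ the function $\sign(f)$ vanishes, hence $h = 0$ and $\abs{f - h} = 0 \leqslant \abs{g} = \abs{\abs{f} - g}$. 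In either case $\abs{f - h} \leqslant \abs{\abs{f} - g}$ pointwise, so the ideal property gives $\norm{f - h}_X \leqslant \norm{\abs{f} - g}_X$, and therefore $\distE{f, J} \leqslant \norm{\abs{f} - g}_X$. Passing to the infimum over $g \in J$ completes the proof.

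The only delicate point — and the step I would treat most carefully — is the behaviour of the construction on the set $\Omega \setminus \supp(f)$, where $\sign(f)$ degenerates to $0$. There the identity $\abs{f - h} = \abs{\abs{f} - g}$ may fail, but it weakens only to the harmless inequality $\abs{f - h} = 0 \leqslant \abs{\abs{f} - g}$, which is exactly what the argument needs. Note that this reasoning is self-contained and does not invoke Lemma~\ref{lemma 1 distance}: the membership $h \in J$ follows directly from solidity, and no constraint of the form $\abs{g} \leqslant \abs{f}$ is imposed on the competing elements.
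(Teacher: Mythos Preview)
Your proof is correct and follows essentially the same strategy as the paper: both directions rely on the sign of $f$ to transport competitors between $f$ and $\abs{f}$. The only difference is cosmetic: the paper first restricts to $g \in J_{+}$, defines $h = \pm g$ according to the sign of $f$ (taking $h = g$ on $\{f = 0\}$) to obtain the \emph{equality} $\norm{f-h}_X = \norm{\abs{f}-g}_X$, and then invokes $\distE{\abs{f},J} = \distE{\abs{f},J_{+}}$; you instead take $h = \sign(f)\,g$ for arbitrary $g \in J$, accept the harmless inequality on $\{f=0\}$, and thereby avoid the detour through $J_{+}$ altogether.
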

	\begin{proof}
		To show one of the inequalities let's take $f\in X$ and notice that
		\begin{align*}
		\distE{f, J} \geqslant \inf\{\norm{\abs{f} - \abs{g}}_X \colon g \in J \}
					   \geqslant \inf\{\norm{\abs{f} - h}_X \colon h \in J \} = \distE{\abs{f}, J}.
		\end{align*}
		
		On the other hand, for each $g \in J_{+}$ there is $h \in J$ such that
		$\norm{f - h}_X \leqslant \norm{\abs{f} - g}_X$. Indeed, for $g \in J_{+}$ set
		\[
		h \left( x \right) =\left\{ 
		\begin{array}{ccc}
		g\left( x \right)  & \text{if} & f\left( x\right) \geqslant 0, \\ 
		-g\left( x \right)  & \text{if} & f\left( x\right) < 0.
		\end{array}
		\right. 
		\]
		Then $h \in J$ and $\norm{f - h}_X = \norm{\abs{f} - g}_X$, whence $\distE{\abs{f}, J_{+}} \geqslant \distE{f, J}$.
		Since $\distE{\abs{f}, J} = \distE{\abs{f}, J_{+}}$ we conclude that $\distE{f, J} \leqslant \distE{\abs{f}, J}$
		and this finishes the proof.
	\end{proof}

	It will turn out right away that the formula expressing the distance of the non-increasing rearrangement of $f$ from the ideal $X_a$
	can be given in a fairly computable form.
	
	\begin{theorem} \label{thm: bardziej konstruktywna odleglosc}
		{\it Let $X$ be a rearrangement invariant space on $\Omega$, where $\Omega = (0,1)$, $\Omega = (0,\infty)$ or $\Omega = \mathbb{N}$,
			such that the ideal $X_a$ is non-trivial. Then for $f \in X$ we have the following equality}
		\begin{equation*}
			\distE{f^*,X_a} = \lim\limits_{n \rightarrow \infty} \norm{f^* \chi_{\Omega_n}}_X,
		\end{equation*}
		{\it where $\Omega_n = \Omega \cap \left((0, \frac{1}{n}) \cup (n,\infty)\right)$.}
	\end{theorem}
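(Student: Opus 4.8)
The plan is to prove the two matching inequalities, preceded by a reduction that turns the hypothesis ``$X_a$ is non-trivial'' into a workable statement about which functions are order continuous. First I would note that, since $f \in X$ and $X$ is rearrangement invariant, $f^* \in X$ with $f^* \geq 0$, non-increasing and finite on $\Omega \setminus \{0\}$, while the sets $\Omega_n$ decrease with $\bigcap_n \Omega_n = \emptyset$. Hence $f^*\chi_{\Omega_n}$ decreases and, by the ideal property, $\norm{f^*\chi_{\Omega_n}}_X$ is non-increasing, so the limit on the right-hand side exists and equals $\inf_n \norm{f^*\chi_{\Omega_n}}_X$. The crucial preliminary observation is that $X_a \neq \{0\}$ forces $\varphi_X(0^+) = 0$: if $\varphi_X(0^+) = c > 0$ then for any $0 \neq h \in X_+$ one picks $\lambda > 0$ and a nested sequence $A_n \downarrow \emptyset$ of positive finite measure on which $h \geq \lambda$, and equimeasurability gives $\norm{h\chi_{A_n}}_X \geq \lambda\varphi_X(\mu(A_n)) \to \lambda c > 0$, so $h \notin X_a$, a contradiction (in the sequence case this is vacuous, since every finitely supported sequence is automatically order continuous). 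Once $\varphi_X(0^+)=0$ is known, a short argument using $\norm{\chi_E}_X = \varphi_X(\mu(E))$ shows that every bounded function supported on a set of finite measure belongs to $X_a$.

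With this in hand the inequality $\distE{f^*, X_a} \leq \lim_n \norm{f^*\chi_{\Omega_n}}_X$ is immediate. On the complement $\Omega \setminus \Omega_n = \Omega \cap [\tfrac{1}{n}, n]$ the function $f^*$ is bounded (by $f^*(1/n) < \infty$) and is supported on a set of finite measure, so $f^*\chi_{\Omega\setminus\Omega_n} \in X_a$ by the previous paragraph. Consequently $\distE{f^*, X_a} \leq \norm{f^* - f^*\chi_{\Omega\setminus\Omega_n}}_X = \norm{f^*\chi_{\Omega_n}}_X$ for every $n$, and letting $n \to \infty$ gives the claim.

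For the reverse inequality I would fix an arbitrary $g \in X_a$ and combine the triangle inequality with the ideal property to obtain $\norm{f^*\chi_{\Omega_n}}_X \leq \norm{(f^*-g)\chi_{\Omega_n}}_X + \norm{g\chi_{\Omega_n}}_X \leq \norm{f^*-g}_X + \norm{g\chi_{\Omega_n}}_X$. Since $\abs{g}\chi_{\Omega_n}\downarrow 0$ and $g$ is order continuous, the last term tends to $0$; passing to the limit yields $\lim_n \norm{f^*\chi_{\Omega_n}}_X \leq \norm{f^*-g}_X$, and taking the infimum over $g \in X_a$ gives $\lim_n \norm{f^*\chi_{\Omega_n}}_X \leq \distE{f^*, X_a}$.

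The main obstacle is the upper bound, and more precisely the reduction behind it: the only place where rearrangement invariance and the hypothesis on $X_a$ are genuinely used is in guaranteeing that the bounded ``middle part'' $f^*\chi_{\Omega\setminus\Omega_n}$ is order continuous. The lower bound, by contrast, is soft and requires only order continuity of the competitor $g$ together with $\Omega_n \downarrow \emptyset$.
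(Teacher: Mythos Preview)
Your proof is correct and follows essentially the same two-inequality strategy as the paper: for the upper bound you use that $f^*\chi_{\Omega\setminus\Omega_n}\in X_a$, and for the lower bound you use the triangle inequality together with $\norm{g\chi_{\Omega_n}}_X\to 0$ for $g\in X_a$. The only difference is cosmetic: where the paper cites \cite[Lemma~7.3(ii)]{deJ77} or \cite[Theorem~B]{KT17} for the fact that bounded functions supported on sets of finite measure lie in $X_a$ (equivalently, that $X_a\neq\{0\}$ forces $\varphi_X(0^+)=0$), you supply a short self-contained argument for it.
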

	\begin{proof}
		We will only give a proof when $\Omega = (0, \infty)$, because the other cases are completely analogous.
		
		To begin with, due to the fact that the sequence $\{ \norm{f^* \chi_{(0,1/n) \cup (n,\infty)}}_X \}_{n=1}^{\infty}$ is non-increasing
		and bounded from below by 0, the limit of the above sequence must exists. Moreover, it is clear that we can assume that $X \neq X_a$,
		because otherwise there is nothing to prove.
		
		Denote
		\begin{equation*}
		L = \lim\limits_{n \rightarrow \infty} \norm{f^* \chi_{(0,1/n) \cup (n,\infty)}}_X.
		\end{equation*}
		
		First of all, it is not hard to see that
		\begin{equation*}
		\distE{f^*,X_a} \geqslant L,
		\end{equation*}
		or, equivalently, $\norm{f^* - g}_X \geqslant L$ for each $g \in X_a$. Indeed,
		\begin{align*}
		\norm{f^* - g}_X & \geqslant \lim\limits_{n \rightarrow \infty} \norm{(f^* - g) \chi_{(0,1/n) \cup (n,\infty)}}_X \\
		& \geqslant \lim\limits_{n \rightarrow \infty} \abs{\norm{f^* \chi_{(0,1/n) \cup (n,\infty)}}_X - \norm{g \chi_{(0,1/n) \cup (n,\infty)}}_X} = L,
		\end{align*}
		where the equality follows from the fact that $g \in X_a$.
		
		On the other hand, our assumption that $X_a \neq \{ 0 \}$ implies that $\{ f^* \chi_{(1/n,n)} \}_{n=1}^{\infty} \subset X_a$
		(cf. \cite[Lemma~7.3~(ii)]{deJ77} or \cite[Theorem~B]{KT17}), so
		\begin{equation*}
		\distE{f^*,X_a} = \inf\{\norm{f^* - g} \colon g \in X_a \} \leqslant \lim\limits_{n \rightarrow \infty} \norm{f^* - f^* \chi_{(1/n,n)}}_X = L,
		\end{equation*}
		and the proof is done.
	\end{proof}

	Now we prove the main result of this section.
	
	\begin{theorem}\label{fgwiazdka}
		{\it Let $X$ be a rearrangement invariant space on $\Omega$, where $\Omega = (0,1)$ or $\Omega = (0,\infty)$, with the Fatou property.
			Then for $f \in X$ we have the following equality}
		\begin{equation*}
			\distE{f^*, X_a} = \distE{f, X_a}.
		\end{equation*}
	\end{theorem}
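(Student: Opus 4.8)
The plan is to establish the two inequalities $\distE{f^*, X_a} \leqslant \distE{f, X_a}$ and $\distE{f, X_a} \leqslant \distE{f^*, X_a}$ separately, after a preliminary reduction to the case $f \geqslant 0$. This reduction is immediate: by Lemma~\ref{modul} we have $\distE{f, X_a} = \distE{\abs{f}, X_a}$, and since $\abs{f}^* = f^*$, we may replace $f$ by $\abs{f}$ throughout and assume $f \geqslant 0$. Two standard facts about rearrangement invariant spaces underpin everything. First, $X_a$ is itself rearrangement invariant (it is a closed rearrangement invariant ideal of $X$; this is also transparent from the rearrangement description of $X_a$ in \cite[Lemma~7.3]{deJ77} or \cite[Theorem~B]{KT17}), so that $g \in X_a$ together with $h$ equimeasurable with $g$ forces $h \in X_a$. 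Second, Ryff's measure preserving representation: since $\Omega$ is non-atomic, there is a measure preserving transformation $\sigma \colon \Omega \to \Omega$ with $f = f^* \circ \sigma$ almost everywhere.

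For the inequality $\distE{f, X_a} \leqslant \distE{f^*, X_a}$ I would transport approximants of $f^*$ back to $f$ using $\sigma$. Given any $h \in X_a$, the composition $h \circ \sigma$ is equimeasurable with $h$ and hence lies in $X_a$ by the first fact above; moreover $f - h\circ\sigma = (f^* - h)\circ\sigma$ is equimeasurable with $f^* - h$, so $\norm{f - h\circ\sigma}_X = \norm{f^* - h}_X$ by rearrangement invariance of the norm. Therefore $\distE{f, X_a} \leqslant \norm{f - h\circ\sigma}_X = \norm{f^* - h}_X$, and taking the infimum over $h \in X_a$ gives $\distE{f, X_a} \leqslant \distE{f^*, X_a}$. (Concretely, one may take $h = f^*\chi_{(1/n,n)}$: then $f - h\circ\sigma = (f^*\chi_{\Omega_n})\circ\sigma$ realises the limit $L$ of Theorem~\ref{thm: bardziej konstruktywna odleglosc}, so this direction can equally be phrased through that formula.)

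For the reverse inequality $\distE{f^*, X_a} \leqslant \distE{f, X_a}$ I would invoke the nonexpansiveness of the decreasing rearrangement on rearrangement invariant spaces with the Fatou property, namely $\norm{f^* - g^*}_X \leqslant \norm{f - g}_X$ for all $f, g \in X$; this follows from the Calder\'on--Hardy--Littlewood--P\'olya submajorization $f^* - g^* \prec\prec f - g$ combined with the Fatou property of $X$. Since $g^* \in X_a$ whenever $g \in X_a$, for every such $g$ we get $\distE{f^*, X_a} \leqslant \norm{f^* - g^*}_X \leqslant \norm{f - g}_X$, and passing to the infimum over $g \in X_a$ yields $\distE{f^*, X_a} \leqslant \distE{f, X_a}$. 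Combining the two inequalities finishes the proof.

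The two transport principles are the crux, and I expect the nonexpansiveness step to be the main obstacle. The first inequality is comparatively soft, needing only that $\sigma$ carries $X_a$ into itself and preserves the norm of differences; the second inequality is where the structure of the norm genuinely enters, and it is precisely here that the Fatou hypothesis is used, since it is what licenses the passage from the submajorization $f^* - g^* \prec\prec f - g$ to the norm estimate $\norm{f^* - g^*}_X \leqslant \norm{f - g}_X$. A cleaner but more delicate alternative would be to arrange $\sigma$ to be a measure preserving bijection and obtain both inequalities from Ryff's map alone, but this requires controlling the flat parts of $f^*$ and I would avoid it in favour of the submajorization argument.
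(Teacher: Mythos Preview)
Your inequality $\distE{f^*, X_a} \leqslant \distE{f, X_a}$ is correct and coincides with the paper's argument (the paper calls this the Calder\'on--Ryff step, citing \cite[Theorem~2.10]{BS88} together with \cite[Lemma~2.6]{CKP14} for $g\in X_a \Rightarrow g^*\in X_a$). The gap is in the other direction. You invoke a measure preserving $\sigma \colon \Omega \to \Omega$ with $f = f^* \circ \sigma$ a.e., but Ryff's theorem (\cite[Corollary~7.6]{BS88}) only supplies $\sigma$ from $\supp(f)$ onto $\supp(f^*)$, and crucially only under the hypothesis $f^*(\infty) = 0$. When $\Omega = (0,\infty)$ and $f^*(\infty) > 0$ your global map need not exist at all: for $f = \chi_{(1,\infty)}$ one has $f^* \equiv 1$, and no $\sigma$ defined on all of $(0,\infty)$ can satisfy $f^* \circ \sigma = f$, since the left side is identically $1$ while $f$ vanishes on $(0,1)$. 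So the transport argument breaks down precisely in this regime, and your closing remark about ``controlling the flat parts of $f^*$'' seems to locate the difficulty only in inverting $\sigma$, not in its very existence.

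The paper's proof agrees with yours when $f^*(\infty) = 0$ (its part $1^\circ$), but then devotes the bulk of the work to the case $f^*(\infty) > 0$, splitting further according to whether $m(\{x:f(x) > f^*(\infty)\})$ is infinite or finite, and in the finite case according to whether $f\chi_{\{f > f^*(\infty)\}}$ lies in $X_a$. In each subcase either a Ryff-type map is built only on the set $\{f>f^*(\infty)\}$ and the distribution of $f-h_g$ is analysed by hand, or both distances are computed explicitly via the formula of Theorem~\ref{thm: bardziej konstruktywna odleglosc}. None of this is covered by your sketch. A smaller related imprecision: even when $f^*(\infty) = 0$, the Ryff map lives on $\supp(f)$ and must be extended measure-preservingly to all of $\Omega$ before your equimeasurability claim for $h \circ \sigma$ is literally true; this is routine (the complements of $\supp(f)$ and $\supp(f^*)$ are nonatomic of equal measure) but should be said.
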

	\begin{proof}
		We can assume that $X_{a} \neq \left\{ 0 \right\}$, $X_{a} \neq X$ and $f \notin X_{a}$, because otherwise there is nothing to prove.
		Moreover, in view of Lemma \ref{modul}, we can also assume that $f > 0$.
		
		Let's start by noticing that the inequality
		\begin{equation}\tag{$\heartsuit$}
			\distE{f, X_a} \geqslant \distE{f^*, X_a}
		\end{equation}
		is true even without additional assumptions
		on the function $f \in X$. In fact, we have
		\begin{align*}
			\distE{f, X_a} & = \inf\{\norm{f - g}_X \colon g \in X_a \}
					   		\geqslant \inf\{\norm{f^* - g^*}_X \colon g \in X_a \} \\
					   	   & = \inf\{\norm{f^* - g}_X \colon g \in X_a \text{ and } g = g^* \}
					   		\geqslant \distE{f^*, X_a},
		\end{align*}
		where the first inequality follows essentially from the Calderón--Ryff theorem (see \cite[Theorem~2.10, p.~114]{BS88}; cf. also Lemma 4.6 in \cite[p.~95]{KPS82})
		and the second equality follows from Lemma 2.6 in \cite{CKP14}.
		Consequently, it remains to prove the reverse inequality. To do this, we divide the proof into several parts.
		
		$1^{\circ}$. Suppose that $f^{\ast}\left( \infty \right) = 0$. Then, as a consequence of Ryff's theorem (see \cite[Corollary 7.6, p. 83]{BS88}),
		there is a measure preserving transformation $\sigma$ from the support of $f$ onto the support of $f^*$ such that
		$f^{\ast}\circ \sigma = f$ $m$-almost everywhere on the support of $f$. If we were able to show that for each $g \in X_{a}$
		there exists $h_g \in X_{a}$ such that $\norm{f - h_g}_X \leqslant \norm{f^* - g}_X$, the proof of this part would be complete.
		For this, let's take $g \in X_{a}$ and put $h_g = g \circ \sigma$. Due to \cite[Lemma 2.6]{CKP14} and the fact that
		$h_g^* = (g \circ \sigma)^*$, we conclude that $h_g \in X_a$. Now, since we have the equalities
		\begin{equation*}
			\norm{f^* - g}_X = \norm{(f^* - g) \circ \sigma}_X = \norm{f - h_g}_X,
		\end{equation*}
		it follows that $\distE{f^*,X_a} \geqslant \distE{f,X_a}$. Combining this inequality with the inequality ($\heartsuit$), we see that we have just
		finished the proof in the first case.
		
		$2^{\circ}$. Moving on to the second case, let's assume that $f^*(\infty) > 0$. Denote
		\begin{equation*}
			\Omega_{\Box} = \{x \in \Omega \colon f(x) \Box f^*(\infty) \},
		\end{equation*}
		where $\Box$ is one of the following relations: $\geqslant$, $>$, $=$, $<$ or $\leqslant$ (for example, $\Omega_> = \{x \in \Omega \colon f(x) > f^*(\infty)\}$).
		Let us consider three subcases.
		
		Case 1: Assume that $m(\Omega_>) = \infty$. Then there exists a measure preserving transformation $\omega \colon \Omega_> \rightarrow [0,\infty)$
		such that $f^{\ast} \circ \omega = f$ $m$-almost everywhere on $\Omega_>$. Take $g \in X_a$ and put
		\begin{equation*}
			h_g = (g \circ \omega)\chi_{\Omega_>}.
		\end{equation*}
		We claim that $d_{f\chi_{\Omega_>} - h_g} = d_{f - h_g}$. Indeed, for all $0 \leqslant x < f^*(\infty)$ we have
		\begin{equation*}
			d_{f\chi_{\Omega_>} - h_g}(x) = \infty = d_{f - h_g}(x),
		\end{equation*}
		because $h_g \in X_a$, whence $h^*(\infty) = 0$. Furthermore, for $x \in \Omega_{\leqslant}$
		\begin{equation*}
			h_g(x) = 0 \text{ and } f(x) \leqslant f^*(\infty),
		\end{equation*}
		so for each $x \geqslant f^*(\infty)$ we obtain that
		\begin{align*}
			d_{f - h_g}(x) & = m\{t \in \Omega \colon \abs{f(t) - h_g(t)} > x \} = m\{t \in \Omega_> \colon \abs{f(t) - h_g(t)} > x \} \\
						   & = m\{t \in \Omega_> \colon \abs{f(t)\chi_{\Omega_>} - h_g(t)} > x \} = d_{f\chi_{\Omega_>} - h_g},
		\end{align*}
		and the claim follows. In consequence, we have
		\begin{align*}
			\norm{f^* - g}_X & = \norm{(f^* - g) \circ \omega}_X = \norm{f\chi_{\Omega_>} - h_g}_X \\
							 & = \norm{(f\chi_{\Omega_>} - h_g)^*}_X = \norm{(f - h_g)^*}_X = \norm{f - h_g}_X.
		\end{align*}
		
		Case 2: Suppose now that $m(\Omega_>) < \infty$ and $f\chi_{\Omega_{>}} \in X_a$. Put
		\begin{equation*}
		\Gamma_f = \{ g \in X_a \colon g(x) = f(x) \text{ for } x \in \Omega_{>} \text{ and } 0 \leqslant g(x) \leqslant f^*(\infty) \text{ otherwise } \}.
		\end{equation*}
		It follows directly from the above definition that $f\chi_{\Omega_{>}} \in \Gamma_f$ and $\Gamma_f \subset X_a$. We claim that
		\begin{equation} \tag{$\spadesuit$}
		\distE{f,X_a} = \distE{f,\Gamma_f}.
		\end{equation}
		To see this, for each $g \in X_a$ let us define a function $g_f$ in the following way
			\[
			g_f(x) =\left\{ 
			\begin{array}{ccc}
			f\left( x\right)  & \text{if} & x\in \Omega_{>} \\ 
			f\left( x\right)  & \text{if} & x\in \Omega \setminus \Omega_{>} \text{ and } g(x) > f^*(\infty) \\ 
			g\left( x\right)  &  & \text{otherwise}
			\end{array}
			\right. .
			\]
		Now, it is clear that $\norm{f - g_f}_X \leqslant \norm{f - g}_X$ and, due to the fact that
		$m\{x \in \Omega \setminus \Omega_{>} \colon g(x) > f^*(\infty) \} < \infty$, also $g_f \in \Gamma_f$. But this means that $\distE{f,X_a} \geqslant \distE{f,\Gamma_f}$.
		Since $\Gamma_f \subset X_a$, the opposite inequality is evident and the equality $(\spadesuit)$ is proved. Consequently, since $g \in X_a$, so
		\begin{equation*}
			\distE{f,X_a} = \inf\{\norm{f - g} \colon g \in \Gamma_f \} = \norm{f^*\chi_{(m(\Omega_{>}), \infty)}}_X = f^*(\infty)\norm{\chi_{(0,\infty)}}_X.
		\end{equation*}
		On the other hand, since $m(\Omega_>) < \infty$ and $f\chi_{\Omega_{>}} \in X_a$, applying Theorem \ref{thm: bardziej konstruktywna odleglosc},
		we conclude that
		\begin{equation*}
			\distE{f^*, X_a} = \lim\limits_{n \rightarrow \infty}\norm{f^*\chi_{(0,1/n) \cup (n,\infty)}} = f^*(\infty)\norm{\chi_{(0,\infty)}}_X,
		\end{equation*}
		that is to say, $\distE{f,X_a} = \distE{f^*,X_a}$.
		
		Case 3: Let $m(\Omega_>) < \infty$ and $f\chi _{\Omega_{>}} \in X\setminus X_{a}$.
		Observe that $f^*(0^+) = \infty$, because otherwise, keeping in mind that $m(\Omega_{>}) < \infty$,
		we would get $f\chi _{\Omega_{>}} \in X_a$ which is undoubtedly a contradiction. We will show that
		\begin{equation}\tag{$\diamondsuit$}
		\distE{f, X_a} = L \coloneqq \lim\limits_{n \rightarrow \infty} \norm{f^*\chi_{(0,\frac{1}{n})} + f^*(\infty)\chi_{(\frac{1}{n}, \infty)}}_X.
		\end{equation}
		Take $n_0 \in \mathbb{N}$ satisfying $f^*(1/n_0) > f^*(\infty)$ and note that this limit exists because the sequence
		$\left\{ \norm{f^*\chi_{(0,\frac{1}{n})} + f^*(\infty)\chi_{(\frac{1}{n}, \infty)}}_X \right\}_{n=1}^\infty$ is non-increasing for $n \geqslant n_0$
		and bounded from below. Now, there is a measure preserving transformation $\tau \colon \Omega_{>} \rightarrow (0,m(\Omega_{>}))$ such that
		$f^* \circ \tau = f$.
		Put $T_n = \tau^{-1}((0,1/n))$ for $n = n_0, n_0+1, ...$ and denote
		\begin{equation*}
		f_n = f^*\left( \frac{1}{n_0} \right)\chi_{T_n}
		\end{equation*}
		It is clear that $f_n \in X_a$ for $n \geqslant n_0$. Moreover, since $f^*(0^+) = \infty$, so
		\begin{equation*}
		\left( f^* - f^*\left( \frac{1}{n_0} \right) \right)\chi_{(0,\frac{1}{n})} \geqslant f^*(\infty)\chi_{(0,\frac{1}{n})} \text{ for } n \in \mathbb{N} \text{ big enough},
		\end{equation*}
		and, consequently, we have
		\begin{equation*}
		\distE{f,X_a} \leqslant \lim\limits_{n \rightarrow \infty} \norm{f - f_n}_X
			= \lim\limits_{n \rightarrow \infty} \norm{\left( f^* - f^*\left( \frac{1}{n_0} \right) \right)\chi_{(0,\frac{1}{n})} + f^*(\infty)\chi_{(\frac{1}{n}, \infty)}}_X
			\leqslant L.
		\end{equation*}
		On the other hand, we get
		\begin{align*}
		L \geqslant \distE{f,X_a} & = \inf\{\norm{f - g}_X \colon g \in X_a \text{ and } \abs{g} \leqslant \abs{f} \}\\
		  						  &\geqslant \inf\{\norm{f^* - g^*}_X \colon g \in X_a \text{ and } \abs{g} \leqslant f \} \\
		  						  & \geqslant \inf\{\norm{f^* - g^*}_X \colon g \in X_a \text{ and } 0 \leqslant g^* \leqslant f^* \},
		\end{align*}
		where the first equality follows from Lemma~\ref{lemma 1 distance} and second inequality follows from the Calder\'{o}n--Ryff theorem.
		Thus, to finish the proof of $(\diamondsuit)$, it is enough to show that
		\begin{equation} \tag{$\clubsuit$}
		\text{for every } \varepsilon > 0 \text{ and } h \in X_a \text{ with } 0 \leqslant h^* \leqslant f^* \text{ we have } \norm{f^* - h^*}_X \geqslant L - \varepsilon.
		\end{equation}
		To see this, take $\varepsilon > 0$ and $h \in X_a$ as above. We can find a number, say $m \in \mathbb{N}$, such that
		\begin{equation*}
		\norm{h^*\chi_{(0,\frac{1}{m}) \cup (m,\infty)}} < \epsilon,
		\end{equation*}
		because $h^* \in X_a$ (see \cite[Lemma~2.6]{CKP14}). Hence,
		\begin{align*}
		\norm{f^* - h^*}_X & = \norm{f^* - h^*\chi_{(\frac{1}{m}, m)} - h^*\chi_{(0,\frac{1}{m}) \cup (m,\infty)}}_X \\
						   & \geqslant \norm{f^* - h^*\chi_{(\frac{1}{m}, m)}}_X - \norm{h^*\chi_{(0,\frac{1}{m}) \cup (m,\infty)}}_X \\
						   & > \norm{f^* - h^*\chi_{(\frac{1}{m}, m)}}_X - \varepsilon \\
						   & \geqslant \norm{f^*\chi_{(0,\frac{1}{m}) \cup (m,\infty)} + (f^* - h^*)\chi_{(\frac{1}{m}, m)}}_X - \varepsilon
						   \geqslant \norm{f^*\chi_{(0,\frac{1}{m}) \cup (m,\infty)}}_X - \varepsilon,
		\end{align*}
		and $(\clubsuit)$ follows. But this means that the proof of $(\diamondsuit)$ is finished as well. However, using Theorem
		\ref{thm: bardziej konstruktywna odleglosc} again, we see that
		\begin{equation*}
			\distE{f^*,X_a} = \lim\limits_{n \rightarrow \infty} \norm{f^*\chi_{(0,1/n) \cup (n,\infty)}}_X,
		\end{equation*}
		that is, $\distE{f,X_a} = \distE{f^*,X_a}$. But this, finally, completes the proof of the last case and, at the same time, also of the whole theorem.
	\end{proof}

	\begin{theorem}
		{\it Let $X$ be a rearrangement invariant space on $\mathbb{N}$ with the Fatou property. Then for $f \in X$ we have the following equality}
		\begin{equation*}
			\distE{f^*, X_a} = \distE{f, X_a}.
		\end{equation*}
	\end{theorem}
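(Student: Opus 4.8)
The plan is to mirror the proof of Theorem~\ref{fgwiazdka} as closely as possible, while exploiting the fact that the counting measure forces two genuine simplifications. First I would make the same harmless reductions: assume $X_a \neq \{0\}$, $X_a \neq X$ and $f \notin X_a$ (otherwise the claim is trivial), and, by Lemma~\ref{modul}, assume $f = \abs{f} \geqslant 0$. The inequality $\distE{f, X_a} \geqslant \distE{f^*, X_a}$ carries over \emph{verbatim} from the derivation of $(\heartsuit)$: the Calder\'on--Ryff theorem (\cite[Theorem~2.10]{BS88}) and \cite[Lemma~2.6]{CKP14} are valid over any $\sigma$-finite measure space, hence also for the counting measure. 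So, as before, the whole issue is the reverse inequality, and I would split the argument according to the value $c \coloneqq f^*(\infty) = \lim_{n} f_n^*$ (this limit exists since $\{f_n^*\}$ is non-increasing and every rearrangement invariant sequence space embeds into $\ell_\infty$).

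Next I would treat the two \enquote{generic} regimes, both modelled on case $1^{\circ}$ and Case~1 of Theorem~\ref{fgwiazdka}. If $c = 0$ then $f \in c_0$, so $f^*$ is a genuine \emph{permutation} of $f$: there is a bijection $\sigma \colon \mathbb{N} \rightarrow \mathbb{N}$ with $f^* \circ \sigma = f$. Given $g \in X_a$, I put $h_g = g \circ \sigma$; since $h_g^* = g^*$, \cite[Lemma~2.6]{CKP14} gives $h_g \in X_a$, and the identity $\norm{f^* - g}_X = \norm{(f^* - g)\circ \sigma}_X = \norm{f - h_g}_X$ yields $\distE{f, X_a} \leqslant \distE{f^*, X_a}$. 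If $c > 0$, set $\Omega_{>} = \{ n \colon f_n > c \}$ and suppose first that $\#\Omega_{>} = \infty$. Because each value exceeding $c$ has finite multiplicity and these values accumulate only at $c$, the decreasing rearrangement of $\{ f_n \colon n \in \Omega_{>} \}$ fills every coordinate of $f^*$; hence there is a measure-preserving bijection $\omega \colon \Omega_{>} \rightarrow \mathbb{N}$ with $f^* \circ \omega = f$ on $\Omega_{>}$. With $h_g = (g \circ \omega)\chi_{\Omega_{>}}$ I would reproduce the distributional identity $d_{f\chi_{\Omega_{>}} - h_g} = d_{f - h_g}$ exactly as in Case~1 (using that $h_g \in X_a$ forces $h_g^*(\infty) = 0$), obtaining $\norm{f^* - g}_X = \norm{f - h_g}_X$ and therefore the desired inequality.

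Finally, the remaining regime $c > 0$ with $\#\Omega_{>} < \infty$ is where the sequence setting pays off: here $f\chi_{\Omega_{>}}$ is \emph{finitely supported}, hence automatically lies in $X_a$ (as $X_a \neq \{0\}$ makes every $e_n$, and so every finitely supported sequence, order continuous). Consequently the analogue of the troublesome Case~3 of Theorem~\ref{fgwiazdka} simply cannot occur, and only the counterpart of Case~2 survives. In this situation $f^* \geqslant c\,\chi_{\mathbb{N}}$ forces $\chi_{\mathbb{N}} \in X$, so $\varphi_X(\infty) < \infty$. Applying Theorem~\ref{thm: bardziej konstruktywna odleglosc} with $\Omega_n = \{ k \in \mathbb{N} \colon k > n \}$, and using that $(f^* - c\,\chi_{\mathbb{N}}) \in X_a$ (its rearrangement tends to $0$, cf.~\cite[Theorem~B]{KT17} or \cite[Lemma~7.3]{deJ77}), I would compute $\distE{f^*, X_a} = \lim_{n} \norm{f^* \chi_{\Omega_n}}_X = c\,\varphi_X(\infty) = f^*(\infty)\varphi_X(\infty)$, and the $\Gamma_f$-argument of Case~2 gives the very same value for $\distE{f, X_a}$. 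Collecting the three regimes closes the reverse inequality and, together with $(\heartsuit)$, proves the equality. I expect the only non-routine points to be the verification that the rearrangement of $\{ f_n \colon n \in \Omega_{>}\}$ exhausts all of $f^*$ in the infinite subcase (securing the bijection $\omega$) and the tail-norm computation $\lim_{n}\norm{f^*\chi_{\Omega_n}}_X = c\,\varphi_X(\infty)$; everything else transfers mechanically from Theorem~\ref{fgwiazdka}.
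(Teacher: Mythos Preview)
Your proposal is correct and follows exactly the route the paper indicates: its own proof is a four-line sketch telling the reader to rerun Theorem~\ref{fgwiazdka} using that $X_a$ is automatically nontrivial, that $X_a\hookrightarrow c_0$, that finitely supported sequences lie in $X_a$ (so Case~3 disappears), and that the Calder\'on--Ryff inequality holds over $\mathbb{N}$---precisely the simplifications you isolate. One small repair is needed in your $c=0$ case: the assertion that $f^*$ is a permutation of $f$ via a bijection $\sigma\colon\mathbb{N}\to\mathbb{N}$ fails whenever $f$ has zero coordinates (then $f^*$ is strictly positive on all of $\mathbb{N}$ while $f$ is not); the cure is the one already used in $1^{\circ}$ of Theorem~\ref{fgwiazdka}, namely take $\sigma$ as a bijection from $\supp(f)$ (infinite since $f\notin X_a$) onto $\mathbb{N}$ and put $h_g=(g\circ\sigma)\chi_{\supp(f)}$.
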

	\begin{proof}
		The argument in the sequence case (which is probably not a suprise) is analogous to the proof of Theorem \ref{fgwiazdka} and,
		in fact, it is even easier. But we want to give a few clues that (we hope) will allow to easily modify the mentioned proof.
		
		$1^{\circ}$. For a rearrangement invariant sequence space $X$ the ideal $X_a$ is always non-trivial.
		
		$2^{\circ}$. Since $X \hookrightarrow \ell_\infty$ (cf. \cite[Corollary~6.8, p.~78]{BS88}), so $X_a \hookrightarrow c_0$
		and this means that $f^*(\infty) = 0$, whenever $f \in X_a$.
		
		$3^{\circ}$. Every function supported on the set of finite measure belongs to $X_a$.
		
		$4^{\circ}$. The inequality (coming from the Calder\'{o}n--Ryff theorem):
		\begin{equation*}
		 \norm{f^* - g^*}_X \leqslant \norm{f - g}_X,
		\end{equation*}
		works also for every rearrangement invariant sequence space $X$ (cf.~\cite[Corollary~7.6, p.~83]{BS88}).
	\end{proof}

	\begin{corollary} \label{corollary: formula na odleglosc}
		{\it Let $X$ be a rearrangement invariant space on $\Omega$, where $\Omega = (0,1)$, $\Omega = (0,\infty)$ or $\Omega = \mathbb{N}$,
			such that the ideal $X_a$ is non-trivial. Then for $f \in X$ we have the following equality}
		\begin{equation*}
			\distE{f,X_a} = \lim\limits_{n \rightarrow \infty} \norm{f^* \chi_{\Omega_n}}_X,
		\end{equation*}
		{\it where $\Omega_n = \Omega \cap \left((0, \frac{1}{n}) \cup (n,\infty)\right)$.}
	\end{corollary}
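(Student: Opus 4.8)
The plan is to read off this formula by concatenating the two principal results of the present section, so that essentially no new argument is needed. First I would invoke the identity $\distE{f, X_a} = \distE{f^*, X_a}$: in the function cases $\Omega = (0,1)$ and $\Omega = (0,\infty)$ this is exactly Theorem~\ref{fgwiazdka}, while in the sequence case $\Omega = \mathbb{N}$ it is the theorem stated immediately afterwards. Then I would feed the right-hand side into Theorem~\ref{thm: bardziej konstruktywna odleglosc}, which rewrites it in the desired computable shape, $\distE{f^*, X_a} = \lim_{n \to \infty} \norm{f^* \chi_{\Omega_n}}_X$. Chaining the two equalities gives $\distE{f, X_a} = \lim_{n \to \infty} \norm{f^* \chi_{\Omega_n}}_X$, which is precisely the assertion.

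It is instructive to separate the cheap half of the statement from the expensive one. The inequality $\distE{f, X_a} \geqslant \lim_{n \to \infty}\norm{f^* \chi_{\Omega_n}}_X$ costs nothing beyond what is already available: the inequality $(\heartsuit)$ from the proof of Theorem~\ref{fgwiazdka}, a direct consequence of the Calder\'on--Ryff theorem and valid for every $f \in X$, gives $\distE{f, X_a} \geqslant \distE{f^*, X_a}$, and Theorem~\ref{thm: bardziej konstruktywna odleglosc} identifies the latter quantity with the limit $L := \lim_{n \to \infty}\norm{f^* \chi_{\Omega_n}}_X$. The reverse estimate $\distE{f, X_a} \leqslant L$ is where all the genuine difficulty lies, and it is nothing other than the converse of $(\heartsuit)$ carried out in Theorem~\ref{fgwiazdka} through the measure-preserving transformations supplied by Ryff's theorem and the case distinction according to the value of $f^*(\infty)$ and the measure of the level set $\Omega_>$.

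Accordingly, the only point that requires care is the bookkeeping of hypotheses rather than a fresh obstacle. Theorem~\ref{thm: bardziej konstruktywna odleglosc} asks merely that $X_a$ be non-trivial, whereas the substantive input $\distE{f, X_a} = \distE{f^*, X_a}$ is delivered by Theorem~\ref{fgwiazdka} and its sequence analogue; once that equality is in force, the corollary is immediate. I would therefore present the proof as a two-line deduction, devoting one sentence to each of the two cited results and flagging that the equality of the two distances, already established above, is the real workhorse.
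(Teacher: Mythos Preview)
Your proposal is correct and matches the paper's intended argument exactly: the corollary is stated without proof immediately after Theorem~\ref{fgwiazdka} and its sequence analogue, precisely because it is the concatenation of those results with Theorem~\ref{thm: bardziej konstruktywna odleglosc} that you describe. The only quibble is the one you yourself raise about bookkeeping---Theorem~\ref{fgwiazdka} carries a Fatou hypothesis that the corollary does not repeat---but this discrepancy is already present in the paper and is not a defect of your derivation.
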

	
	The above corollary can be interpreted in the following way: the function $f \in X$ can be approximated by a bounded function supported
	in set of finite measure to within $\distE{f,X_a} + \varepsilon$, where $\varepsilon > 0$ can be arbitrarily small, and this function
	can be selected in an explicit way. A similar result, but for Orlicz spaces equipped with an Orlicz norm, can be found in \cite[Lemma~10.1, p.~84]{KR61}.

	\begin{corollary}[E. de Jonge, 1977 \cite{deJ77}] \label{corollary: de Jonge}
		{\it Let $X$ be a rearrangement invariant space on $(0,\infty)$ with the Fatou property such that $L_\infty \hookrightarrow X$.
			Suppose that}
		\begin{equation}\tag{${\mathscr D}_\infty$}
			\forall f \in X \text{ } \forall A \subset (0,\infty) \textit{ with } m(A) < \infty \colon f\chi_A \in X_a.
		\end{equation}	
		{\it Then for all $f \in X$ we have the following formula}
		\begin{equation*}
			\distE{f,X_a} = f^*(\infty)\varphi_X(\infty).
		\end{equation*}
	\end{corollary}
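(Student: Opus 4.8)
The plan is to read the formula off directly from Corollary~\ref{corollary: formula na odleglosc} and then to evaluate the resulting limit by exploiting the two extra hypotheses. First I would record that $X_a$ is non-trivial, so that the corollary actually applies: since $L_\infty \hookrightarrow X$ we have $\chi_{(0,1)} \in X$, and then the condition $(\mathscr{D}_\infty)$ applied with $A = (0,1)$ gives $\chi_{(0,1)} = \chi_{(0,1)}\chi_{(0,1)} \in X_a$. Hence Corollary~\ref{corollary: formula na odleglosc} yields
\begin{equation*}
	\distE{f, X_a} = \lim_{n\to\infty}\norm{f^*\chi_{(0,1/n)\cup(n,\infty)}}_X,
\end{equation*}
and it remains only to show that this limit equals $f^*(\infty)\varphi_X(\infty)$. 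Since the two pieces $f^*\chi_{(0,1/n)}$ and $f^*\chi_{(n,\infty)}$ have disjoint supports, by the triangle inequality it suffices to prove separately that $\norm{f^*\chi_{(0,1/n)}}_X \to 0$ and $\norm{f^*\chi_{(n,\infty)}}_X \to f^*(\infty)\varphi_X(\infty)$.

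For the behaviour near the origin I would invoke $(\mathscr{D}_\infty)$ once more: since $f^*\chi_{(0,1)} \in X_a$ and $f^*\chi_{(0,1/n)} \downarrow 0$ with $0 \leqslant f^*\chi_{(0,1/n)} \leqslant f^*\chi_{(0,1)}$, the very definition of order continuity forces $\norm{f^*\chi_{(0,1/n)}}_X \to 0$. For the behaviour near infinity I would split $f^* = (f^* - f^*(\infty)) + f^*(\infty)$ on the set $(n,\infty)$. On the one hand $\chi_{(n,\infty)}$ and $\chi_{(0,\infty)}$ are equimeasurable (both indicators of infinite-measure sets), so rearrangement invariance together with the Fatou property gives $\norm{f^*(\infty)\chi_{(n,\infty)}}_X = f^*(\infty)\norm{\chi_{(0,\infty)}}_X = f^*(\infty)\varphi_X(\infty)$, a quantity that is finite precisely because $L_\infty \hookrightarrow X$. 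On the other hand, since $f^*$ is non-increasing with $f^*(n) \downarrow f^*(\infty)$, the ideal property yields
\begin{equation*}
	\norm{(f^* - f^*(\infty))\chi_{(n,\infty)}}_X \leqslant (f^*(n) - f^*(\infty))\norm{\chi_{(n,\infty)}}_X = (f^*(n) - f^*(\infty))\varphi_X(\infty) \to 0.
\end{equation*}
Combining these two facts through the triangle inequality gives $\norm{f^*\chi_{(n,\infty)}}_X \to f^*(\infty)\varphi_X(\infty)$, and assembling the two regimes completes the computation of the limit.

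I do not expect a serious obstacle here, since the statement is essentially a bookkeeping consequence of Corollary~\ref{corollary: formula na odleglosc}; the only genuinely substantive points are the two observations that (i) equimeasurability of $\chi_{(n,\infty)}$ with $\chi_{(0,\infty)}$ turns the tail at the constant level $f^*(\infty)$ into exactly $f^*(\infty)\varphi_X(\infty)$ for \emph{every} $n$, and (ii) the hypothesis $(\mathscr{D}_\infty)$ is precisely what is needed to discard the contribution of $f^*$ near the origin, where $f^*$ need not be bounded and so no crude majorant argument would succeed.
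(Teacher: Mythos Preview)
Your proof is correct and follows essentially the same approach as the paper: both apply Corollary~\ref{corollary: formula na odleglosc} and then evaluate the tail $\norm{f^*\chi_{(n,\infty)}}_X$ by comparing it to the constant function $f^*(\infty)\chi_{(0,\infty)}$ via the embedding $L_\infty \hookrightarrow X$. The paper phrases this last step as uniform convergence of the rearrangements $(f^*\chi_{(n,\infty)})^* \to f^*(\infty)\chi_{(0,\infty)}$, while you split $f^* = (f^* - f^*(\infty)) + f^*(\infty)$ directly on $(n,\infty)$; these are the same estimate in different clothing. Your treatment is in fact slightly more explicit, since the paper silently drops the contribution of $f^*\chi_{(0,1/n)}$ when citing Corollary~\ref{corollary: formula na odleglosc}, whereas you justify this via $(\mathscr{D}_\infty)$ and order continuity.
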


	Before we provide the proof, let us note that the above condition (${\mathscr D}_\infty$) itself plays in rearrangement invariant
	spaces an analogous role to the $\Delta_2(\infty)$-condition in the case of Orlicz spaces. In fact, if $X$ is an Orlicz space $L_F$
	on $(0,\infty)$ generated by the Orlicz function $F$ which vanishes outside zero and $F \in \Delta_2(\infty)$,
	then $L_\infty \hookrightarrow X$ and $X \in ({\mathscr D}_\infty)$. However, the assumptions of the above
	corollary are true also in the case of Lorentz spaces $\Lambda_\varphi$ provided $\varphi(0^+) = 0$ and $\varphi(\infty) < \infty$.

	\begin{proof}
		Suppose that $X$ is as above and observe that the sequence $\left\{(f^*\chi_{(n,\infty)})^*\right\}_{n=1}^\infty$
		is bounded and converges uniformly to a bounded function $f^*(\infty)\chi_{(0,\infty)}$, so in view of the embedding
		$L_\infty \hookrightarrow X$ we get
		\begin{equation*}
		\distE{f,X_a} = \lim\limits_{n\rightarrow\infty} \norm{f^*\chi_{(n,\infty)}}_X
		= \lim\limits_{n\rightarrow\infty} \norm{(f^*\chi_{(n,\infty)})^*}_X = \norm{f^*(\infty)\chi_{(0,\infty)}}_X
		= f^*(\infty)\varphi_X(\infty),
		\end{equation*}
		where the first equality follows from Corollary \ref{corollary: formula na odleglosc}.
	\end{proof}
	
	Although we believe that the below results (Corollaries \ref{corollary: 1} and \ref{corollary: 2}) are expected, we will present proofs
	of these facts as illustrations of the techniques we developed in this paragraph.

	\begin{corollary} \label{corollary: 1}
		{\it Let $\varphi$ be a quasi-concave function such that $\lim_{t \rightarrow 0^{+}} t/\varphi(t) = 0$. Then the Marcinkiewicz space $M_\varphi$
			contains a lattice isometric copy of $\ell_\infty$.}
	\end{corollary}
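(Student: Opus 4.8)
The plan is to apply the characterization from Theorem~\ref{Hudzik w druga strone}: it suffices to exhibit an element $f\in M_\varphi$ with $\norm{f}_{M_\varphi}=\distE{f,(M_\varphi)_a}=1$. First I would check the hypotheses of that theorem. The space $M_\varphi$ is rearrangement invariant with the Fatou property, hence it has the semi-Fatou property; and the assumption $\lim_{t\to 0^+}t/\varphi(t)=0$ (equivalently $\varphi(t)/t\to\infty$ as $t\to 0^+$) forces, in the principal case $\varphi(0^+)=0$, the ideal $(M_\varphi)_a$ to be non-trivial and of full support, so that $\supp((M_\varphi)_a)=\supp(M_\varphi)$ and the distance formula of Corollary~\ref{corollary: formula na odleglosc} becomes available.

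Next I would translate the two quantities into the Marcinkiewicz functional. For a decreasing $f=f^*$ one has $\norm{f}_{M_\varphi}=\sup_{t}\frac{\varphi(t)}{t}\int_0^t f^*(s)\,ds$, while Corollary~\ref{corollary: formula na odleglosc} gives $\distE{f,(M_\varphi)_a}=\lim_{n}\norm{f^*\chi_{\Omega_n}}_{M_\varphi}$; a direct computation (using that zeroing $f^*$ on the middle block leaves a decreasing function) shows this limit equals $\limsup_{t\to 0^+}\frac{\varphi(t)}{t}\int_0^t f^*(s)\,ds$, the contribution at $\infty$ being arranged to vanish, e.g. by taking $f^*(\infty)=0$. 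Thus the whole problem reduces to producing a decreasing $f^*$ whose Marcinkiewicz functional attains its global supremum in the limit $t\to 0^+$ and equals $1$.

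To build such an $f^*$, set $\psi(t)=t/\varphi(t)$; then $\psi$ is non-decreasing, $\psi(t)/t=1/\varphi(t)$ is non-increasing (so $\psi$ is star-shaped: $\psi(s)\geqslant\frac{\psi(t)}{t}s$ for $s\leqslant t$), and by hypothesis $\psi(0^+)=0$. Choosing a sufficiently sparse sequence $t_k\downarrow 0$, I would let $f^*$ be the decreasing piecewise-constant density whose primitive interpolates $\psi$ linearly at the nodes, so that $\int_0^{t_k}f^*=\psi(t_k)$ exactly and hence $\frac{\varphi(t_k)}{t_k}\int_0^{t_k}f^*=1$ for every $k$; star-shapedness then keeps $\frac{\varphi(t)}{t}\int_0^t f^*\leqslant 1$ for the remaining $t$ near $0$, giving $\norm{f}_{M_\varphi}=1$ with the supremum realized as $t\to 0^+$, whence $\distE{f,(M_\varphi)_a}=1$ and Theorem~\ref{Hudzik w druga strone} yields the lattice isometric copy of $\ell_\infty$.

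The main obstacle is exactly this construction: I must guarantee simultaneously that the interpolating density is non-increasing — which uses $\varphi(t)/t\to\infty$ to make the node-slopes grow toward $0$ — and that the functional stays $\leqslant 1$ on the whole intervals $(t_{k+1},t_k)$, not merely at the nodes, where the quasi-concavity of $\varphi$ (the star-shapedness of $\psi$) is essential and the partition must be chosen sparsely enough that $\frac{\int_0^t f^*}{\psi(t)}\to 1$ at the origin. A secondary point to dispatch is the degenerate case $\varphi(0^+)>0$, in which $(M_\varphi)_a$ may be trivial; there I would instead produce the copy of $\ell_\infty$ directly from the $L_\infty$-type behaviour of $M_\varphi$ near $0$.
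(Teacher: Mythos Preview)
Your overall plan coincides with the paper's: check the hypotheses of Theorem~\ref{Hudzik w druga strone}, invoke Corollary~\ref{corollary: formula na odleglosc} to reduce the distance to a limit at $0$, and build the witness out of $\psi(t)=t/\varphi(t)$. The paper does not discretise, however: it takes $g=\psi'\chi_{(0,a)}$ directly, so that $\int_0^t g=\psi(t)$ for \emph{every} $t\in(0,a)$ and the Marcinkiewicz functional $\frac{\varphi(t)}{t}\int_0^t g^*$ is identically $1$ on that whole interval; running the same computation with $\psi'\chi_{(0,\varepsilon)}$ as $\varepsilon\downarrow 0$ then gives $d_{M_\varphi}(g,(M_\varphi)_a)\geqslant 1$ with no node-by-node accounting. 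Your observation that the case $\varphi(0^+)>0$ needs a separate argument is a good one and is not addressed explicitly in the paper's proof.

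The obstacle you flag is real, but the resolution you propose is incorrect. The assertion ``star-shapedness then keeps $\frac{\varphi(t)}{t}\int_0^t f^{*}\leqslant 1$ for the remaining $t$'' amounts to saying that the chord of $\psi$ over $[t_{k+1},t_k]$ lies \emph{below} $\psi$; that is concavity of $\psi$, not star-shapedness. Star-shapedness ($\psi(s)\geqslant\frac{s}{t}\psi(t)$ for $s\leqslant t$) only says $\psi$ dominates rays through the origin, and this points the wrong way: to push the linear interpolant under the crude bound $t\mapsto (t/t_k)\psi(t_k)\leqslant\psi(t)$ you would need $\psi(t_{k+1})\leqslant (t_{k+1}/t_k)\psi(t_k)$, the \emph{reverse} of what star-shapedness provides. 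A quasi-concave $\psi$ can be genuinely non-concave arbitrarily close to $0$ --- take $\psi$ piecewise linear with $\psi(2^{-n})=1/n$: then $\psi(t)/t$ is non-increasing, yet the slopes $2^{n}/\bigl(n(n-1)\bigr)$ eventually increase --- and for such $\psi$ the chord overshoots, forcing $\norm{f}_{M_\varphi}>1$ while the nodal values (hence the distance) remain $1$; normalising then destroys the desired equality. No choice of sparseness repairs this in general. The paper's device of using $\psi'$ itself avoids the chord altogether, although it too tacitly relies on $\psi'$ being non-increasing, i.e.\ essentially on concavity of $\psi$.
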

	\begin{proof}
		Let us consider the function $\psi$ defined in the following way
		\begin{equation*}
			\psi(0) = 0 \text{ and } \psi(t) = \frac{t}{\varphi(t)} \text{ for } t > 0.
		\end{equation*}
		It follows from \cite[Theorem~5.2, p.~66]{BS88} that $\psi$ is also a quasi-concave function as a fundamental function of the space $X'$.
		In addition, due to the condition $\lim_{t \rightarrow 0^{+}} t/\varphi(t) = 0$, the function $\psi$ is continuous (cf. \cite[Corollary~5.3, p.~67]{BS88}).
		Therefore, we can find $0 < a < 1$ such that the function $\psi$ is strictly increasing on $(0,a)$. Consequently, the function $\psi'$
		is non-increasing and positive $m$-almost everywhere on $(0,a)$. Put $g = \psi'\chi_{(0,a)}$. Then
		\begin{equation*}
			\frac{\varphi(t)}{t}\int_0^t g^*(s)ds = \frac{\varphi(t)}{t}\int_0^t g(s)ds = \frac{\varphi(t)}{t}\psi(t) = 1 \text{ for } 0 < t < a,
		\end{equation*}
		and, because the function $t \mapsto \varphi(t)/t$ is non-increasing, we also have that
		\begin{equation*}
			\frac{\varphi(t)}{t}\int_0^t g^*(s)ds = \frac{\varphi(t)}{t} \frac{a}{\varphi(a)} \leqslant 1 \text{ for } t \geqslant a.
		\end{equation*}
		This means that $\norm{g}_{M_\varphi} = 1$. Moreover, we can repeat the above argument with the same result for the function
		$\psi_\varepsilon = \psi\chi_{(0,\varepsilon)}$, where $0 < \varepsilon < a$, in place of the function $\psi$. Therefore, using
		Corollary~\ref{corollary: formula na odleglosc}, we get
		\begin{equation*}
		d_{M_\varphi}(g, (M_\varphi)_a) \geqslant \lim_{n\rightarrow\infty}\norm{g^*\chi_{(0,\frac{1}{n})}}_{M_\varphi}
														= \lim_{n\rightarrow\infty}\norm{g\chi_{(0,\frac{1}{n})}}_{M_\varphi}
														\geqslant \lim_{n\rightarrow\infty}\norm{\psi_{\varepsilon_n}}_{M_\varphi} = 1,
		\end{equation*}
		for a properly choosen subsequence $\{\varepsilon_n\}_{n=1}^\infty$.
	\end{proof}

	\begin{lemma} \label{lemma: L_infty w X, to kopia}
		{\it Let $X$ be a rearrangement invariant space on $(0,\infty)$. If $L_\infty \hookrightarrow X$, then $X$ contains a lattice isometric copy of $\ell_\infty$.}
	\end{lemma}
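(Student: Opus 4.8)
The plan is to bypass the distance machinery of this section and build an explicit lattice isometric copy of $\ell_\infty$ by hand. This has the advantage of covering uniformly even the degenerate case $X = L_\infty$, where $X_a = \{0\}$ so that $\supp(X_a) \neq \supp(X)$ and Theorem~\ref{Hudzik w druga strone} does not directly apply. First I would record that the embedding $L_\infty \hookrightarrow X$ forces $\chi_{(0,\infty)} \in X$; set $c \coloneqq \norm{\chi_{(0,\infty)}}_X$, which is finite by the embedding and strictly positive since $\chi_{(0,\infty)} \neq 0$.

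Next I would partition the half-line into countably many pairwise disjoint sets of infinite measure. Concretely, split $(0,\infty) = \bigcup_{n} (n-1,n]$ into unit intervals and redistribute them through a bijection $\mathbb{N} \to \mathbb{N}\times\mathbb{N}$, obtaining disjoint sets $\{A_k\}_{k=1}^\infty$ with $\bigcup_k A_k = (0,\infty)$ and $m(A_k) = \infty$ for every $k$. Since any two characteristic functions of infinite-measure sets are equimeasurable, rearrangement invariance gives $\norm{\chi_{A_k}}_X = \norm{\chi_{(0,\infty)}}_X = c$; hence $f_k \coloneqq c^{-1}\chi_{A_k}$ satisfies $\norm{f_k}_X = 1$.

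Finally I would define $T \colon \ell_\infty \to X$ by $T\big((a_k)\big) = c^{-1}\sum_k a_k \chi_{A_k}$. Because the $A_k$ are disjoint, $\abs{T((a_k))} = c^{-1}\sum_k \abs{a_k}\chi_{A_k} = T((\abs{a_k}))$, so $T$ is a lattice homomorphism; and for every index $m$ the pointwise estimate $c^{-1}\abs{a_m}\chi_{A_m} \leqslant \abs{T((a_k))} \leqslant c^{-1}\big(\sup_k \abs{a_k}\big)\chi_{(0,\infty)}$ together with the ideal property of $X$ yields $\abs{a_m} \leqslant \norm{T((a_k))}_X \leqslant \sup_k\abs{a_k}$. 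Taking the supremum over $m$ gives $\norm{T((a_k))}_X = \norm{(a_k)}_{\ell_\infty}$ (in particular $T((a_k)) \in X$ for every bounded sequence, again by the ideal property), so $T$ is a lattice isometry onto its image and $X$ contains a lattice isometric copy of $\ell_\infty$. There is no serious obstacle here: the only point that genuinely uses the hypothesis $L_\infty \hookrightarrow X$, rather than merely that $X$ be rearrangement invariant on $(0,\infty)$, is the membership $\chi_{(0,\infty)} \in X$ with $c < \infty$, which is exactly what guarantees both that each $f_k$ has finite norm and that the upper estimate closes.
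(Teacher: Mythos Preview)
Your proof is correct and follows essentially the same approach as the paper: both partition $(0,\infty)$ into countably many pairwise disjoint sets of infinite Lebesgue measure (the paper uses a family $\{N_i\}$ of infinite subsets of $\mathbb{N}$ to regroup the unit intervals, which is exactly your bijection $\mathbb{N}\to\mathbb{N}\times\mathbb{N}$) and then exploit rearrangement invariance to get $\norm{\chi_{A_k}}_X = \norm{\chi_{(0,\infty)}}_X$. The only cosmetic difference is that the paper, having produced the disjoint norm-one sequence $\{f_k\}$ with $\norm{\sum_k f_k}_X = 1$, invokes \cite[Theorem~1]{Hu98} to conclude, whereas you write down the lattice isometry $T$ explicitly and verify the two-sided estimate by hand; this makes your version self-contained and, as you note, handles the case $X_a = \{0\}$ without any appeal to the support condition.
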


	Let us note in addition that the converse is not true, for example, the Zygmund space $e^L$ (recall that $e^L = L_F$, where $F(t) = e^t -1$)
	contains a lattice isometric copy of $\ell_\infty$, but it is clear that $L_\infty \not\hookrightarrow e^L$.
	
	\begin{proof}
		Take $f = \chi_{(0,\infty)}/\norm{\chi_{(0,\infty)}}_X$ and let $\{N_i\}_{i=1}^\infty$ be a pairwise disjoint family of an infinite
		subsets of $\mathbb{N}$ such that $\bigcup_{i=1}^\infty N_i = \mathbb{N}$. Moreover, let $\{\omega_j\}_{j=1}^\infty$ be a sequence of real numbers
		such that $0 = \omega_1 < \omega_2 < ...$ and $\lim_{j \rightarrow \infty} \omega_j = \infty$. Put $\Omega_j^i = (\omega_j,\omega_{j+1})$
		for $i \in \mathbb{N}$ and $j \in N_i$. Consider the following sequence $\left\{f\chi_{\bigcup_{j \in N_i} \Omega_j^i}\right\}_{i=1}^\infty$. It is clear that
		this sequence is pairwise disjoint, $\norm{\sum_{i=1}^\infty f\chi_{\bigcup_{j \in N_i} \Omega_j^i}}_X = 1$ and
		$\norm{f\chi_{\bigcup_{j=1}^\infty \Omega_j^i}}_X = 1$, because $\left(f\chi_{\bigcup_{j \in N_i} \Omega_j^i}\right)^* = f$ for every $i \in \mathbb{N}$
		and $\norm{f}_X = 1$. Consequently, in order to complete the proof we can refer to \cite[Theorem~1]{Hu98}.
	\end{proof}

	\begin{corollary} \label{corollary: 2}
		{\it Let $\varphi$ be a concave function on $(0,\infty)$ such that $\varphi(0^{+}) = 0$. Then the Lorentz space $\Lambda_\varphi$ contains
			a lattice isometric copy of $\ell_\infty$ if and only if $\varphi(\infty) < \infty$.}
	\end{corollary}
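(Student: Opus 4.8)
The plan is to split the biconditional into its two implications, using the embedding $L_\infty \hookrightarrow \Lambda_\varphi$ as the dividing line, exactly as the preceding lemma and corollary suggest. First I would record the two structural facts that make everything run: since $\varphi(0^+) = 0$ the Lorentz norm reduces to $\norm{f}_{\Lambda_\varphi} = \int_0^\infty f^*(t)\,d\varphi(t)$, and the fundamental function of $\Lambda_\varphi$ is $\varphi$ itself. I would also note once and for all that $\Lambda_\varphi$ is a rearrangement invariant space with the Fatou property, that its ideal $(\Lambda_\varphi)_a$ is non-trivial (it contains every bounded function supported on a set of finite measure, e.g. $\chi_{(0,1)}$), and that $\supp((\Lambda_\varphi)_a) = (0,\infty) = \supp(\Lambda_\varphi)$; this guarantees that both Corollary~\ref{corollary: formula na odleglosc} and Theorem~\ref{Hudzik w druga strone} are applicable.

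For the implication ($\Leftarrow$), assume $\varphi(\infty) < \infty$. For $f \in L_\infty$ one has $f^* \leqslant \norm{f}_{L_\infty}$ pointwise, hence $\norm{f}_{\Lambda_\varphi} \leqslant \norm{f}_{L_\infty}\int_0^\infty d\varphi(t) = \varphi(\infty)\norm{f}_{L_\infty}$, so that $L_\infty \hookrightarrow \Lambda_\varphi$. Lemma~\ref{lemma: L_infty w X, to kopia} then immediately produces a lattice isometric copy of $\ell_\infty$ in $\Lambda_\varphi$, and this direction is complete.

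For the implication ($\Rightarrow$) I would argue the contrapositive and show that $\varphi(\infty) = \infty$ forces $\Lambda_\varphi$ to be order continuous. The first observation is that every $f \in \Lambda_\varphi$ satisfies $f^*(\infty) = 0$: otherwise $f^*(\infty) = a > 0$ would give $\norm{f}_{\Lambda_\varphi} \geqslant a\int_0^\infty d\varphi(t) = a\varphi(\infty) = \infty$, a contradiction. By Corollary~\ref{corollary: formula na odleglosc} it then suffices to check that $\lim_{n\to\infty}\norm{f^*\chi_{(0,1/n)\cup(n,\infty)}}_{\Lambda_\varphi} = 0$ for every $f$. Splitting the support and using the triangle inequality, the piece over $(0,1/n)$ tends to $0$ because the finite measure $f^*\,d\varphi$ has vanishing mass on the shrinking sets $(0,1/n)$.

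The heart of the matter, and the step I expect to be the only real obstacle, is the tail piece $\norm{f^*\chi_{(n,\infty)}}_{\Lambda_\varphi}$. Here I would first compute the rearrangement $(f^*\chi_{(n,\infty)})^*(s) = f^*(s+n)$, so that $\norm{f^*\chi_{(n,\infty)}}_{\Lambda_\varphi} = \int_0^\infty f^*(s+n)\,d\varphi(s)$. The temptation to bound this crudely fails precisely because $\varphi(\infty) = \infty$ makes naive estimates of the form $f^*(n)\,\varphi(\cdot)$ indeterminate; the clean way out is dominated convergence with respect to the measure $d\varphi$, using $f^*(s+n) \leqslant f^*(s)$ with $\int_0^\infty f^*\,d\varphi = \norm{f}_{\Lambda_\varphi} < \infty$ as the integrable majorant, while $f^*(s+n) \to f^*(\infty) = 0$ pointwise. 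This yields $\norm{f^*\chi_{(n,\infty)}}_{\Lambda_\varphi} \to 0$, hence $\distE{f, (\Lambda_\varphi)_a} = 0$ for all $f$, i.e. $\Lambda_\varphi = (\Lambda_\varphi)_a$. Since no $f$ can then satisfy $\distE{f,(\Lambda_\varphi)_a} = \norm{f}_{\Lambda_\varphi} = 1$, Theorem~\ref{Hudzik w druga strone} rules out any lattice isometric copy of $\ell_\infty$ (this is of course also immediate from the Lozanovski{\u \i} dichotomy recalled at the start of Section~2), which completes the contrapositive and hence the proof.
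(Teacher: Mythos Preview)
Your proof is correct. The sufficiency direction $(\Leftarrow)$ is exactly the paper's argument: once $\varphi(\infty) < \infty$ gives $L_\infty \hookrightarrow \Lambda_\varphi$, Lemma~\ref{lemma: L_infty w X, to kopia} finishes.

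The necessity direction $(\Rightarrow)$ is where you diverge. The paper dispatches this in one line by citing \cite[Lemma~5.1]{KPS82}, which asserts that $\Lambda_\varphi$ is order continuous when $\varphi(\infty) = \infty$, and then invokes Lozanovski{\u \i}'s dichotomy. You instead \emph{prove} that order continuity directly, using the paper's own Corollary~\ref{corollary: formula na odleglosc}: after observing $f^*(\infty) = 0$, you handle $\norm{f^*\chi_{(0,1/n)}}_{\Lambda_\varphi}$ by absolute continuity of the finite measure $f^*\,d\varphi$ and $\norm{f^*\chi_{(n,\infty)}}_{\Lambda_\varphi} = \int_0^\infty f^*(s+n)\,d\varphi(s)$ by dominated convergence with majorant $f^*$. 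This is a clean, self-contained argument that avoids the external reference and showcases the distance formula developed in \S3; the paper's route is shorter but leans on a black-box citation. Both arrive at the same intermediate conclusion ($\Lambda_\varphi = (\Lambda_\varphi)_a$ when $\varphi(\infty) = \infty$), so the trade-off is brevity versus self-containment.
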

	\begin{proof}
		It follows from \cite[Lemma~5.1]{KPS82} that the space $\Lambda _{\varphi}$ is order continuous if $\varphi(\infty) = \infty$.
		Therefore, due to the well-known Lozanovski\u{\i} result, it does not contain even an order isomorphic copy of $\ell _{\infty}$
		and proves the necessity.
		
		Now, suppose that $\varphi(\infty) < \infty$. But this means that $L_\infty \hookrightarrow X$ and it is enough to use Lemma~\ref{lemma: L_infty w X, to kopia}.
	\end{proof}
	
	Let us add that it may happen that the Lorentz space $\Lambda_\varphi$ contains an order isomorphic copy of $\ell_\infty$ but not an order
	isometric copy of $\ell_\infty$. Indeed, if $\varphi(0^+) > 0$ and $\varphi(\infty) = \infty$, then the space $\Lambda_\varphi$ is not
	order continuous but is strictly monotone. 

	\section{On isometric copies of $\ell_\infty$ in abstract Ces\`aro spaces}
	
	For a Banach function space on $(0,1)$ or $(0,\infty)$ we define the {\it abstract Ces\`aro function space $CX$} as a set
	\begin{equation*}
		CX = \{f \in L_0 \colon C(\abs{f}) \in X \} \text{ with the natural norm } \norm{f}_{CX} = \norm{C(\abs{f})}_X,
	\end{equation*}
	where $C$ denotes the {\it continuous Ces\`aro operator} defined as
	\begin{equation*}
		C \colon f \mapsto C(f)(x) = \frac{1}{x} \int_0^x \abs{f(t)} dt \text{ for } x > 0.
	\end{equation*}
	
	However, in the sequence case, by the {\it discrete Ces\`aro operator} we will understand the operator $C_d$ defined as
	\begin{equation*}
		C_d \colon \{x_n\}_{n=1}^\infty \mapsto \left\{ \frac{1}{n} \sum_{k=1}^n x_k \right\}_{n=1}^\infty,
	\end{equation*}
	and then the corresponding {\it abstract Ces\`aro sequence space $CX$} is defined in an analogous way as above, that is to say,
	as a set of all sequences $x = \{x_n\}_{n=1}^\infty$ such that $C_d(x) \in X$ for which the following norm
	\begin{equation*}
		\norm{x}_{CX} = \norm{C_d(\abs{x})}_X
	\end{equation*}
	is finite
	
	Because it always should be clear from the context which Ces\`aro construction we use, we will abuse the notation
	a little by writing simply $C$ for the continuous as well as the discrete Ces\'aro operator. Moreover, to avoid
	some trivialities and unnecessary complications, let's assume that we are only interested in non-trivial Ces\`aro spaces.
	Incidentally, if the Ces\`aro operator $C$ is bounded on $X$, then $X \hookrightarrow CX$ and, consequently, the space $CX$
	is non-trivial.
	
	The classical Ces\`aro spaces $ces_p$ and $Ces_p$ are only a special case of this general construction $CX$ when $X = \ell_p$
	or $X = L_p$, respectively. We will not say more about the history and the current development of the Cesaro space theory,
	referring to \cite{AM14} and the references given there (see also \cite{Be96}, \cite{AM09}, \cite{ALM17}, \cite{CR16}, \cite{DS07},
	\cite{Ja74}, \cite{KK09}, \cite{KT17}, \cite{KKM21}, \cite{LM15} and \cite{NP10}; cf.~\cite{KMP07}).
	
	Let us now formulate and prove the main result of this section.

	\begin{theorem} \label{CX ma izometryczne l_infty}
		{\it Let $X$ be a rearrangement invariant Banach space on $(0,1)$, $(0,\infty)$ or $\mathbb{N}$ with the Fatou property such that
			the Ces\`aro operator $C$ is bounded on $X$ and the ideal $X_a$ is non-trivial. Suppose there exists an element $f \in X$
			and $0 \leqslant a < b \leqslant \infty$ such that} $\norm{f}_X = \distE{f,X_a} = 1$ {\it and} $\norm{f^*\chi_{(0,a)\cup(b,\infty)}}_{CX} = 1$.
			{\it Then $CX$ contains a lattice isometric copy of $\ell_\infty$}.
	\end{theorem}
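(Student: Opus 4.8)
The plan is to deduce the theorem from the completed Hudzik criterion, Theorem~\ref{Hudzik w druga strone}, applied to the space $CX$. Thus I must produce an element $g \in CX$ with $\norm{g}_{CX} = \distC{g, (CX)_a} = 1$, after first checking that $CX$ satisfies the hypotheses of that theorem. The latter is routine: $CX$ is a Banach ideal space over a $\sigma$-finite measure space, it inherits the Fatou (hence semi-Fatou) property from $X$ because $C$ is a positive operator commuting with monotone limits, and $\supp((CX)_a) = \supp(CX)$ since every bounded function with support of finite measure lies in $(CX)_a$ — this last point uses $X_a \neq \{0\}$ together with $\varphi_X(0^+) = 0$ (itself a consequence of $X_a$ being non-trivial) and the boundedness of $C$.

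The natural candidate is $g = f^*\chi_{(0,a) \cup (b,\infty)}$. By hypothesis $\norm{g}_{CX} = 1$, and since $0 \in (CX)_a$ this already gives $\distC{g, (CX)_a} \leqslant 1$, so everything reduces to the lower bound $\distC{g, (CX)_a} \geqslant 1$. First I would reduce, via Lemma~\ref{lemma 1 distance} applied to the ideal $(CX)_a \subset CX$, to competitors $h \in (CX)_a$ with $0 \leqslant h \leqslant g$; for such $h$ one has $\norm{g - h}_{CX} = \norm{C(g-h)}_X = \norm{Cg - Ch}_X$, where $C$ now acts linearly on non-negative functions. The key structural lemma is that $C$ carries $(CX)_a$ into $X_a$: if $0 \leqslant h \leqslant g$ and $h \in (CX)_a$ then $Ch \in X_a$. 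This is precisely where the boundedness of $C$ is indispensable; approximating $h$ in $CX$-norm by bounded functions with support of finite measure (possible as $h \in (CX)_a$) and applying $C$, one is reduced to showing that the tails $C\chi_{(0,R)} = \min(1, R/x)$ belong to $X_a$, which follows from $\norm{\tfrac1x\chi_{(m,\infty)}}_X \leqslant \norm{C}\,\varphi_X(m)/m \to 0$. Granting this lemma, $Ch \in X_a$, whence $\norm{Cg - Ch}_X \geqslant \distE{Cg, X_a}$, and it remains to prove $\distE{Cg, X_a} \geqslant 1$.

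For this decisive step I would use a norming singular functional for $f^*$. Since $\distE{f^*, X_a} = \distE{f, X_a} = 1 = \norm{f^*}_X$ by Theorem~\ref{fgwiazdka}, the element $f^*$ is normed in the quotient $X/X_a$; by the Hahn--Banach theorem and positivity of $f^*$ there is a positive singular functional $S$ on $X$ with $\norm{S}_{X^*} = 1$ and $S(f^*) = 1$. It then suffices to show $S(Cg) \geqslant 1$, because then $\distE{Cg, X_a} \geqslant S(Cg) \geqslant 1$ (cf.~Lemma~\ref{Lemat z funkcjonalem singularnym}), while $\distE{Cg, X_a} \leqslant \norm{Cg}_X = 1$ forces equality. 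Splitting $Cg = Cg\chi_{(0,a)} + Cg\chi_{(a,b)} + Cg\chi_{(b,\infty)}$: the middle piece is bounded with support of finite measure, hence lies in $X_a$ and is killed by $S$; on $(0,a)$ one has $Cg = Cf^* \geqslant f^*$, so $S(Cg\chi_{(0,a)}) \geqslant S(f^*\chi_{(0,a)})$; and on the tail the estimate $Cg(x) \geqslant (1 - b/x)f^*(x)$, combined with the limiting device $Cg \geqslant (1-\varepsilon)f^*$ on $(b',\infty)$ for $b'$ large and $f^*\chi_{(b,b')} \in X_a$, yields $S(Cg\chi_{(b,\infty)}) \geqslant S(f^*\chi_{(b,\infty)})$. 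Adding the three contributions and using $S(f^*\chi_{(a,b)}) = 0$ gives $S(Cg) \geqslant S(f^*) = 1$, as required.

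The main obstacle, I expect, is the structural lemma $C\big((CX)_a\big) \subseteq X_a$: it is exactly the point where the hypothesis that $C$ be bounded on $X$ (equivalently $\varphi_X(t)/t \to 0$) enters, and it genuinely fails without it. A secondary nuisance is the factor $1 - b/x$ in the tail estimate, which rules out a single pointwise domination $Cg \geqslant f^*$ and forces the limiting argument over $b' \to \infty$; the degenerate endpoints $a = 0$ or $b = \infty$, where only one end of $f^*$ is genuinely singular, are handled by the same computation with the corresponding interval absent.
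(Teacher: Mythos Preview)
Your proof is correct and shares its skeleton with the paper's argument: both take $g = f^*\chi_{(0,a)\cup(b,\infty)}$ as the witness, reduce via Lemma~\ref{lemma 1 distance} to competitors $0 \leqslant h \leqslant g$, invoke the identification $(CX)_a = C(X_a)$ (your ``structural lemma'' is exactly one inclusion of this, which the paper simply cites from \cite{KT17}), use Theorem~\ref{fgwiazdka} to pass from $f$ to $f^*$, and conclude via Hudzik's criterion.

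The genuine divergence is in the final estimate. You stay with $Cg$ and prove $\distE{Cg, X_a} \geqslant 1$ by producing a positive singular functional $S$ norming $f^*$, then splitting $Cg$ over the three intervals; the lack of a global pointwise bound $Cg \geqslant f^*$ forces you into the tail estimate $Cg(x) \geqslant (1-b/x)f^*(x)$ and the limiting argument $b' \to \infty$. The paper instead observes that $f^*\chi_{(a,b)} \in (CX)_a$ and simply adds it to both $g$ and the competitor $h$, reducing at once to $\distE{C(f^*), X_a}$; since $f^*$ is non-increasing one has $C(f^*) \geqslant f^*$ everywhere, so Corollary~\ref{colnierownosc} gives $\distE{C(f^*), X_a} \geqslant \distE{f^*, X_a} = 1$ in one stroke. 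The paper's trick is more economical --- no Hahn--Banach, no piecewise analysis, no $\varepsilon$-limits --- precisely because passing from $g$ to $f^*$ restores the global domination that your approach has to recover by hand on the tail. Your route, on the other hand, makes the role of the singular part of $X^*$ more explicit, which connects nicely with the material around Proposition~\ref{Funkcjonal singularny a kopia}.
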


	The proof of the above theorem makes strong use of Theorem~\ref{fgwiazdka} from \S3 and Theorem~16 from \cite{KT17}. Note that Theorem~16 from
	\cite{KT17} has been proved for rearrangement invariant function, but not sequence, spaces. For the missing proof, we refer to Appendix.

	\begin{proof}
		Let $f \in X$ be as above. Put $A = (0,a) \cup (b,\infty)$ and $A^c = (a,b)$. In view of Theorem \ref{fgwiazdka}, we have the equality
		\begin{equation*}
		\distE{f^*, X_a} = \distE{f, X_a} = 1.
		\end{equation*}
		Note also that $f^*\chi_{A^c} \in (CX)_a$ (see \cite[Lemma~8]{KT17} in the case of function spaces; however, simple modification of this
		argument works in the sequence case as well). Consequently, taking $h = f^*\chi_{A}$, we have
		\begin{align*}
		\distC{h, (CX)_a} & = \inf \{ \norm{h - g}_{CX} \colon g \in (CX)_a \text{ and } 0 < g \leqslant h \} \\
						  & = \inf \{ \norm{Ch - Cg}_{X} \colon g \in C(X_a) \text{ and } 0 < g \leqslant h \} \\
						  &		\quad\quad\quad (\text{because } h - g \geqslant 0 \text{ and } CX_a = C(X_a) \text{, see \cite[Theorem 16]{KT17}}) \\
						  & = \inf \{ \norm{C(h+f^*\chi_{A^c}) - C(g+f^*\chi_{A^c})}_{X} \colon C(g) \in X_a \text{ and } 0 < g \leqslant h \} \\
						  & \geqslant \inf \{ \norm{C(f^*) - C(g')}_{X} \colon g' \in CX \text{ and } C(g') \in X_a \} \\
						  &		\quad\quad\quad (\text{since } g+f^*\chi_{A^c} \in C(X_a) ) \\
						  & \geqslant \inf \{ \norm{C(f^*) - {g''}}_{X} \colon {g''} \in X_a \} \\
						  & 	\quad\quad\quad (\text{we take the infimum on a larger set}) \\
						  & = \distE{C(f^*), X_a} \geqslant \distE{f^*, X_a} = 1.
		\end{align*}
		where the last inequality follows from Corollary \ref{colnierownosc} and the fact that $f^* \leqslant C(f^*)$.
		Now, since $\supp((CX)_a) = \supp(X)$ (see, again, \cite[Lemma~8]{KT17}) we can apply Theorem \ref{Twierdzenie Hudzika} and finish the proof.
	\end{proof}

	It should be probably mentioned here that Theorem \ref{CX ma izometryczne l_infty} covers all previous results
	concerning the problem of existence of a lattice isometric copy of $\ell_\infty$ in Ces\`aro spaces $CX$. To be more precise, Kami\'nska and
	Kubiak \cite{KK09} considered this problem for $X = \ell_F$ and Kiwerski and Kolwicz \cite{KK18b} for $X = L_F$. A common feature of these
	results is a certain assumption about the Orlicz class, namely, about the closedness of the Orlicz class under the action of the Ces\`aro
	operator, that is to say,
	\begin{equation*}
		\left\{ \{x_n\}_{n=1}^\infty \in \ell_F \colon \sum_{n=1}^\infty F(x_n) < \infty \right\}
					\subset \left\{ \{x_n\}_{n=1}^\infty \in \ell_F \colon \sum_{n=1}^\infty F\left( \frac{1}{n} \sum_{k=1}^n x_k \right) < \infty \right\},
	\end{equation*}
	in the sequence case and the modular-type inequality of the form
	\begin{equation*}
		\int_0^\infty F\left(\frac{1}{x}\int_0^x\abs{f(t)}dt\right)dx \leqslant M\int_0^\infty F(\abs{f(x)})dx, 
	\end{equation*}
	in the function case (which looks a little like the boundedness\footnote{Orlicz class is not even a linear space,
	therefore is makes little sense to speak about such an object as bounded operator on it.}
	of the Ces\`aro operator on the Orlicz class or like the Hardy inequality with the Orlicz function $F$ instead of the power function; cf.~\cite[\S3]{KMP07}).
	Note that both assumptions imply the boundedness of the Ces\`aro operator on the space $\ell_F$ (see \cite[Proposition~2]{KK09}) and, respectively, $L_F$ (see \cite{KK18b}).
	Our Theorem~\ref{CX ma izometryczne l_infty}, at least formally, improves both of these results. However, it is not so easy to see that
	Theorem~\ref{CX ma izometryczne l_infty} does in fact imply the earlier results from \cite{KK09} and \cite{KK18b}. To show
	this, we will take an even more general point of view, because we will consider the Calder{\'o}n--Lozanovski{\u \i} spaces
	$X_F$ (recall, that the spaces $(L_1)_F$ and $(\ell_1)_F$ coincide, up to the equality of norms, with the Orlicz spaces $L_F$
	and $\ell_F$, respectively).
	
	Let us also remind that by the {\it Calder{\'o}n--Lozanovski{\u \i} class} we understand the set $\{f \in X_F \colon \norm{F(\abs{f})}_X < \infty\}$
	(cf.~\cite[\S3]{Ma89} for some comments and key properties in the particular case when $X = L_1$).
	
	\begin{corollary} \label{corollary: kopia w funkcyjnym Calderonie}
		{\it Let $X$ be an order continuous rearrangement invariant function space with the Fatou property and let $F$ be an Orlicz
			function. Assume that there exists $M > 0$ such that for all functions, say $f$, from the Calder{\'o}n--Lozanovski{\u \i} class
			we have the following inequality:
		}
			\begin{equation} \tag{${\mathscr M}$} \label{nierownosc modularna}
				\norm{F(C(\abs{f}))}_X \leqslant M\norm{F(\abs{f})}_X.
			\end{equation}
			{\it If $F$ does not satisfy the $\Delta_2^X$-condition\footnote{Let's just mention that the $\Delta_2^X$-condition is an appropriate
					modification of the well-known condition $\Delta_2$ from the Orlicz space theory. Roughly speaking, it is defined in such a way
					as to correspond to the order continuity of the Calder{\'o}n--Lozanovski{\u \i} space $X_F$. For a more precise definitions we refer,
					for example, to \cite[p.~642]{HKM96}.}, then the space $C(X_F)$ contains a lattice isometric copy of $\ell_\infty$.}
	\end{corollary}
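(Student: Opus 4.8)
The plan is to apply Theorem~\ref{CX ma izometryczne l_infty} with the underlying rearrangement invariant space taken to be the Calder\'on--Lozanovski{\u \i} space $X_F$, so that the Ces\`aro space produced there is exactly $C(X_F)$. Thus it is enough to verify the four hypotheses of that theorem for $X_F$: (a) $X_F$ is rearrangement invariant with the Fatou property; (b) the Ces\`aro operator $C$ is bounded on $X_F$; (c) the ideal $(X_F)_a$ is non-trivial; and, crucially, (d) there exist $f \in X_F$ and $0 \leqslant a < b \leqslant \infty$ with $\norm{f}_{X_F} = \dist{f,(X_F)_a} = 1$ (the distance being taken in $X_F$) and $\norm{f^*\chi_{(0,a)\cup(b,\infty)}}_{C(X_F)} = 1$.

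Conditions (a)--(c) are the routine part. Property (a) was already recorded when the spaces $X_F$ were introduced: if $X$ is rearrangement invariant with the Fatou property, then so is $X_F$. For (b) I would use the modular inequality \eqref{nierownosc modularna}: if $\norm{f}_{X_F} \leqslant 1$, then by the Fatou property $\norm{F(\abs{f})}_X \leqslant 1$, so $f$ belongs to the Calder\'on--Lozanovski{\u \i} class and \eqref{nierownosc modularna} gives $\norm{F(C(\abs{f}))}_X \leqslant M$; since $F$ is convex with $F(0) = 0$ we have $F(t/\lambda) \leqslant F(t)/\lambda$ for every $\lambda \geqslant 1$, whence $\norm{F(C(\abs{f})/\lambda)}_X \leqslant M/\lambda \leqslant 1$ for $\lambda = \max\{M,1\}$, that is, $\norm{C(\abs{f})}_{X_F} \leqslant \max\{M,1\}$. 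Finally (c) follows from the order continuity of $X$: any bounded function supported on a set of finite measure is order continuous in $X_F$, so $(X_F)_a \neq \{0\}$.

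The whole weight of the proof rests on (d), and this is where the failure of the $\Delta_2^X$-condition enters, since this condition was designed precisely to characterise the order continuity of $X_F$. The plan is first to manufacture the witness: from $F \notin \Delta_2^X$ one produces, in the usual Calder\'on--Lozanovski{\u \i} manner, a sequence of disjointly supported non-negative ``bumps'' $u_n$ with $\sum_n \norm{F(u_n)}_X \leqslant 1$ along which no fixed dilation can be absorbed by the modular, and one sets $f$ equal to the non-increasing rearrangement of $\sum_n u_n$ (by Theorem~\ref{fgwiazdka}, passing to $f^*$ does not affect the distance to $(X_F)_a$). This should yield $\norm{f}_{X_F} = \dist{f,(X_F)_a} = 1$; moreover, by Corollary~\ref{corollary: formula na odleglosc}, this distance equals $\lim_{n}\norm{f^*\chi_{(0,1/n)\cup(n,\infty)}}_{X_F}$, so the singular mass of $f^*$ piles up exactly at the two ends $0$ and $\infty$, which is what makes a cut of the form $A = (0,a)\cup(b,\infty)$ the natural candidate.

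It then remains to choose $a$ and $b$ so that $\norm{f^*\chi_A}_{C(X_F)} = 1$. One inequality is free: running the estimate from the proof of Theorem~\ref{CX ma izometryczne l_infty} (which uses $f^* \leqslant C(f^*)$ and Corollary~\ref{colnierownosc}) already gives $\norm{f^*\chi_A}_{C(X_F)} \geqslant \dist{f^*\chi_A,(C(X_F))_a} \geqslant \dist{f^*,(X_F)_a} = 1$ for every admissible $A$. The genuine difficulty --- and the place where the argument must be upgraded from an isomorphic to an isometric statement --- is the matching bound $\norm{f^*\chi_A}_{C(X_F)} \leqslant 1$. Here the constant $M$ from \eqref{nierownosc modularna} is of no direct help, because it only controls the Ces\`aro average up to the factor $\max\{M,1\}$, whereas exact equality is needed; the bumps $u_n$ must therefore be calibrated so that averaging inflates neither the $X_F$-norm nor the distance to $(X_F)_a$ of the singular part. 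I expect this to force a split into the case where $\Delta_2^X$ fails ``at infinity'' (large values, with $f^*$ singular near $0$, where one cuts with $b = \infty$) and the case where it fails ``at zero'' (a flat tail $f^*(\infty) > 0$, where the distance can be computed through Corollary~\ref{corollary: de Jonge} as $f^*(\infty)\varphi_{X_F}(\infty)$ and averaging preserves the tail value). Once $f$, $a$ and $b$ are arranged, all hypotheses of Theorem~\ref{CX ma izometryczne l_infty} hold and $C(X_F)$ contains a lattice isometric copy of $\ell_\infty$, as claimed.
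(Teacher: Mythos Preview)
Your overall strategy --- verify the hypotheses of Theorem~\ref{CX ma izometryczne l_infty} for the space $X_F$ --- is exactly the paper's, and parts (a)--(c) are fine. The genuine gap is in (d), and specifically in your claim that ``the constant $M$ from \eqref{nierownosc modularna} is of no direct help'' for the upper bound $\norm{f^*\chi_A}_{C(X_F)} \leqslant 1$. In fact $M$ is precisely what makes the argument work, once combined with the order continuity of $X$ (not of $X_F$). The paper does not calibrate the bumps at all; it takes the witness $f$ as a black box (via \cite{HKM96} and Theorem~\ref{Hudzik w druga strone}), notes that $\norm{F(f^*)}_X \leqslant 1$ since $\norm{f^*}_{X_F}=1$, and then uses $X \in (OC)$ to choose the cut so that the \emph{modular} is small: e.g.\ in the first case one picks $b>0$ with $\norm{F(f^*)\chi_{(0,b)}}_X \leqslant 1/M$, and then \eqref{nierownosc modularna} gives
\[
\norm{F\bigl(C(f^*\chi_{(0,b)})\bigr)}_X \leqslant M\,\norm{F(f^*)\chi_{(0,b)}}_X \leqslant 1,
\]
which is exactly $\norm{f^*\chi_{(0,b)}}_{C(X_F)} \leqslant 1$. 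So the constant $M$ is absorbed by shrinking the support at the modular level, not at the norm level; your attempt to absorb it via $F(t/\lambda)\leqslant F(t)/\lambda$ only yields a bound $\max\{M,1\}$ because you are working with norms.

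There is a second point you have not addressed. The paper's case split is not ``$\Delta_2$ fails at $0$ vs.\ at $\infty$'' but rather ``$f^*\chi_{(c,\infty)} \in (X_F)_a$'' versus ``$f^*\chi_{(c,\infty)} \notin (X_F)_a$ and $f^*\chi_{(0,c)} \in (X_F)_a$''. In the second case the \emph{lower} bound $\norm{f^*\chi_{(b,\infty)}}_{C(X_F)} \geqslant 1$ is not free: one must show $\norm{F\bigl(C((1+\varepsilon)f^*\chi_{(b,\infty)})\bigr)}_X > 1$ for every $\varepsilon>0$, and this uses the pointwise estimate $C(f^*\chi_{(b,\infty)})(x) \geqslant (1-b/x)f^*(x)$ together with $\norm{f^*\chi_{(B,\infty)}}_{X_F}=1$ for large $B$. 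Your appeal to Corollary~\ref{corollary: de Jonge} in this case is not justified, since that corollary requires $L_\infty \hookrightarrow X_F$ and condition $({\mathscr D}_\infty)$, neither of which is assumed here.
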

	\begin{proof}
		Since $F \notin \Delta_2^X$, it follows from Theorem~1 and Theorem~2 in \cite{HKM96} that the Calder{\'o}n--Lozanovski{\u \i}
		space $X_F$ contains a lattice isometric copy of $\ell_\infty$. Therefore, in view
		of Theorem~\ref{Hudzik w druga strone} (let us note the fact that $\supp((X_F)_a) = \supp(X_F)$) and Theorem~\ref{fgwiazdka},
		there exists a function $f \in X_F$ such that
		\begin{equation*}
			\norm{f}_{X_F} = d_{X_F}(f, (X_F)_a) = d_{X_F}(f^*, (X_F)_a) = 1.
		\end{equation*}
		Because the space $X_F$ is rearrangement invariant, so $\norm{f^*}_{X_F} =1$ and, consequently, $\norm{F(f^*)}_X \leqslant 1$.
		Now, we divide the proof into two parts\footnote{It follows from Theorem~\ref{Hudzik w druga strone} that a rearrangement
			invariant space $X$ on $(0,\infty)$ with $\supp(X_a) = (0,\infty)$ contains a lattice isometric copy of $\ell_\infty$
			if and only if there is a function, say $f$, such that $\norm{f} = \distE{f,X_a} = 1$. In many situations (for example, if
			$X$ is a Calder{\'o}n--Lozanovski{\u \i} space) the function $f$ is given in a simple form, i.e., $f = \sum_{n=1}^\infty f_n$,
			where $f_n = a_n \chi_{\Omega_n}$, $a_n \in \mathbb{R}_{+}$, $\Omega_n$ are a pairwise disjoint measurable subsets of $(0,\infty)$
			and $f_n \in S(X)$ for $n \in \mathbb{N}$. In this case, however, we have essentially two possibilities, namely either $f$
			is bounded (and has a support of infinite measure) or $f$ is unbounded and has a support of a finite measure. Indeed,
			it is easy to see that
			\begin{equation*}
				1 \leqslant \distE{\sum_{k=1}^\infty f_{n_k}, X_a} \leqslant \distE{\sum_{n=1}^\infty f_{n}, X_a} = 1,
			\end{equation*}
			that is to say, the subsequence $\{f_{n_k}\}_{k=1}^\infty$ still builds a lattice isometric copy of $\ell_\infty$. Therefore,
			if we can find a subsequence, say $\{f_{n_k}\}_{k=1}^\infty$, which is uniformly bounded, then it is enough to put $f = \sum_{k=1}^\infty f_{n_k}$.
			Clearly, $f \in L_\infty$. On the other hand, if some subsequence $\{f_{n_k}\}_{k=1}^\infty$ is unbounded, then since
			$X \hookrightarrow L_1 + L_\infty$, so (taking further subsequence, if needed) we see that $\sum_{k=1}^\infty m(\Omega_{n_k}) < \infty$.
			}.
		
		$1^\circ$. Suppose that $f^*\chi_{(c,\infty)} \in (X_F)_a$ for some (equivalently, for all) $c > 0$. Consequently, we have that
		\begin{equation*}
			1 = d_{X_F}(f^*, (X_F)_a) = \lim\limits_{n \rightarrow \infty}\norm{f^*\chi_{(0,\frac{1}{n})\cup (n,\infty)}}_{X_F}
			  = \lim\limits_{n \rightarrow \infty}\norm{f^*\chi_{(0,\frac{1}{n})}}_{X_F},
		\end{equation*}
		where the second equality follows from Theorem~\ref{thm: bardziej konstruktywna odleglosc}. Since $X \in (OC)$, so we can find $b > 0$ with
		\begin{equation*}
			\norm{(F(f^*))\chi_{(0,b)}}_X \leqslant \frac{1}{M},
		\end{equation*}
		where the constant $M > 0$ is from the condition (\ref{nierownosc modularna}). Therefore, we have the following inequalities
		\begin{equation*}
			\norm{F(C(f^*\chi_{(0,b)}))}_X \leqslant M \norm{F(f^*\chi_{(0,b)})}_X = M\norm{(F(f^*))\chi_{(0,b)}}_X \leqslant 1.
		\end{equation*}
		But this means that $\norm{f^*\chi_{(0,b)}}_{C(X_F)} \leqslant 1$. What's more, thanks to our assumptions, $f^*\chi_{(b,\infty)} \in (X_F)_a$,
		whence
		\begin{equation*}
			1 = d_{X_F}(f^*, (X_F)_a) \leqslant \norm{f^* - f^*\chi_{(b,\infty)}}_{X_F}
			  = \norm{f^*\chi_{(0,b)}}_{X_F} \leqslant \norm{C(f^*\chi_{(0,b)})}_{X_F} \leqslant 1,
		\end{equation*}
		that is, $\norm{C(f^*\chi_{(0,b)})}_{X_F} = 1$. Now we can apply Theorem~\ref{CX ma izometryczne l_infty} and finish the proof of this part.
		
		$2^\circ$. Assume that $f^*\chi_{(c,\infty)} \in X_F \setminus (X_F)_a$ and $f^*\chi_{(0,c)} \in (X_F)_a$ for some (equivalently, for all) $c > 0$.
		Referring once again to Theorem~\ref{thm: bardziej konstruktywna odleglosc}, we have the following equalities
		\begin{equation*}
			1 = d_{X_F}(f^*, (X_F)_a) = \lim\limits_{n \rightarrow \infty}\norm{f^*\chi_{(0,\frac{1}{n})\cup (n,\infty)}}_{X_F}
			  = \lim\limits_{n \rightarrow \infty}\norm{f^*\chi_{(n,\infty)}}_{X_F}.
		\end{equation*}
		Just as before, since $X \in (OC)$, so
		\begin{equation*}
		\norm{(F(f^*))\chi_{(b,\infty)}}_X \leqslant \frac{1}{M} \text{ for some } b > 0 \text{ big enough},
		\end{equation*}
		where $M > 0$ is from the condition (\ref{nierownosc modularna}). Moreover, using the condition (\ref{nierownosc modularna}), we get
		\begin{equation*}
			\norm{F(C(f^*\chi_{(b,\infty)}))}_X \leqslant M \norm{F(f^*\chi_{(b,\infty)})}_X = M\norm{(F(f^*))\chi_{(b,\infty)}}_X \leqslant 1.
		\end{equation*}
		that is, $\norm{f^*\chi_{(b,\infty)}}_{C(X_F)} \leqslant 1$. Now, we claim that
		\begin{equation} \tag{$=$} \label{warunek rownosc}
			\norm{f^*\chi_{(b,\infty)}}_{C(X_F)} = 1.
		\end{equation}
		To prove (\ref{warunek rownosc}) it is enough to show that
		\begin{equation*}
			\norm{F(C((1+\varepsilon)f^*\chi_{(b,\infty)}))}_X > 1 \text{ for all } \varepsilon > 0.
		\end{equation*}
		Take $\varepsilon > 0$. Note that
		\begin{equation*}
			C\left((1+\varepsilon)f^*\chi_{(b,\infty)}\right)(x) = (1+\varepsilon) \frac{1}{x}\int_b^x f^*(t)dt
																			\geqslant (1+\varepsilon)\left(1 - \frac{b}{x}\right)f^*(x),
		\end{equation*}
		for $x \geqslant b$. Moreover, there exists $B > b$ such that
		\begin{equation*}
			(1+\varepsilon)\left(1 - \frac{b}{x}\right)f^*(x) \geqslant \left(1 + \frac{\varepsilon}{2}\right)f^*(x) \text{ for } x \geqslant B.
		\end{equation*}
		In consequence, we have
		\begin{equation*}
			\norm{F(C((1+\varepsilon)f^*\chi_{(b,\infty)}))}_X \geqslant \norm{F(C((1+\varepsilon)f^*\chi_{(B,\infty)}))}_X
									\geqslant \norm{F\left(\left(1+\frac{\varepsilon}{2}\right)f^*\chi_{(B,\infty)}\right)}_X > 1,
		\end{equation*}
		where the last inequality follows from the fact that $\norm{f^*\chi_{(B,\infty)}}_{X_F} = 1$ (to see this just note that due to our assumptions
		$f^*\chi_{(0,B)} \in (X_F)_a$ and then $1 = d_{X_F}(f^*, (X_F)_a) \leqslant \norm{f^* - f^*\chi_{(0,B)}}_{X_F} = \norm{f^*\chi_{(B,\infty)}}_{X_F} \leqslant 1$)
		and the definition of the Luxemburg--Nakano norm. But this proves the claim (\ref{warunek rownosc}).
		Again, we are left to use Theorem~\ref{CX ma izometryczne l_infty}. We have therefore completed the proof.
	\end{proof}

	\begin{corollary} \label{corollary: kopia w ciagowym calderonie}
		{\it Let $X$ be an order continuous rearrangement invariant sequence space with the Fatou property and let $F$ be an Orlicz
			function. Assume that the Calder{\'o}n--Lozanovski{\u \i} class is closed under the Ces\`aro operator $C$. If $F$ vanishes
			only at zero and does not satisfy the $\Delta_2(0)$-condition, then the space $C(X_F)$ contains a lattice isometric copy of $\ell_\infty$.}
	\end{corollary}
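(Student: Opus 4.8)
The plan is to mirror the proof of Corollary~\ref{corollary: kopia w funkcyjnym Calderonie}, exploiting the fact that the sequence setting collapses the two cases there into a single one. First I would invoke the sequence counterpart of the results of \cite{HKM96}: since $F$ vanishes only at zero and $F \notin \Delta_2(0)$, the Calder\'on--Lozanovski{\u\i} space $X_F$ is not order continuous and, in fact, contains a lattice isometric copy of $\ell_\infty$. Feeding this into Theorem~\ref{Hudzik w druga strone} (note that $\supp((X_F)_a) = \mathbb{N} = \supp(X_F)$ and that $X_F$ inherits the Fatou, hence semi-Fatou, property from $X$) together with the sequence version of Theorem~\ref{fgwiazdka}, I would produce a non-increasing element $f = f^* \in X_F$ with $\norm{f^*}_{X_F} = d_{X_F}(f^*, (X_F)_a) = 1$; in particular $\norm{F(f^*)}_X \leqslant 1$.

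The decisive simplification comes next. Because $X$ is order continuous we have $X = X_a \hookrightarrow c_0$, so $F(f^*) \in X$ forces $F(f_n^*) \to 0$, and since $F$ vanishes only at zero this gives $f^*(\infty) = 0$. Thus there is no analogue of the \enquote{$f^*(\infty) > 0$} cases: every finitely supported sequence lies in $(X_F)_a$, so $f^*\chi_{(0,N)} \in (X_F)_a$ for each $N$, and Corollary~\ref{corollary: formula na odleglosc} reduces the distance to the tail,
\[
1 = d_{X_F}(f^*, (X_F)_a) = \lim_{N \to \infty} \norm{f^*\chi_{(N,\infty)}}_{X_F}.
\]
I would then aim to show that $\norm{f^*\chi_{(N,\infty)}}_{C(X_F)} = 1$ for $N$ large, which lets me close the argument with Theorem~\ref{CX ma izometryczne l_infty} applied with $a = 0$ and $b = N$.

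For the upper bound I would avoid any explicit modular inequality and use closedness directly: since $\norm{F(f^*)}_X < \infty$, closedness of the Calder\'on--Lozanovski{\u\i} class under $C$ gives $F(C(f^*)) \in X$, and as $F(C(f^*\chi_{(N,\infty)})) \leqslant F(C(f^*))$ decreases pointwise to $0$ (for fixed $n$ the left-hand side vanishes once $N \geqslant n$), order continuity of $X$ yields $\norm{F(C(f^*\chi_{(N,\infty)}))}_X \to 0$, hence $\leqslant 1$ for large $N$. For the matching lower bound I would use the discrete Ces\`aro estimate $C(f^*\chi_{(N,\infty)})(n) \geqslant (1 - N/n)f^*(n)$, choose $B > N$ so that $(1+\varepsilon)(1 - N/n) \geqslant 1 + \varepsilon/2$ for $n \geqslant B$, and compare with $\norm{f^*\chi_{(B,\infty)}}_{X_F} = 1$ (the truncation $f^*\chi_{(0,B)} \in (X_F)_a$ forces $\geqslant 1$, while the ideal property forces $\leqslant 1$) to conclude $\norm{(1+\varepsilon)f^*\chi_{(N,\infty)}}_{C(X_F)} > 1$ for every $\varepsilon$. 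The main obstacle I anticipate is the one hypothesis of Theorem~\ref{CX ma izometryczne l_infty} not yet verified, namely the boundedness of $C$ on $X_F$; in the function case this was handed to us for free by the modular inequality, whereas here it has to be extracted from the mere closedness of the class, generalizing \cite[Proposition~2]{KK09} (for instance by a gliding-hump argument that would contradict closedness were $C$ unbounded). Making that implication precise in the abstract $X_F$ setting is the step requiring the most care.
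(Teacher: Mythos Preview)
Your proposal follows essentially the same route as the paper's proof: obtain $f^*$ with unit norm and unit distance to $(X_F)_a$, use closedness of the Calder\'on--Lozanovski{\u\i} class together with order continuity of $X$ to force $\norm{f^*\chi_{\{n,n+1,\dots\}}}_{C(X_F)} \leqslant 1$ for large $n$ (the paper does this via the pointwise inequality $C(f^*\chi_{\{n,\dots\}}) \leqslant (C(f^*))\chi_{\{n,\dots\}}$ rather than your dominated-convergence variant, but the two are interchangeable), then run the discrete analogue of the lower-bound computation $(=)$ from Corollary~\ref{corollary: kopia w funkcyjnym Calderonie} and invoke Theorem~\ref{CX ma izometryczne l_infty}. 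The one point you single out as requiring care --- that boundedness of $C$ on $X_F$ is a hypothesis of Theorem~\ref{CX ma izometryczne l_infty} which must be extracted from mere closedness of the class, generalizing \cite[Proposition~2]{KK09} --- is in fact glossed over in the paper's own proof, so you are being more scrupulous here than the paper itself.
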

	\begin{proof}
		To begin with, since $X = X_a \hookrightarrow c_0 \hookrightarrow \ell_\infty$ and the Orlicz function $F$ vanishes only at 0,
		it follows that the condition $\Delta_2(0)$ is equivalent to the condition $\delta_2^X$ (see \cite[Lemma~4]{HN05})\footnote{We
			leave the definition of the condition $\delta_2^X$ due to the fact that it concerns the Musielak--Orlicz functions,
			which we will not consider here. At the same time, we refer for details to \cite[p.~525]{FH99}.}.
		Due to the lack of the $\delta_2^X$-condition and Lemma~2.4 from \cite{FH99}, the Calder{\'o}n--Lozanovski{\u \i} space $X_F$
		contains a lattice isometric copy of $\ell_\infty$ (note only that the results from \cite{FH99} have been proven for a wider
		than $X_F$ class of spaces, the so-called generalized Calder{\'o}n--Lozanovski{\u \i} spaces $X_M$, in which the Orlicz function
		$F$ is replaced by the much more general Musielak--Orlicz function $M$). Exactly as in the function' case
		(cf. Corollary~\ref{corollary: kopia w funkcyjnym Calderonie}), citing Theorem~\ref{Twierdzenie Hudzika} along with Theorem~\ref{fgwiazdka},
		we can find a sequence $f \in X_F$ with
		\begin{equation*}
			\norm{f}_{X_F} = d_{X_F}(f, (X_F)_a) = d_{X_F}(f^*, (X_F)_a) = 1.
		\end{equation*}
		What's more $\norm{f^*}_{X_F} =1$ and, consequently, $\norm{F(f^*)}_X \leqslant 1$. Now, because the Calder{\'o}n--Lozanovski{\u \i}
		class is closed under the Ces\`aro operator $C$ we conclude that $\norm{F(C(f^*))}_{X_F} < \infty$. Since $X \in (OC)$, there
		exists $n \in \mathbb{N}$ such that
		\begin{equation*}
			\norm{(F(C(f^*)))\chi_{\{n, n+1, ...\}}}_{X} \leqslant 1.
		\end{equation*}
		Moreover, it is not hard to see that
		\begin{equation*}
			C(f^*\chi_{\{n, n+1, ...\}}) \leqslant (C(f^*))\chi_{\{n, n+1, ...\}},
		\end{equation*}
		whence also
		\begin{equation*}
			\norm{F\left(C(f^*\chi_{\{n, n+1, ...\}})\right)}_{X} \leqslant \norm{F\left((C(f^*))\chi_{\{n, n+1, ...\}}\right)}_{X} \leqslant 1.
		\end{equation*}
		But this means that $\norm{f^*\chi_{\{n, n+1, ...\}}}_{C(X_F)} \leqslant 1$. Note also that $f^*\chi_{\{1, 2, ..., n-1\}} \in (X_F)_a$, so
		\begin{equation*}
			1 = d_{X_F}(f^*, (X_F)_a) \leqslant \norm{f - f^*\chi_{\{1, 2, ..., n-1\}}}_{X_F} = \norm{f^*\chi_{\{n, n+1, ...\}}}_{X_F} \leqslant 1.
		\end{equation*}
		Now, after some obvious modifications (such as converting integrals with sums or using the discrete Ces\`aro operator in place of his continuous
		counterpart) in the proof of equality (\ref{warunek rownosc}) from Corollary~\ref{corollary: kopia w funkcyjnym Calderonie}, we can show that
		\begin{equation*}
			\norm{f^*\chi_{\{n, n+1, ...\}}}_{C(X_F)} = 1.
		\end{equation*}
		Keeping in mind that the order continuous part of a rearrangement invariant sequence space is always non-trivial we can complete the proof
		by referring once again to Theorem~\ref{CX ma izometryczne l_infty}.
	\end{proof}

	The assumption that appeared in Theorem~\ref{CX ma izometryczne l_infty} may at first glance look quite wishfull, but it follows from the proof
	of the next result that rather natural subclass of the class of rearrangement invariant spaces satisfies this condition.
	
	\begin{proposition}
		{\it Let $X$ be a rearrangement invariant space on $(0,\infty)$ with the Fatou property such that $L_\infty \hookrightarrow X$
			and the Ces\`aro operator is bounded on $X$. Suppose also that the space $X$ satisfies the condition $({\mathscr D}_\infty)$
			from Corollary~\ref{corollary: de Jonge}. Then the space $CX$ contains a lattice isometric copy of $\ell_\infty$.}
	\end{proposition}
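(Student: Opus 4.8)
The plan is to verify the hypotheses of Theorem~\ref{CX ma izometryczne l_infty} by producing an explicit witness $f$ together with a suitable pair $0 \leqslant a < b \leqslant \infty$. First I would record two preliminary observations. Since $L_\infty \hookrightarrow X$, the constant function $\chi_{(0,\infty)}$ belongs to $X$ and, by the Fatou property, $\varphi_X(\infty) = \norm{\chi_{(0,\infty)}}_X < \infty$. Moreover, the condition $({\mathscr D}_\infty)$ applied to any bounded function supported on a set of finite measure shows at once that $X_a \neq \{0\}$, so the ideal $X_a$ is non-trivial. Hence all the standing assumptions of Theorem~\ref{CX ma izometryczne l_infty} (rearrangement invariance, the Fatou property, boundedness of $C$, and non-triviality of $X_a$) are already in force, and only the existence of the required element remains to be settled.

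For the witness I would take the normalized constant function
\begin{equation*}
	f = \frac{1}{\varphi_X(\infty)}\chi_{(0,\infty)},
\end{equation*}
which clearly satisfies $\norm{f}_X = 1$ and $f^* = f$, so that $f^*(\infty) = 1/\varphi_X(\infty)$. Because $X$ fulfils exactly the hypotheses of Corollary~\ref{corollary: de Jonge}, the distance of $f$ from $X_a$ can be computed by the de Jonge formula:
\begin{equation*}
	\distE{f, X_a} = f^*(\infty)\varphi_X(\infty) = \frac{1}{\varphi_X(\infty)} \cdot \varphi_X(\infty) = 1.
\end{equation*}
Thus $\norm{f}_X = \distE{f, X_a} = 1$, which is the first of the two conditions demanded by Theorem~\ref{CX ma izometryczne l_infty}.

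It remains to choose the pair $(a,b)$ and to verify the second condition. I would set $a = 0$ and fix any $b \in (0,\infty)$, so that $f^*\chi_{(0,a)\cup(b,\infty)} = \frac{1}{\varphi_X(\infty)}\chi_{(b,\infty)}$. A direct computation gives, for the Ces\`aro operator,
\begin{equation*}
	C\!\left(f^*\chi_{(b,\infty)}\right)(x) = \frac{1}{\varphi_X(\infty)}\left(1 - \frac{b}{x}\right)\chi_{(b,\infty)}(x),
\end{equation*}
a function which vanishes on $(0,b)$ and increases to the supremum $1/\varphi_X(\infty)$ as $x \to \infty$. The key point is that this supremum is approached on a set of infinite measure: on the one hand $C(f^*\chi_{(b,\infty)}) \leqslant \frac{1}{\varphi_X(\infty)}\chi_{(0,\infty)}$ gives $\norm{f^*\chi_{(b,\infty)}}_{CX} \leqslant 1$, while on the other hand, for any $\varepsilon \in (0,1)$ one has $C(f^*\chi_{(b,\infty)}) \geqslant \frac{1-\varepsilon}{\varphi_X(\infty)}\chi_{(b/\varepsilon,\infty)}$, and since $\chi_{(b/\varepsilon,\infty)}$ is equimeasurable with $\chi_{(0,\infty)}$ the rearrangement invariance of $X$ yields $\norm{f^*\chi_{(b,\infty)}}_{CX} \geqslant 1 - \varepsilon$; letting $\varepsilon \to 0^+$ gives $\norm{f^*\chi_{(b,\infty)}}_{CX} = 1$. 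Equivalently, one computes $\bigl(C(f^*\chi_{(b,\infty)})\bigr)^* = \frac{1}{\varphi_X(\infty)}\chi_{(0,\infty)}$ directly from the distribution function and reads off the norm. With both conditions now verified, Theorem~\ref{CX ma izometryczne l_infty} applies and $CX$ contains a lattice isometric copy of $\ell_\infty$. The only mildly delicate point is this last norm equality, where the infinite-measure tail behaviour of the Ces\`aro average must be handled carefully; everything else is bookkeeping.
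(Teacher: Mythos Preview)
Your proof is correct and follows essentially the same approach as the paper: the same normalized constant witness $f = \chi_{(0,\infty)}/\varphi_X(\infty)$, the same choice $a=0$, $b>0$, and the same reduction to Theorem~\ref{CX ma izometryczne l_infty}. The only cosmetic differences are that you invoke Corollary~\ref{corollary: de Jonge} for $\distE{f,X_a}=1$ where the paper uses Corollary~\ref{corollary: formula na odleglosc} directly, and you supply an $\varepsilon$-argument for $\norm{f^*\chi_{(b,\infty)}}_{CX}=1$ where the paper simply observes $(C(\chi_{(b,\infty)}))^* = \chi_{(0,\infty)}$.
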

	\begin{proof}
		To begin with, take $f = \chi_{(0,\infty)}/\varphi_X(\infty)$. The assumption $L_\infty \hookrightarrow X$ implies that $\norm{f}_X = 1$.
		Moreover,
		\begin{equation*}
			\distE{f,X_a} = \lim_{n\rightarrow\infty}\norm{f^*\chi_{(n,\infty)}}_X = \lim_{n\rightarrow\infty}\norm{(f^*\chi_{(n,\infty)})^*}_X = \norm{f}_X = 1,
		\end{equation*}
		where the first equality follows from Corollary~\ref{corollary: formula na odleglosc} and the fact due to the condition $({\mathscr D}_\infty)$
		that every function supported on the set of finite measure is order continuous. Consequently, we have the equalities $\norm{f}_X = \distE{f,X_a} = 1$.
		Again, due to the condition $({\mathscr D}_\infty)$, the ideal $X_a$ is non-trivial, so we can use Theorem \ref{Twierdzenie Hudzika}
		to conclude that the space $X$ contains a lattice isometric copy of $\ell_\infty$. On the other hand, for such $f$ it is not hard to see that
		\begin{equation*}
			\norm{f^*\chi_{(b,\infty)}}_{CX} = 1 \text{ for every } b > 0.
		\end{equation*}
		In fact,
		\begin{align*}
		\norm{f^*\chi_{(b,\infty)}}_{CX} = \frac{1}{\varphi_X(\infty)} \norm{C(\chi_{(b,\infty)})}_X
										 = \frac{1}{\varphi_X(\infty)} \norm{(C(\chi_{(b,\infty)}))^*}_X
										 = \frac{1}{\varphi_X(\infty)} \norm{\chi_{(0,\infty)}}_X = 1.
		\end{align*}
		But this means that the condition in Theorem \ref{CX ma izometryczne l_infty} is satisfied with $a = 0$ and $b > 0$.
		We therefore conclude that the space $CX$ contains a lattice isometric copy of $\ell_\infty$ and the proof is done.
	\end{proof}

	Some spaces, such as Orlicz spaces (and certain variants of Orlicz spaces like Orlicz--Lorentz spaces or Musielak--Orlicz spaces;
	cf.~\cite[Remarks on p.~526]{Hu98}), have this (from our perspective) pleasant property that if they contain a lattice isomorphic
	copy of $\ell_\infty$, then they actually contain a lattice isometric copy of $\ell_\infty$. This simple observation along with
	the results from \cite{KT17} leads us to the following
	
	\begin{theorem} \label{CX ma kopie to X ma}
		{\it Let $X$ be an order continuous rearrangement invariant space with the Fatou property and let $F$ be an Orlicz
			function. Suppose that the Ces\`aro operator $C$ is bounded on the Calder{\'o}n--Lozanovski{\u \i} space $X_F$.
			If the Ces\`aro space $C(X_F)$ contains a lattice isometric copy of $\ell_\infty$, then $X_F$ contains it also.}
	\end{theorem}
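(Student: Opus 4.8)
The plan is to run the implication \enquote{not order continuous} $\Rightarrow$ \enquote{isometric copy of $\ell_\infty$} backwards through the Ces\`aro construction, and then to cash in the fact that for Calder\'on--Lozanovski{\u \i} spaces an isomorphic copy of $\ell_\infty$ is automatically accompanied by an isometric one. First I would recall Lozanovski{\u \i}'s dichotomy \cite{Lo69}: a Banach lattice carries a lattice isomorphic copy of $\ell_\infty$ precisely when it fails to be order continuous. Since by hypothesis $C(X_F)$ contains a lattice isometric---hence, in particular, isomorphic---copy of $\ell_\infty$, we conclude at once that $C(X_F)$ is \emph{not} order continuous.

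The crucial middle step is to push this failure of order continuity down to the base space $X_F$. For this I would invoke the identity $\left(C(X_F)\right)_a = C\left((X_F)_a\right)$ furnished by Theorem~16 in \cite{KT17} (the same tool used in the proof of Theorem~\ref{CX ma izometryczne l_infty}). Its hypotheses are met here: as $X$ is rearrangement invariant with the Fatou property, so is $X_F$, and the Ces\`aro operator $C$ is assumed bounded on $X_F$. Arguing by contraposition, suppose $X_F$ were order continuous, i.e. $(X_F)_a = X_F$. Then $\left(C(X_F)\right)_a = C\left((X_F)_a\right) = C(X_F)$, which says exactly that $C(X_F)$ is order continuous---contradicting the previous paragraph. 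Therefore $X_F$ is not order continuous.

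It remains to upgrade this to an isometric statement for $X_F$ itself, which is where the \enquote{pleasant property} mentioned just before the theorem enters. Applying Lozanovski{\u \i}'s dichotomy once more, the lack of order continuity of $X_F$ already yields a lattice isomorphic copy of $\ell_\infty$; equivalently, the generating Orlicz function $F$ violates the relevant $\Delta_2^X$-type condition. For Calder\'on--Lozanovski{\u \i} spaces the failure of that condition \emph{by itself} produces a lattice isometric copy of $\ell_\infty$---this is Theorem~1 and Theorem~2 of \cite{HKM96} in the function case and the results of \cite{FH99} (through their Lemma~2.4) in the sequence case, exactly the ingredients already exploited in Corollaries~\ref{corollary: kopia w funkcyjnym Calderonie} and~\ref{corollary: kopia w ciagowym calderonie}. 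This delivers the desired lattice isometric copy of $\ell_\infty$ inside $X_F$ and closes the argument.

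As for the location of the real difficulty: the two appeals to Lozanovski{\u \i}'s theorem and the Orlicz-type isomorphic-implies-isometric phenomenon are essentially citations, so the only step that genuinely needs care is the application of Theorem~16 of \cite{KT17} with $X_F$ playing the role of the base space. One must verify that $X_F$ truly qualifies as a rearrangement invariant space with the Fatou property on which $C$ is bounded and whose order continuous ideal is non-trivial (note that $\supp((X_F)_a) = \supp(X_F)$), so that the identity $\left(C(X_F)\right)_a = C\left((X_F)_a\right)$ is legitimately available; once this is secured, the rest is formal.
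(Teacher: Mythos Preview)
Your argument is correct and follows essentially the same route as the paper's proof. Both proceed by contraposition through Lozanovski{\u \i}'s dichotomy and the \enquote{isomorphic $\Rightarrow$ isometric} phenomenon for Calder\'on--Lozanovski{\u \i} spaces; the only cosmetic difference is that the paper directly cites \cite[Theorem~3]{KT17} (the order-continuity transfer $X_F\in(OC)\Rightarrow C(X_F)\in(OC)$), whereas you derive this transfer from the finer identity $(C(X_F))_a=C((X_F)_a)$ of \cite[Theorem~16]{KT17}, which is exactly how the paper itself obtains the analogous corollary in its Appendix.
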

	\begin{proof}
		Assume that $X_F$ does not contain a lattice isometric copy of $\ell_\infty$. Then the space $X_F$ does not contain even
		an order isomorphic copy of $\ell_\infty$. However, in view of the Lozanovski{\u \i} theorem \cite{Lo69},
		this means that $X_F$ is order continuous. At this stage, applying \cite[Theorem~3]{KT17}, we obtain that the space $CX$
		is order continuous as well. Consequently, the space $CX$ cannot contain a lattice isometric copy of $\ell_\infty$ and
		we are done.
	\end{proof}

	As we already mentioned, if $X$ is a Banach function space with a trivial ideal $X_a$, then the characterization given in
	Theorem~\ref{Twierdzenie Hudzika} is no longer valid in general. What seems to be quite interesting, the case of Ces\`aro
	spaces $CX$ (at least, when $X$ is a rearrangement invariant space) is different, because a suppport of the ideal $(CX)_a$
	is equal to a support of the space $X$ even if the ideal $X_a$ is trivial (see \cite[Lemma 8]{KT17}). Therefore, we can
	still use Theorem~\ref{Twierdzenie Hudzika} and obtain the next
	
	\begin{theorem} \label{CX ma kopie linfty Xa = 0}
			{\it Let $X$ be a rearrangement invariant space with the Fatou property such that the ideal $X_a$ is trivial.
			Suppose that there exists a function $f > 0$ with
			\begin{itemize}
				\item[(i)]  $C(f)(0^+) = 1$,
				\item[(ii)] $\norm{f}_{CX} = \norm{\emph{id} \colon X \rightarrow L_\infty}$.
			\end{itemize}
			Then the Ces\`aro space $CX$ contains a lattice isometric copy of $\ell_\infty$.}
	\end{theorem}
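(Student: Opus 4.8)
The plan is to deduce the statement from Hudzik's Theorem~\ref{Twierdzenie Hudzika} applied to the space $CX$ itself. Since $X$ has the Fatou property, $CX$ inherits the structural hypotheses of that theorem (super $\sigma$-Dedekind completeness together with the semi-Fatou property), so the only two genuine points are the support condition $(((CX)_a)^d)^d = CX$ and the existence of an element realizing its distance from $(CX)_a$. The support condition is exactly where the assumption $X_a = \{0\}$ threatens to be fatal, and it is rescued by \cite[Lemma~8]{KT17}: even when $X_a$ is trivial one still has $\supp((CX)_a) = \supp(X) = \supp(CX)$. Granting this, it suffices to produce $h \in CX$ with $\norm{h}_{CX} = \distC{h,(CX)_a} = 1$, and the natural candidate is the normalisation $h = f/\norm{f}_{CX}$ of the given function. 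Thus everything reduces to proving that $\distC{f,(CX)_a} = \norm{f}_{CX}$.

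Here one cannot simply imitate the proof of Theorem~\ref{CX ma izometryczne l_infty}, since that argument rested on the identity $(CX)_a = C(X_a)$ from \cite[Theorem~16]{KT17}, which degenerates to $\{0\}$ once $X_a = \{0\}$, whereas $(CX)_a$ is genuinely non-trivial. I would instead first isolate the following description of the order continuous part near the origin: if $g \in (CX)_a$ then $C(\abs{g})(0^+) = 0$. To see this, note that $X_a = \{0\}$ forces $\varphi_X(0^+) > 0$, for otherwise every characteristic function of a set of finite measure would be order continuous. Testing the order continuity of $g$ against the admissible sequence $g_n = \abs{g}\chi_{(0,1/n)} \downarrow 0$ and using that $C(g_n) = C(\abs{g})$ on $(0,1/n)$, one obtains $\norm{C(g_n)}_X \geq \tfrac12\, C(\abs{g})(0^+)\,\varphi_X(0^+)$ for all large $n$; since the left-hand side tends to $0$, necessarily $C(\abs{g})(0^+) = 0$. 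In functional language this says exactly that the singular functional $S(h) := C(h)(0^+)$ (regularised by a Banach limit where the pointwise limit fails to exist) annihilates $(CX)_a$.

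With this in hand I would establish the lower estimate for the distance. For any $g \in (CX)_a$ the sublinearity of $C$ gives $C(\abs{f-g}) \geq C(f) - C(\abs{g})$ pointwise, so by condition~(i) and the fact just proved $\liminf_{x\to 0^+} C(\abs{f-g})(x) \geq C(f)(0^+) - C(\abs{g})(0^+) = 1$. Combining this with the embedding $X \hookrightarrow L_\infty$, which by definition satisfies $\norm{u}_{L_\infty} \leq \norm{\mathrm{id}\colon X \to L_\infty}\,\norm{u}_X$ for all $u \in X$, applied to $u = C(\abs{f-g})$, yields
\begin{equation*}
\norm{f-g}_{CX} = \norm{C(\abs{f-g})}_X \geq \frac{\norm{C(\abs{f-g})}_{L_\infty}}{\norm{\mathrm{id}\colon X \to L_\infty}} \geq \frac{1}{\norm{\mathrm{id}\colon X \to L_\infty}},
\end{equation*}
and hence $\distC{f,(CX)_a} \geq 1/\norm{\mathrm{id}\colon X \to L_\infty}$; the reverse inequality $\distC{f,(CX)_a} \leq \norm{f}_{CX}$ is trivial because $0 \in (CX)_a$. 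Condition~(ii), which ties $\norm{f}_{CX}$ to the embedding constant $\norm{\mathrm{id}\colon X \to L_\infty}$, is then what is meant to promote this estimate to the exact equality $\distC{f,(CX)_a} = \norm{f}_{CX}$; equivalently, it forces the singular functional $S$ above (with $S(f) = C(f)(0^+) = 1$ and $\norm{S}_{(CX)^*} \leq \norm{\mathrm{id}\colon X \to L_\infty}$) to attain its norm at $f$, after which Proposition~\ref{Funkcjonal singularny a kopia} applies directly.

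The step I expect to be the main obstacle is precisely this reverse direction, namely upgrading the clean lower bound coming from the behaviour at $0$ into the full equality $\distC{f,(CX)_a} = \norm{f}_{CX}$. This is delicate because, away from the origin, $(CX)_a$ is large enough to absorb the corresponding part of $C(f)$ and could in principle strictly lower the distance; condition~(ii) is exactly the calibration that prevents this and matches $\norm{f}_{CX}$ to the value dictated by the singular behaviour of $C(f)$ at $0$. Once the equality is secured, the normalisation $h = f/\norm{f}_{CX}$ satisfies $\norm{h}_{CX} = \distC{h,(CX)_a} = 1$, and Theorem~\ref{Twierdzenie Hudzika}, whose support hypothesis has already been verified via \cite[Lemma~8]{KT17}, yields the desired lattice isometric copy of $\ell_\infty$ in $CX$.
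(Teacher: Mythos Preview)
Your approach is the paper's. Both verify the support hypothesis of Theorem~\ref{Twierdzenie Hudzika} for $CX$ via \cite[Lemma~8]{KT17}, and both bound $\distC{f,(CX)_a}$ from below by combining the embedding $X\hookrightarrow L_\infty$ with the fact that $C(\abs{g})(0^+)=0$ whenever $g\in(CX)_a$. The paper quotes this last fact from \cite[Lemma~14]{KT17} rather than reproving it, and it first invokes Lemma~\ref{lemma 1 distance} to restrict to competitors $0\leqslant g\leqslant f$, so that $C(f-g)=C(f)-C(g)$ exactly; your sublinearity argument reaches the same inequality. Your direct derivation of $C(\abs{g})(0^+)=0$ from $\varphi_X(0^+)>0$ is correct and is essentially how \cite[Lemma~14]{KT17} is proved.

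Where you hesitate---promoting the lower bound $\distC{f,(CX)_a}\geqslant 1/\norm{\mathrm{id}\colon X\to L_\infty}$ to the required equality---the paper does not introduce any further analytic step and does not pass through Proposition~\ref{Funkcjonal singularny a kopia}. It simply normalizes at the outset: since multiplying $\norm{\cdot}_X$ by a positive constant leaves the conclusion unchanged, one declares $\norm{\mathrm{id}\colon X\to L_\infty}=1$ and then reads hypothesis~(ii) as $\norm{f}_{CX}=1$. The chain
\[
1\;\leqslant\;\distC{f,(CX)_a}\;\leqslant\;\norm{f}_{CX}\;=\;1
\]
then closes immediately, with no renormalisation of $f$ afterwards. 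So the only concrete thing missing from your write-up is to perform this normalization of the embedding constant before anything else; once that is in place your argument and the paper's coincide.
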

	\begin{proof} At the beginning observe that $X \hookrightarrow L_\infty$, because $X_a = \{ 0 \}$ (cf. \cite[Theorem~B]{KT17}).
		Moreover, since we can always replace the norm $\norm{\cdot}_X$ by a constant multiple of itself, it follows that without
		loss of generality we can assume that $\norm{\text{id} \colon X \rightarrow L_\infty} = 1$.
		
		Let $f$ be a function whose existence is guaranteed by our assumptions, that is to say, $C(f)(0^+)=1$ and $\norm{f}_{CX}
		= \norm{\text{id} \colon X \rightarrow L_\infty} = 1$. We will show that
		\begin{equation} \tag{$\ast$}
			\distC{f, (CX)_a} = 1.
		\end{equation}
		Indeed, we have the following inequalities
		\begin{align*}
		\distC{f, (CX)_a} & = \inf \{ \norm{f - g}_{CX} \colon g \in (CX)_a
		\text{ and } 0 < g \leqslant f \} \\
		& = \inf \{ \norm{C(f) - C(g)}_{X} \colon g \in (CX)_a \text{ and } 0
		< g \leqslant f \} \\
		& \quad\quad\quad (\text{because } f - g \geqslant 0) \\
		& \geqslant \inf \{ \norm{C(f) - C(g)}_{L_\infty} \colon g \in (CX)_a
		\text{ and } 0 < g \leqslant f \} \\
		& \quad\quad\quad (\text{in view of the embedding } X
		\xhookrightarrow{1} L_\infty) \\
		& \geqslant \inf \{ \abs{C(f)(0^+) - C(g)(0^+)} \colon g \in (CX)_a
		\text{ and } 0 < g \leqslant f \} = 1,
		\end{align*}
		where the first equality is due to Lemma~\ref{lemma 1 distance} and the last equality is the consequence of the fact that
		$C(g)(0^+) = 0$, whenever $g \in (CX)_a$ and the ideal $X_a$ is trivial (see \cite[Lemma~14]{KT17}). In summary, we proved
		the claim $(\ast)$, because we have shown that
		\begin{equation*}
			1 \leqslant \distC{f, (CX)_a} \leqslant \norm{f}_{CX} \leqslant 1.
		\end{equation*}
		
		Now, to complete the proof, it is enough to note that $\supp((CX)_a) = \supp(X)$ (see \cite[Lemma~8]{KT17}) and use once
		again Hudzik's result from \cite{Hu98} (cf. Theorem~\ref{Twierdzenie Hudzika}).
	\end{proof}
	
	\begin{remark}
		Let us now make a few remarks regarding the above Theorem~\ref{CX ma kopie linfty Xa = 0}:
		
		(a) The assumption about the symmetry of the space $X$ cannot in general be omitted (because, for example, the space $L_\infty(w)$ with
		the weight $w$ defined as $w(x) = x$ (for $x > 0$) contains a lattice isometric copy of $\ell_\infty$, but $C(L_\infty(w)) \equiv L_1$
		and the space $L_1$ is even separable).
		
		(b) It may happen (consider, for example, the space $X = L_1 \cap L_\infty$ on $(0,\infty)$) that the space $X$ contains a lattice
		isometric copy of $\ell_\infty$ but the Ces\`aro space $CX$ is trivial.
		
		(c) If $X$ is a rearrangement invariant space on $(0,1)$ with the Fatou property such that the ideal $X_a$ is trivial, then the space $X$
		is just $L_\infty$ up to equivalence of norms (for the proof see, e.g. \cite[Theorem~B]{KT17}).
	\end{remark}
	
	\begin{corollary} \label{corollary: X cap L_infty}
		{\it Let $X$ be a rearrangement invariant space on $(0,\infty)$ with the Fatou property such that the Ces\`aro space $CX$ is non-trivial.
			Suppose that either the ideal $X_a$ is non-trivial and the Ces\`aro operator $C$ is bounded on $X$ or $X_a = X$. Then the Ces\`aro
			space $C(X \cap L_\infty)$ contains a lattice isometric copy of $\ell_\infty$.}
	\end{corollary}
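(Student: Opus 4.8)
The plan is to transfer the whole problem to the intersection space $Y := X \cap L_\infty$, normed by $\norm{f}_Y = \max\{\norm{f}_X, \norm{f}_\infty\}$, and then to invoke Theorem~\ref{CX ma kopie linfty Xa = 0}. First I would collect the structural properties of $Y$: it is rearrangement invariant and inherits the Fatou property from $X$ and $L_\infty$ (an increasing sequence bounded in $Y$ is bounded in each factor, its pointwise supremum lies in both, and the norms increase to the limit). The decisive observation is that $Y_a = \{0\}$. Indeed, $\norm{\cdot}_\infty \leqslant \norm{\cdot}_Y$, so for any $0 \neq f \in Y$ we may pick a set $E$ of positive measure with $\abs{f} \geqslant \varepsilon$ on $E$ and then nested $E_n \subset E$ with $m(E_n) \to 0$; the functions $\varepsilon\chi_{E_n}$ satisfy $0 \leqslant \varepsilon\chi_{E_n} \leqslant \abs{f}$ and $\varepsilon\chi_{E_n} \downarrow 0$, yet $\norm{\varepsilon\chi_{E_n}}_Y \geqslant \varepsilon \not\to 0$, so $f \notin Y_a$. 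Since $C(X \cap L_\infty) = CY$, it remains to produce a positive function $f$ meeting conditions (i) and (ii) of Theorem~\ref{CX ma kopie linfty Xa = 0}.

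Next I would fix the normalization constant $\norm{\mathrm{id} \colon Y \to L_\infty}$. The point is that $\varphi_X(0^+) = 0$ under either hypothesis. When $X = X_a$ this follows at once from order continuity applied to $\chi_{(0,1/n)} \downarrow 0$. When $X_a \neq \{0\}$ I would argue contrapositively: if $\varphi_X(0^+) = c > 0$, then for every $0 \neq g \in X$, choosing $E$ with $\abs{g} \geqslant \varepsilon$ and $E_n \subset E$, $E_n \downarrow$, $m(E_n) \to 0$, gives $\norm{g\chi_{E_n}}_X \geqslant \varepsilon\,\varphi_X(m(E_n)) \geqslant \varepsilon c > 0$, forcing $X_a = \{0\}$, a contradiction. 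Testing $\norm{\mathrm{id}\colon Y \to L_\infty}$ on $\chi_{(0,t)}$ with $t$ so small that $\varphi_X(t) \leqslant 1$ gives $\norm{\chi_{(0,t)}}_Y = 1 = \norm{\chi_{(0,t)}}_\infty$, and since $\norm{\cdot}_\infty \leqslant \norm{\cdot}_Y$ always holds, I conclude $\norm{\mathrm{id}\colon Y \to L_\infty} = 1$.

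Finally I would exhibit the test function, and this is where I expect the only genuine work. The natural choice is $f = \chi_{(0,\delta)}$, for which $C(f)(x) = \min\{1, \delta/x\}$; hence $C(f)(0^+) = 1$ (condition (i)) and $\norm{C(f)}_\infty = 1$. Condition (ii) then amounts to $\norm{f}_{CY} = \max\{\norm{C(f)}_X, 1\} = 1$, i.e. to $\norm{C(f)}_X \leqslant 1$ for $\delta$ small. If $C$ is bounded on $X$ this is immediate, because $\norm{C\chi_{(0,\delta)}}_X \leqslant \norm{C}_{X \to X}\,\varphi_X(\delta) \to 0$. In the order continuous case $C$ need not be bounded, and the main obstacle is to see that $C\chi_{(0,\delta)}$ even lies in $X$ with small norm. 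For this I would use that $CX$ is non-trivial to fix $0 \neq g \in (CX)_+$ with $Cg \in X$; since $Cg(x) \geqslant \tfrac{a}{x}$ on a half-line $(x_0,\infty)$ with $a = \int_0^{x_0} g > 0$, the ideal property yields $\tfrac1x\chi_{(x_0,\infty)} \in X$, whence $C\chi_{(0,\delta)} \leqslant \chi_{(0,x_0)} + \delta\,\tfrac1x\chi_{(x_0,\infty)} \in X$ for $\delta < x_0$. As $C\chi_{(0,1/n)} \downarrow 0$ and is dominated in $X$, order continuity gives $\norm{C\chi_{(0,1/n)}}_X \to 0$. In either case, for $\delta$ small enough the function $f = \chi_{(0,\delta)}$ satisfies both hypotheses of Theorem~\ref{CX ma kopie linfty Xa = 0}, which then furnishes a lattice isometric copy of $\ell_\infty$ in $CY = C(X \cap L_\infty)$.
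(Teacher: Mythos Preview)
Your proof is correct and follows essentially the same route as the paper's: verify that $Y = X\cap L_\infty$ has trivial order-continuous part and $\norm{\mathrm{id}\colon Y\to L_\infty}=1$, then feed the test function $f=\chi_{(0,\delta)}$ into Theorem~\ref{CX ma kopie linfty Xa = 0} once $\delta$ is small enough that $\norm{C\chi_{(0,\delta)}}_X\leqslant 1$. The paper treats only the case where $C$ is bounded on $X$ and dismisses the order-continuous case as ``much easier''; your handling of that case---using non-triviality of $CX$ to force $x^{-1}\chi_{(x_0,\infty)}\in X$ and then invoking order continuity on $C\chi_{(0,1/n)}\downarrow 0$---is a clean way to fill in what the authors left implicit.
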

	\begin{proof}
		This fact follows (more or less) directly from Theorem~\ref{CX ma kopie linfty Xa = 0}. We just need to check a few details. Moreover,
		we will give the proof only in the case when the ideal $X_a$ is non-trivial and the operator $C$ is bounded on $X$, because the remaining
		case is much easier.
		
		First of all, let us note that
		\begin{equation*}
			X \cap L_\infty \overset{1}{\hookrightarrow} L_\infty \text{ (so, also } (X \cap L_\infty)_a = \{0\}) \text{ and } C(X \cap L_\infty) \neq \{0\}.
		\end{equation*}
		Put $f_\varepsilon = \chi_{(0,\varepsilon)}$, where $\varepsilon > 0$. We claim that
		\begin{equation} \tag{$\star$}
			\text{there exists } \varepsilon_0 > 0 \text{ such that } \norm{f_{\varepsilon_0}}_{CX} \leqslant 1.
		\end{equation}
		To show $(\star)$, it is enough to observe that $\varphi_X(0^+) = 0$ (this is actually equivalent to the fact that the ideal $X_a$ is non-trivial,
		see \cite[Theorem~5.5~(a), pp.~67--68]{BS88}; cf. \cite[Theorem~B]{KT17}) and the Ces\`aro operator $C$ is bounded on $X$, hence
		\begin{equation*}
			\norm{f_{\varepsilon}}_{CX} = \norm{C(\chi_{(0,\varepsilon)})}_X \leqslant \norm{C}_{X \rightarrow X} \varphi_X(\varepsilon) \rightarrow 0
																														\text{ as } \varepsilon \rightarrow 0.
		\end{equation*}
		Consequently, we have the following equalities
		\begin{equation*}
			\norm{f_{\varepsilon_0}}_{C(X \cap L_\infty)} = \max\{\norm{f_{\varepsilon_0}}_{CX}, \norm{f_{\varepsilon_0}}_{Ces_\infty}\}
														  = \max\{\norm{f_{\varepsilon_0}}_{CX}, 1\} = 1.
		\end{equation*}
		This means that we have defined the function $f_{\varepsilon_0}$ with
		\begin{equation*}
			C(f_{\varepsilon_0})(0^+) = 1 \text{ and } \norm{f_{\varepsilon_0}}_{C(X \cap L_\infty)} = 1 = \norm{\text{id} \colon X \cap L_\infty \rightarrow L_\infty}.
		\end{equation*}
		Now, because the ideal $(X \cap L_\infty)_a$ is trivial, we can use Theorem~\ref{CX ma kopie linfty Xa = 0} and end the proof.
	\end{proof}

	\begin{remark}
		(a) Let $X = L_p + L_\infty$ on $(0,\infty)$, where $1 < p < \infty$. It is clear that in this case the ideal $X_a$ is non-trivial\footnote{Precisely,
			it can be shown that $(L_p + L_\infty)_a = \{f \in L_p + L_\infty \colon f^*(\infty) = 0\}$ (remembering that
			$\norm{f}_{L_p + L_\infty} \asymp (\int_0^1 f^*(t)^p dt)^{1/p}$ (see~\cite[p.~109]{BL76}) one can, for example, modify the proof of an analogous fact
			for the case when $p = 1$ included in \cite{KT17}; cf.~\cite[pp.~115--116]{KPS82}).}
			and the Ces\`aro operator is bounded on $X$. However, it is also clear\footnote{Just note that $L_\infty \overset{1}{\hookrightarrow} L_p + L_\infty$,
			so $\norm{f}_{L_\infty} \leqslant \max\{\norm{f}_{L_p + L_\infty}, \norm{f}_{L_\infty}\} \leqslant \norm{f}_{L_\infty}$.} that
			$X \cap L_\infty \equiv L_\infty$. Therefore, using Theorem~\ref{CX ma kopie linfty Xa = 0} (via Corollary~\ref{corollary: X cap L_infty})
			we re-prove Proposition~4.9 from \cite{KKM21} which states that the space $Ces_\infty$ contains, as one would expect, a lattice isometric
			copy of $\ell_\infty$.
			
		(b) Let $X = L_F$ be an Orlicz space generated by an Orlicz function $F$ such that $b_F \coloneqq \sup\{x > 0 \colon F(x) < \infty\} = 1$,
			$F(1) \leqslant 1$ and the left derivative of $F$ at $x = 1$ is finite. Then there exists an Orlicz function, say $G$, with $b_G = \infty$ such
			that $L_F \equiv L_G \cap L_\infty$. The proof of this fact is analogous to the proof of \cite[Theorem~12.1~(a), p.~99]{Ma89}. However, to
			get an isometry instead of isomorphism, we need to notice two things: (1) due to the assumption regarding the derivative of $F$, we can
			always extend $F$ (but not necessarily in a uniqe way!) to an Orlicz function, say $\widetilde{F}$, with $b_{\widetilde{F}} = \infty$;
			(2) the function $(\widetilde{F} \vee F_\infty)(x) \coloneqq \max\{\widetilde{F}(x), F_\infty(x)\}$, where
			\begin{align*}
			F_\infty(x) =
			\begin{cases}
			0 & \text{for } 0 \leqslant x \leqslant 1\\
			\infty & \text{for } x > 1
			\end{cases},
			\end{align*}
			is equal to $F$.  Now, the embedding $L_F \overset{1}{\hookrightarrow} L_{\widetilde{F}} \cap L_\infty$ follows directly from \cite[Theorem~12.1]{Ma89}.
			Therefore, it remains to show that also
			\begin{equation} \tag{$\sharp$}
				L_{\widetilde{F}} \cap L_\infty \overset{1}{\hookrightarrow} L_F.
			\end{equation}
			Take $f \in L_{\widetilde{F}} \cap L_\infty$ with $\max\{\norm{f}_{L_{\widetilde{F}}}, \norm{f}_{L_\infty}\} = 1$. Let us consider two cases:
			
			$1^{\circ}$. $\norm{f}_{L_\infty} = 1$. Then $\abs{f(x)} \leqslant 1 = b_F$, so $\int_0^\infty F(\abs{f(x)})dx = \int_0^\infty \widetilde{F}(\abs{f(x)})dx \leqslant 1$.
			Moreover, since $\norm{f}_{L_\infty} = 1 = b_F$, it follow that $\int_0^\infty F(\abs{f(x)}/\lambda)dx = \infty$ for every $\lambda < 1$.
			Therefore, $\norm{f}_{L_F} = 1$ as well.
			
			$2^{\circ}$. $\norm{f}_{L_{\widetilde{F}}} = 1$ and $\norm{f}_{L_\infty} < 1$. Then
			$\int_0^\infty F(\abs{f(x)})dx = \int_0^\infty \widetilde{F}(\abs{f(x)})dx \leqslant 1$, so $\norm{f}_{L_F} \leqslant 1$. In order to obtain a contradiction,
			suppose that $\norm{f}_{L_F} < 1$. Then $\int_0^\infty F(\abs{f(x)}/\lambda_0)dx \leqslant 1$ for some $\lambda_0 < 1$. Hence, $\abs{f}/\lambda_0 \leqslant 1 = b_F$
			and, consequently, also $\int_0^\infty \widetilde{F}(\abs{f(x)}/\lambda_0)dx \leqslant 1$. However, this implies that $\norm{f}_{L_{\widetilde{F}}} < 1$,
			which is impossible. The proof of $(\sharp)$ has been completed.
			
			In particular, if $1 \leqslant p < \infty$ and the Orlicz function $F_{p,\infty}$ is given in the following way
			\begin{align*}
			F_{p,\infty}(x) =
			\begin{cases}
				x^p & \text{for } 0 \leqslant x \leqslant 1\\
			\infty & \text{for } x > 1
			\end{cases},
			\end{align*}
			then $L_{F_{p,\infty}} \equiv L_p \cap L_\infty$.
			
			The above spaces provided some natural examples of spaces which satisfy the assumptions of Corollary~\ref{corollary: X cap L_infty} (of course,
			in order to use Corollary~\ref{corollary: X cap L_infty}, the functions $F$ and $G$ cannot be completely arbitrary; however, translating the
			assumptions of Corollary~\ref{corollary: X cap L_infty} into the language of the Orlicz spaces - the $\Delta_2$-condition and the Matuszewska--Orlicz
			indices (cf.~\cite[Proposition~2.b.5, p.~139]{LT79}) will surely appear - does not present much difficulty).
			
		(c) Let $\varphi$ be an increasing concave function with $\varphi(0^+) = 1$ (this normalization is basically inessential). Since the norm in the Lorentz
			space $\Lambda_\varphi$ is given by the formula
			\begin{equation*}
				\norm{f}_{\Lambda_\varphi} = \norm{f}_{L_\infty} + \int_0^\infty f^*(t)\varphi'(t)dt
										   = \norm{\left(\norm{f}_{\Lambda_{\int_0^t \varphi'(s)ds}}, \norm{f}_{L_\infty}\right)}_{\ell_1^2},
			\end{equation*}
			it follows that
			\begin{equation*}
				\Lambda_\varphi \equiv \Lambda_{\int_0^t \varphi'(s)ds} \cap L_\infty \overset{1}{\hookrightarrow} L_\infty \text{ and }
										\left(\Lambda_{\int_0^t \varphi'(s)ds}\right)_a \neq \{0\},
			\end{equation*}
			where, of course, the intersection space is considered with the equivalent two-dimensional $\ell_1$-norm given above (after all, all norms
			on $\mathbb{R}^2$ are equivalent). However, in this case the proof of Corollary~\ref{corollary: X cap L_infty} does not work. On the other hand,
			if we consider the general family of equivalent norms on the space $X \cap L_\infty$ of the form
			\begin{equation*}
				\norm{f}_{X \oplus_F L_\infty} = \norm{(\norm{f}_X, \norm{f}_{L_\infty})}_{F} \text{ for } f \in X \cap L_\infty,
			\end{equation*}
			where $\norm{\cdot}_F$ is a norm on $\mathbb{R}^2$ with the ideal property, then for our proof to work, it is enough to require that
			\begin{equation*}
				\norm{(x_0,1)}_{F} = \norm{(0,1)}_F = 1 \text{ for some } x_0 > 0,
			\end{equation*}
			which means that the unit sphere in $(\mathbb{R}^2, \norm{\cdot}_F)$ contains an order interval beginning at point $(0,1)$. For example,
			the \enquote{hexagonal} norm
			\begin{equation*}
			\norm{(x,y)}_{hex} = \max\left\{\abs{x + \frac{\sqrt{3}}{3}y}, \abs{x - \frac{\sqrt{3}}{3}y},\frac{2\sqrt{3}}{3}\abs{y}\right\} \text{ for } x,y \in \mathbb{R},
			\end{equation*}
			seems to be a good candidate for such a norm. But we will stop here.
		
		(d) The Ces\`aro operator $C$ is not bounded on the Zygmund space $L\log L$ but the Ces\`aro space $C(L\log L)$ is non-trivial (cf.~\cite{KKM21})
			and the space $L\log L$ is order continuous. On the other hand, the Ces\`aro operator is bounded on $L_\infty$ but the ideal $(L_\infty)_a$ is trivial.
	\end{remark}
	
	\section{Appendix. Local approach to order continuity in abstract Ces\`aro sequence spaces}
	
	We are going to present here a result characterizing the ideal of order continuous elements in the Ces\`aro sequence spaces. This result,
	as well as it's proof, is analogous to Theorem~7 from \cite{KT17} (to be fair, due to the fact that $X_a$ is always non-trivial subspace
	of $X$, whenever $X$ is a Banach sequence space, it is much simpler than its function counterpart). However, in order to relieve the reader
	from the tedious obligation to check all the details of this proof on his own, we rather prefer to provide a brief sketch of the argument.
	
	\begin{proposition}
		{\it Let $X$ be a rearrangement invariant sequence space such that the Ces\`aro operator $C$ is bounded on $X$. Then $(CX)_a = C(X_a)$.}
	\end{proposition}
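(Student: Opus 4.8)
The plan is to reduce everything to the standard tail characterization of order continuity in Banach sequence spaces and then to compare two sequences of tail norms. Recall that in any Banach ideal space $Z$ on $\mathbb{N}$ an element $z$ is order continuous if and only if $\lim_{n\to\infty}\norm{z\chi_{T_n}}_Z = 0$, where $T_n := \{k \in \mathbb{N} : k > n\}$: the tail estimate is immediate since $\abs{z}\chi_{T_n}\downarrow 0$, while the converse follows by splitting any $0\leqslant g_m \leqslant \abs{z}$ with $g_m\downarrow 0$ into a finitely supported head (whose norm tends to $0$ because every $e_k$ is order continuous) and a tail dominated by $z\chi_{T_N}$. Applying this to $Z = CX$ and to $Z = X$, and writing
\begin{equation*}
A_n := \norm{\abs{f}\chi_{T_n}}_{CX} = \norm{C(\abs{f}\chi_{T_n})}_X, \qquad B_n := \norm{C(\abs{f})\chi_{T_n}}_X,
\end{equation*}
the identity $(CX)_a = C(X_a)$ becomes the equivalence $A_n \to 0 \Longleftrightarrow B_n \to 0$, since $f \in (CX)_a \Leftrightarrow A_n \to 0$ and $f \in C(X_a)$ (that is, $C(\abs{f}) \in X_a$) $\Leftrightarrow B_n \to 0$.

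For the inclusion $C(X_a) \subseteq (CX)_a$ I would only use the pointwise domination $C(\abs{f}\chi_{T_n}) \leqslant C(\abs{f})\chi_{T_n}$. This is clear: both sides vanish on $\{1,\dots,n\}$, while for $k>n$ one has $C(\abs{f}\chi_{T_n})(k)=\frac1k\sum_{j=n+1}^{k}\abs{f(j)}\leqslant \frac1k\sum_{j=1}^{k}\abs{f(j)} = C(\abs{f})(k)$. By the ideal property this gives $A_n\leqslant B_n$, so $B_n\to 0$ forces $A_n\to 0$.

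The reverse inclusion $(CX)_a\subseteq C(X_a)$ is the heart of the matter. For any $m<n$ and $k>n$ I would use the exact splitting
\begin{equation*}
C(\abs{f})(k) = \frac{1}{k}\sum_{j=1}^{m}\abs{f(j)} + \frac1k\sum_{j=m+1}^{k}\abs{f(j)} = S_m\, C(e_1)(k) + C(\abs{f}\chi_{T_m})(k), \qquad S_m := \sum_{j=1}^{m}\abs{f(j)},
\end{equation*}
where $C(e_1)=\{1/k\}_{k=1}^\infty$; since all three sequences vanish on $\{1,\dots,n\}$ this yields $C(\abs{f})\chi_{T_n} = S_m\,C(e_1)\chi_{T_n} + C(\abs{f}\chi_{T_m})\chi_{T_n}$, and therefore, using $C(\abs{f}\chi_{T_m})\chi_{T_n}\leqslant C(\abs{f}\chi_{T_m})$,
\begin{equation*}
B_n \leqslant S_m\,\norm{C(e_1)\chi_{T_n}}_X + A_m.
\end{equation*}
Given $\varepsilon>0$ I would first fix $m$ so large that $A_m<\varepsilon$ (possible since $f\in(CX)_a$), and then let $n\to\infty$, obtaining $\limsup_n B_n\leqslant \varepsilon + S_m\lim_{n}\norm{C(e_1)\chi_{T_n}}_X$; the proof is then complete once we know $\norm{C(e_1)\chi_{T_n}}_X\to 0$, i.e. that $C(e_1)=\{1/k\}\in X_a$.

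This last fact is exactly where the boundedness of $C$ is indispensable, and it is the main obstacle. I would establish it quantitatively: since $C(\chi_{\{1,\dots,n\}})(k)=\min(k,n)/k$ equals $n/k$ for $k>n$, we have $C(e_1)\chi_{T_n}=\frac1n\,C(\chi_{\{1,\dots,n\}})\chi_{T_n}\leqslant \frac1n\,C(\chi_{\{1,\dots,n\}})$, hence
\begin{equation*}
\norm{C(e_1)\chi_{T_n}}_X \leqslant \frac1n\norm{C(\chi_{\{1,\dots,n\}})}_X \leqslant \frac{\norm{C}_{X\to X}}{n}\,\varphi_X(n).
\end{equation*}
It then remains to check that $\varphi_X(n)/n\to 0$. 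The quotient $\varphi_X(n)/n$ is non-increasing, so it has a limit $c\geqslant 0$; if $c>0$ then $\varphi_X(n)\asymp n$, and the sandwich $\Lambda_{\varphi_X}\hookrightarrow X\hookrightarrow M_{\varphi_X}$, together with the elementary computation that both $\Lambda_{\varphi_X}$ and $M_{\varphi_X}$ coincide with $\ell_1$ up to equivalent norms when $\varphi_X(n)\asymp n$, would force $X=\ell_1$ up to an equivalent norm, on which $C$ is unbounded, a contradiction. Hence $c=0$, so $\norm{C(e_1)\chi_{T_n}}_X\to 0$, which closes the argument.
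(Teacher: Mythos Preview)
Your proof is correct, but the route differs from the paper's in two notable ways.

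The paper argues via the chain $C(X_a)\subset (CX)_a\subset (CX)_b\subset C(X_a)$, passing through the closure of finitely supported sequences $(CX)_b$. The key technical point --- that every finitely supported sequence lies in $C(X_a)$, i.e.\ that $\{1/k\}\in X_a$ --- is handled there by invoking Boyd index theory: boundedness of $C$ on $X$ forces $p_X>1$, hence $\ell_p\hookrightarrow X$ for some $p<\infty$, and the tail of $\{1/k\}$ goes to zero already in $\ell_p$. You instead compare the two tail norms $A_n$ and $B_n$ directly, bypassing $(CX)_b$ entirely, and for the crucial fact $\{1/k\}\in X_a$ you give the clean quantitative bound $\norm{C(e_1)\chi_{T_n}}_X\leqslant \norm{C}_{X\to X}\,\varphi_X(n)/n$, reducing everything to $\varphi_X(n)/n\to 0$; the latter you obtain by a short contradiction with $X=\ell_1$ (where $C$ is unbounded) via the embedding $X\hookrightarrow M_{\varphi_X}$.

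Your argument is more elementary and self-contained --- no Boyd indices, no external reference for the inclusion $C(X_a)\subset (CX)_a$ --- and it yields an explicit rate. The paper's approach is more structural and reuses machinery already set up in \cite{KT17}. One minor remark: in your final contradiction step you do not actually need the Lorentz endpoint $\Lambda_{\varphi_X}=\ell_1$; the Marcinkiewicz side $X\hookrightarrow M_{\varphi_X}=\ell_1$ together with the universal embedding $\ell_1\hookrightarrow X$ already forces $X=\ell_1$.
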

	\begin{proof}
		To prove the equality $(CX)_a = C(X_a)$ we will show the following inclusions
		\begin{equation*}
		C(X_a) \subset (CX)_a \subset (CX)_b \subset C(X_a).
		\end{equation*}
		The first inclusion $C(X_a) \subset (CX)_a$ holds true even for Banach sequence spaces and its proof is {\it mutatis mutandis} the same
		as Lemma 11 in \cite{KT17}, while the second inclusion $(CX)_a \subset (CX)_b$ follows directly from \cite[Theorem~3.11, p.~18]{BS88}.
		Consequently, it remains only to show that $(CX)_b \subset C(X_a)$. We will do this in two steps.
		
		$1^{\circ}$. Take $\chi_{A}$, where $A \subset \mathbb{N}$ and $\max(A) \coloneqq n_0 < \infty$. 
		Let $n>n_0$. Since $p_X > 1$\footnote{Symbol $p_X$ denote here the lower Boyd index. We will only remaind that the classical Hardy operator $C$
		is bounded in a rearrangement invariant space $X$ if and only if $p_X > 1$ (for details we refer, for example, to
		\cite[Theorem~6.6, p.~138]{KPS82}, \cite[pp.~129--131]{LT79}) or \cite[pp.~126--129]{KMP07} and references given there).},
		so $\ell_p \hookrightarrow X$ for all $1 < p < p_X$ (this fact follows from Proposition~2.b.3 in \cite{LT79},
		which is admittedly formulated for rearrangement invariant function spaces, but exactly as noted at the top of the page 132, after some
		notation changes due to the use of discrete versions of Boyd's indices, his proof remains essentially unchanged in the comparison to the
		proof of the afromentioned Proposition~2.b.3) and we have the following inequalities
		\begin{equation*}
		\norm{C(\chi_A)\chi_{\{n,n+1,\dots\}}}_{X} \leqslant \norm{\frac{1}{k}\chi_{\{n,n+1,\dots\}}(k)}_{X}
												\preccurlyeq \norm{\frac{1}{k}\chi_{\{n,n+1,\dots\}}(k)}_{\ell_p} \rightarrow 0,
		\end{equation*}
		as $n \rightarrow \infty$.
		
		$2^{\circ}$. Take $a = \{a_n\}_{n=1}^\infty \in (CX)_b$. Let $\{s_n\}_{n=1}^\infty$ be a sequence of sequences such that $s_n$
		converges to $a$ in the norm of $CX$ and $\#(\supp(s_n)) < \infty$ for all $n \in \mathbb{N}$. It follows from the previous step
		that $\{s_n\}_{n=1}^\infty \subset C(X_a)$. Now, using the reverse triangle inequality, we get
		\begin{equation*}
		\norm{C(\abs{a}) - C(\abs{s_n})}_X \leqslant \norm{C(\abs{a - s_n})}_X = \norm{a - s_n}_{CX} \rightarrow 0,
		\end{equation*}
		as $n \rightarrow \infty$. Since $X_a$ is a closed ideal of $X$, so $C(\abs{a}) \in X_a$ and the proof is completed.
	\end{proof}

	The immediate consequence of the above result is the following fact (which, in the case of the abstract Ces\`aro function spaces,
	was the main result of \cite{KT17}).

	\begin{corollary}
		{\it Let $X$ be a rearrangement invariant sequence space such that the Ces\`aro operator $C$ is bounded on $X$. Then $CX$
			is order continuous if and only if $X$ is order continuous as well.}
	\end{corollary}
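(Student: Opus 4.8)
The plan is to read off both implications directly from the identity $(CX)_a = C(X_a)$ just established, where throughout $C(X_a)$ denotes the Ces\`aro space built over the ideal $X_a$, that is, $C(X_a) = \{f \colon C(\abs{f}) \in X_a\}$. Recall that a Banach lattice $Y$ is order continuous precisely when $Y = Y_a$, so the task reduces to comparing the two sets $CX = \{f \colon C(\abs{f}) \in X\}$ and $(CX)_a = C(X_a) = \{f \colon C(\abs{f}) \in X_a\}$.

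The forward implication is immediate: if $X$ is order continuous then $X = X_a$, hence $C(X_a) = CX$ as sets, and combined with the Proposition this yields $(CX)_a = CX$, i.e. $CX$ is order continuous.

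For the converse I would argue by contraposition, showing that if $X$ is not order continuous then neither is $CX$. Since $X$ is a rearrangement invariant sequence space, failure of order continuity means $X_a \subsetneq X$, so one may choose $h \in X_+ \setminus X_a$; because membership in the order continuous part of a rearrangement invariant space depends only on the non-increasing rearrangement, we may take $h = h^*$ to be non-increasing. As $C$ is bounded on $X$ we have $X \hookrightarrow CX$, so $h \in CX$. The crucial elementary observation is the pointwise inequality $h^* \leqslant C(h^*)$ (the Ces\`aro averages of a non-increasing sequence dominate its own terms). Thus, if $C(h) = C(h^*)$ were to belong to $X_a$, then solidity of the ideal $X_a$ together with $0 \leqslant h \leqslant C(h)$ would force $h \in X_a$, contrary to the choice of $h$. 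Hence $C(\abs{h}) = C(h) \notin X_a$, which by definition means $h \notin C(X_a) = (CX)_a$. Therefore $h \in CX \setminus (CX)_a$ and $CX$ is not order continuous, as required.

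The argument is short once the Proposition is in hand, and the only genuinely structural point to be careful about is that $CX$ is itself \emph{not} rearrangement invariant; this is precisely why one cannot work inside $CX$ directly but must transfer the problem back to $X_a$ and exploit its solidity, the whole mechanism being powered by the single monotonicity inequality $h^* \leqslant C(h^*)$.
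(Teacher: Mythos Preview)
Your proposal is correct and aligns with the paper's intended approach: the paper states this corollary as ``the immediate consequence'' of the identity $(CX)_a = C(X_a)$ and gives no further details, so what you have written is precisely the natural unpacking of that one-line remark. Your use of $h^* \leqslant C(h^*)$ together with the solidity of $X_a$ is exactly the mechanism that makes the non-trivial implication work, and the reduction to $h = h^*$ is justified by the fact (used elsewhere in the paper, via \cite[Lemma~2.6]{CKP14}) that $g \in X_a$ if and only if $g^* \in X_a$ for rearrangement invariant $X$.
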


\end{document}